\newcommand{\N}{\mathbb{N}}
\newcommand{\R}{\mathbb{R}}
\newcommand{\tot}{\textrm{total}}
\renewcommand{\vec}[1]{{\bf #1 } }
\newcommand{\citep}[1]{\cite{#1}}
\newcommand{\sys}[1]{\left \{ \begin{aligned} #1 \end{aligned} \right. }
\newcommand{\saut}{{\color{white} h} \\}
\newcommand{\ini}{\mathrm{in}}
\newcommand{\sw}{\mathrm{sw}}
\newcommand{\coo}[1]{\begin{pmatrix} #1 \end{pmatrix}}
\knowledgenewrobustcmd{\etab}{\cmdkl{\underline{\eta}}}
\knowledgenewrobustcmd{\vb}{\cmdkl{\underline{v}}}
\knowledgenewrobustcmd{\Vb}{\cmdkl{\overline{V}}}
\knowledgenewrobustcmd{\Ub}{\cmdkl{ { \bf \underline{U}}}}
\knowledgenewrobustcmd{\Pb}{\cmdkl{\underline{P}}}
\newcommand{\rhob}{\underline{\rho}}
\newcommand{\rhotot}{\rhob + \epsilon \delta \rho}
\knowledgenewrobustcmd{\E}{\cmdkl{\mathcal{E}}}
\knowledgenewrobustcmd{\Elow}{\cmdkl{\mathcal{E}_0}}
\knowledgenewrobustcmd{\Ediff}{\cmdkl{\mathcal{F}}}
\knowledgenewrobustcmd{\Edifflow}{\cmdkl{\mathcal{F}_0}}
\newcommand{\hb}{\underline{h}}
\knowledgenewrobustcmd{\A}{\cmdkl{A_{\mu}}(\partial \eta)}
\newcommand{\Mb}{\underline{\text{M}}}
\newcommand{\nablamu}{\nabla_{x,r}^{\mu}}
\knowledgenewrobustcmd{\jac}{\cmdkl{J}}
\knowledgenewcommand{\gradphi}[1][]{\cmdkl{\nabla}_{#1}^{\cmdkl{\varphi}}}
\knowledgenewcommand{\partialphi}[1]{\cmdkl{\partial}_{#1}^{\cmdkl{\varphi}}}
\knowledgenewcommand{\vort}{\cmdkl{\boldsymbol{\omega}_{\mu}}}
\newcommand{\point}[2]{{#1}^{#2}}
\newcommand{\pointal}[2]{{#1}^{(#2)}}
\newcommand{\pointgen}[1]{\dot{#1}}
\newcommand{\pointf}{\pointgen{\vec{f}}}
\newcommand{\taylorgen}{\underline{\taylor}}
\newcommand{\vortsw}{\omega_{\sw}}
\newcommand{\Vortsw}{\boldsymbol{\omega}_{\sw}}
\newcommand{\Vdiff}{\tilde{V}}
\newcommand{\wdiff}{\tilde{w}}
\newcommand{\etadiff}{\tilde{\eta}}
\newcommand{\Pdiff}{P}
\newcommand{\vortdiff}{\tilde{\boldsymbol{\omega}}_{\mu}}
\newcommand{\Vdiffs}{\tilde{V}^{(s)}}
\newcommand{\wdiffs}{\tilde{w}^{(s)}}
\newcommand{\etadiffs}{\tilde{\eta}^{(s)}}
\newcommand{\Pdiffs}{P^{(s)}}
\newcommand{\reg}[1]{\mathcal{J}_{\iota_{#1}}}
\knowledgenewrobustcmd{\commal}[3]{\cmdkl{[#1,#2]^{\mathrm{al}} #3 }}
\knowledgenewrobustcmd{\F}{\cmdkl{\vec{F}}}
\knowledgenewrobustcmd{\Fun}{\cmdkl{F_1}}
\knowledgenewrobustcmd{\Fdeux}{\cmdkl{f_2}}
\knowledgenewrobustcmd{\Ftrois}{\cmdkl{f_3}}
\knowledgenewrobustcmd{\Fquatre}{\cmdkl{f_4}}
\newcommand{\Fcinq}{\vec{F_5}}
\knowledgenewrobustcmd{\Rzero}{\cmdkl{r_0}}
\knowledgenewrobustcmd{\Run}{\cmdkl{R_1}}
\knowledgenewrobustcmd{\Rdeux}{\cmdkl{R_2}}
\knowledgenewrobustcmd{\Rdeuxetdemi}{\cmdkl{R_{6}}}
\knowledgenewrobustcmd{\Rtrois}{\cmdkl{R_3}}
\knowledgenewrobustcmd{\Rquatre}{\cmdkl{R_4}}
\knowledgenewrobustcmd{\Rcinq}{\cmdkl{R_5}}
\knowledgenewrobustcmd{\Rsix}{\cmdkl{\vec{R}_7}}
\newcommand{\diff}{\dot{\Lambda}^{s}}
\knowledgenewcommand{\taylor}{\cmdkl{\mathrm{a}}}
\newtheorem{theorem}{Theorem}
\newtheorem{proposition}{Proposition}[section]
\newtheorem{lemma}[proposition]{Lemma}
\newtheorem{corollary_alpha}[theorem]{Corollary}
\theoremstyle{definition}
\theoremstyle{remark}
\newtheorem{remark}[proposition]{Remark}
\numberwithin{equation}{section}
\newcounter{hyp}
\newenvironment{hyp}{\begin{equation} \stepcounter{hyp} \tag{H.\thehyp}}{\end{equation}}
   \def\MR#1{}
\begin{document}
\title[Free-surface Euler equations with density]{Free-surface Euler equations with density variations, and shallow-water limit}
\date{\today}
\author{Théo Fradin}
\address{Univ. Bordeaux, CNRS, Bordeaux INP, IMB, UMR 5251, F-33400 Talence, France }
\email{theo.fradin@math.u-bordeaux.fr}
\subjclass[2020]{35Q35,35F61,35R35,76B03}
\keywords{Euler equations, free-surface, topography, vorticity, density variations}
\begin{abstract}
In this paper we study the well-posedness in Sobolev spaces of the incompressible Euler equations in an infinite strip delimited from below by a non-flat bottom and from above by a free-surface. We allow the presence of vorticity and density variations, and in these regards the present system is an extension of the well-studied water waves equations. When the bottom is flat and with no density variations (but when the flow is not necessarily irrotational), our study provides an alternative proof of the already known large-time well-posedness results for the water waves equations.
Our main contribution is that we allow for the presence of density variations, while also keeping track of the dependency in the shallow water parameter. This allows us to justify the convergence from the free-surface Euler equations towards the non-linear shallow water equations in this setting. Using an already established large time existence result for these latter equations, we also prove the existence of the free-surface Euler equations on a logarithmic time-scale, in a suitable regime.
\end{abstract}
\maketitle
%\tableofcontents
\section{Introduction}
\subsection{Physical setting}
Let $\Omega_t$ be the domain of the fluid, delimited from below by an non-dimensional  non-flat bottom $-1 + \beta b$ where $b : \R^d \to \R$ is a smooth function such that $-1 + \beta b$ is negative and bounded from below, and $\beta \geq 0$. Here, $d \in \{1,2\}$ denotes the horizontal dimension. The domain of the fluid is also delimited from above by a free surface $\epsilon \eta_0$ of size $\epsilon$ not necessarily small, namely
\begin{equation}
\label{eqn:def:Omegat}\Omega_t := \{(x,z) \in \R^d \times \R, -1+\beta b(x) < z < \epsilon \eta_0 \}.
\end{equation}
We denote by $\epsilon\vec{U} := (\epsilon V,\epsilon w)^T$ the total velocity of the fluid, with $V$ its horizontal component and $w$ its vertical one, both of size $\epsilon$. We consider a fluid with a density given by $\rho_{\tot}(t,x,z) := \rhob + \epsilon \delta \rho(t,x,z)$ with $\rhob \in \R^+$ a constant reference density and $\delta \geq 0$. Although the size of the velocity and free-surface perturbations is $\epsilon$, we consider a density perturbation of size $\epsilon \delta $. Both $\epsilon$ and $\delta$ can be taken large in our main result (see however Remark~\ref{rk:ctes}). The incompressible Euler equations are completed with the mass conservation equation, and in non-dimensional form read
\begin{equation}
\label{eqn:euler_euleriennes}
\sys{ \partial_t V + \epsilon \vec{U} \cdot \nabla_{x,z} V + \frac{1}{\rhob + \epsilon \delta \rho}\nabla P + \frac{g\rhob}{\rhob + \epsilon \delta \rho}  \nabla \eta_0 &= 0, \\
		\mu \left( \partial_t w + \epsilon \vec{U} \cdot \nabla_{x,z} w \right) + \frac{1}{\rhob + \epsilon \delta \rho}\partial_z P + g \delta \frac{\rho}{\rhob + \epsilon \delta \rho}&= 0, \\
	\partial_t \rho + \epsilon \vec{U}\cdot \nabla_{x,z} \rho &=
		0,\\
		\nabla \cdot V + \partial_z w &= 0, } \qquad \text{ in } \Omega_t;
\end{equation}
here, $P$ denotes the perturbation of the hydrostatic pressure at equilibrium, given by $g \rhob(\epsilon \eta_0-z)$. Namely, if $P_{\tot}$ denotes the total pressure of the fluid, $P_{\tot} = g(z-\epsilon \eta_0) + \epsilon P$. See Subsection~\ref{subsection:pression} for the precise definition of $P$ from the unknowns $V,w,\rho$ and $\eta_0$. The parameter $\mu \in (0,1]$ is the shallow water parameter, and is defined as the square of the ratio between characteristic vertical and horizontal scales. In many applications, it is small with respect to $1$, therefore in this paper we first conduct a study on~\eqref{eqn:euler_euleriennes} that holds uniformly in $\mu$, so that we can then examine the limit $\mu \to 0$; see for instance~\cite{Vallis2017,Duchene2022a,Lannes20} for an account on numerous shallow water models, widely used to study oceanic flows.\\
%The parameter $\epsilon \in [0,1]$ denotes the size of the unknowns $V,w,\eta_0$, and is also to be thought of as small. See for instance~\cite{Lannes2013} for a study on many asymptotic regimes that appear in oceanography.
The set of equations~\eqref{eqn:euler_euleriennes} is completed with boundary conditions
\begin{equation}
\label{eqn:euler_euleriennes:bc}
\sys{ P_{|z=\epsilon \eta_0} &= 0, \\
	\partial_t \eta_0 + \epsilon V_{|z=\epsilon \eta_0} \cdot \nabla \eta_0 - w_{|z=\epsilon \eta_0} &= 0, \\
	\vec{N_b} \cdot \vec{U}_{|z=-1+\beta b} &= 0,}\qquad \text{in } [0,T]\times \R^d,
\end{equation}
where $\vec{N_b} := ( - \beta \nabla^T b, 1)^T$ is the (upward) normal direction at the bottom. The first boundary condition in~\eqref{eqn:euler_euleriennes:bc} is the continuity of the pressure at the surface with the atmospheric pressure, assumed to be constant and renormalized at $0$. The second boundary condition is the kinematic equation on the free-surface and the last one is the impermeability of the bottom.\\
In view of studying the initial value problem associated with these equations, we add the following initial data, that must satisfy the incompressibility condition in~\eqref{eqn:euler_euleriennes} and the first and third boundary conditions in~\eqref{eqn:euler_euleriennes:bc}:
\begin{equation}
\label{eqn:euler_euleriennes:ci}
\sys{\vec{U}_{|t=0} &= \vec{U}_{\ini} \qquad & \text{in } \Omega_t,\\
\rho_{|t=0} &= \rho_{\ini} \qquad & \text{in } \Omega_t,\\
(\eta_0)_{|t=0} &= (\eta_0)_{\ini} \qquad & \text{in } \R^d. }
\end{equation}

The aim of this paper is to study the initial boundary value problem on the system~\eqref{eqn:euler_euleriennes} with boundary conditions~\eqref{eqn:euler_euleriennes:bc} and initial data~\eqref{eqn:euler_euleriennes:ci} and obtain results that are uniform in $\mu$ (see the statement of Theorem~\ref{thm:euler_phi} in Subsection~\ref{subsection:first_result} below). This result then allows us to justify the persistence of the non-linear shallow water equation~\eqref{eqn:nlsw} in this setting, see the statement of Theorem~\ref{thm:convergence:svt} in Subsection~\ref{subsection:second_result}.\\

Let us now define the non-dimensionalized vorticity
$$ \vort := \begin{pmatrix} \frac{1}{\sqrt{\mu}} \partial_z V^{\perp} - \sqrt{\mu} \nabla^{\perp} w \\ \nabla^{\perp} \cdot V\end{pmatrix}.$$
Taking the $(d+1)$-dimensional curl of the equations on $V$ and $w$ in~\eqref{eqn:euler_euleriennes} and dividing the first component by $\sqrt{\mu}$, we get 
\begin{equation}
\label{eqn:euler_euleriennes:vort}
\partial_t \vort + \epsilon \vec{U} \cdot \nabla_{x,z} \vort + \epsilon\vort^{[x]} \cdot \nabla \begin{pmatrix} V \\ \sqrt{\mu} w \end{pmatrix} + \epsilon \vort^{[z]} \partial_z \begin{pmatrix} \frac{1}{\sqrt{\mu}} V \\ w \end{pmatrix} = \frac{\delta}{\sqrt{\mu}} \vec{F} \qquad \text{in } \Omega_t.
\end{equation}
Here, $\vort^{[x]}$ and $\vort^{[z]}$ denote the horizontal and vertical components of the vorticity $\vort$. When $d=1$,~\eqref{eqn:euler_euleriennes:vort} classically boils down to a scalar transport equation, although we do not use this particular structure in this study so that our result holds both when $d=1$ and $d=2$. The source term $\vec{F}$ stems from the density variations, namely\\
\begin{equation}
\label{eqn:def:vort:source}
\vec{F}:= \begin{pmatrix} \nabla^{\perp}\left( \frac{\rho}{\rhob + \epsilon \delta \rho} \right) + \epsilon \partial_z \left( \frac{\rho}{\rhob + \epsilon \delta \rho} \right) \nabla^{\perp} \left( P - g \eta_0 \right) -  \epsilon \nabla_{x}^{\perp} \left( \frac{\rho}{\rhob + \epsilon \rho } \right) \partial_z P\\
\epsilon \sqrt{\mu} \nabla^{\perp} \left( \frac{\rho}{\rhob + \epsilon \delta \rho} \right) \cdot \nabla (P - g \eta_0)
\end{pmatrix}.
\end{equation}
\begin{figure}
\includegraphics[scale=1]{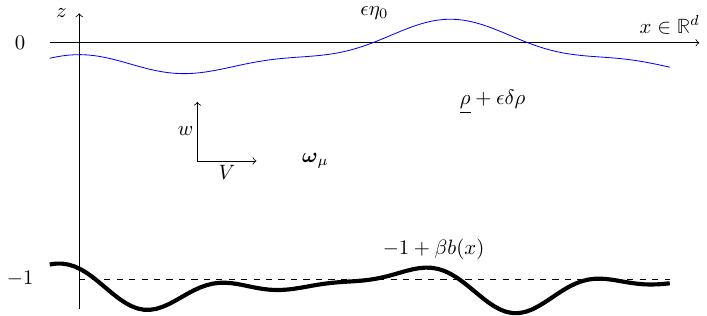}
\caption{Setting in Eulerian coordinates.}\label{fig:eul}
\end{figure}Note that if there are no density variations ($\delta = 0$) and if the initial vorticity is zero,~\eqref{eqn:euler_euleriennes:vort} yields that the vorticity remains zero. In this so-called irrotational case, see the pioneer works~\cite{Wu1997},~\cite{Lannes05},~\cite{AlazardBurqZuily2014}. In this setting,~\eqref{eqn:euler_euleriennes} can be reduced to a system of equations on the evolution of the free surface $\eta_0$ and a trace of the velocity potential at the surface, the Zakharov/Craig-Sulem formulation of the water waves equations, see~\cite{Lannes2013} for a comprehensive study of the derivation and well-posedness of the water waves equations as well as several associated asymptotic regimes.\\
In the presence of vorticity but without density variations, see the works~\cite{Kukavica2018} and~\cite{WangZhang2015}, where the authors study the free-surface Euler equations with minimal regularity assumptions on the initial data, although without tracking the dependency in $\mu$. The author in~\cite{Depoyferre19} derives energy estimates for the Euler equations~\eqref{eqn:euler_euleriennes} with a non-flat bottom. He even treats the case of emerging bottom, that is enabling $1 - \beta b(x) + \epsilon \eta_0 < 0$ for some $x \in \R^d$, improving the results in~\cite{ShatahZeng08a,ShatahZeng08,ShatahZeng11}. See also~\cite{ZhangZhang2008}. However, these works do not keep track of the dependency of their estimates with respect to $\mu$. \\
In~\cite{Castro2014a}, the authors derive a generalization of the Zakharov/Craig-Sulem formulation in the presence of vorticity variations, alongside an equation (namely,~\eqref{eqn:euler_euleriennes:vort} with $\delta = 0$) on the evolution of the vorticity, in the flat-bottom case. This formulation is extended to the case of a non-flat bottom and with Coriolis forcing in~\cite{Melinand2017}, where the energy estimates on the subsequent system are performed. These two results keep track of the dependency in $\mu$, and even perform the shallow water limit $\mu \to 0$. See also the survey~\cite{Lannes20}. In~\cite{MasmoudiRoussetSun2023}, the incompressible limit for the compressible free-surface Navier-Stokes equations is considered. The authors in~\cite{MasmoudiRousset2012} study the free-surface Navier-Stokes equations (that is, adding viscosity in~\eqref{eqn:euler_euleriennes}), with no bottom boundary. Although the presence of viscosity changes several parts of the analysis, the limit of vanishing viscosity is studied. 
%In this study however that we do not use the generalization of the Zakharov/Craig-Sulem formulation of~\cite{Castro2014a}, which substantially changes the analysis.\\

In the presence of density variations, several studies have been conducted in the whole space~\cite{Danchin2010} or in more general domains~\cite{Shigeharu1999}. In the case of a strip with a density stratification, with or without a free-surface, see~\cite{Desjardins2019},~\cite{Duchene2022} and~\cite{Fradin2024} for the well-posedness theory. To the best of our knowledge, the present study is the first on the free-surface case with a non-flat bottom and density variations, keeping track of the dependency of the shallow water parameter $\mu$. To the best of our knowledge, it is also the first result on the persistence of the non-linear shallow water equations as an approximate model for the free-surface Euler equations with weak density variations.\\

Our motivation for performing the analysis directly on the free-surface Euler equations rather than on a Zakharov/Craig-Sulem type formulation as in ~\cite{Castro2014a} is three-fold. First, and most importantly, extending the Zakharov/Craig-Sulem formulation from~\cite{Castro2014a} in the presence of density variations is not straightforward. Then, we believe that some parts of the analysis are more standard, namely the scheme of existence (see Subsection~\ref{subsection:scheme}) as well as the quasilinear structure (see Subsections~\ref{subsection:quasilin} and~\ref{subsection:quasilin:gen}). Eventually, this allows us to compare the analysis with the one performed on similar systems: namely the Navier-Stokes equations (see for instance~\cite{MasmoudiRousset2012}) or the stratified Euler equations (see~\cite{Desjardins2019},~\cite{Duchene2022} and~\cite{Fradin2024}). 
\subsection{First result: well-posedness of~\eqref{eqn:euler_euleriennes}}
\label{subsection:first_result}
The main result of this paper is a local well-posedness result on the system~\eqref{eqn:euler_euleriennes}, stated in Theorem~\ref{thm:euler_phi}. We state here a shorter version, for the sake of clarity. Let us first state the main assumptions sufficient for the following theorem to hold. Let $d \in \{1,2\}$ be the horizontal dimension, $s_0\in \N$ with $s_0>\frac{d+1}{2}$, and $s \in \N$ with $s \geq s_0+2$ be the regularity of the solution considered. Let $M> 0$, we assume
\begin{hyp}
\label{hyp:b}
\Vert b \Vert_{H^{s+\frac32}} \leq M.
\end{hyp}Let $M_{\ini} >0$ and $(V_{\ini},w_{\ini}, \rho_{\ini}, (\eta_0)_{\ini}) \in (H^{s})^{d+2}(\Omega_0) \times H^s(\R^d)$ satisfying the following assumptions. First, we make the non-cavitation assumption, namely
\begin{hyp}
\label{hyp:density}
\rhob + \epsilon \delta \rho_{\ini} \geq c_* > 0.
\end{hyp}We also assume that the depth of the fluid is bounded from below
\begin{hyp}
\label{hyp:non_cavitation:euleriennes}
1 - \beta b + \epsilon (\eta_0)_{\ini} \geq c_*,
\end{hyp}for some $c_* > 0$, as well as the positivity of the Rayleigh-Taylor coefficient
\begin{hyp}
\label{hyp:taylor:intro}
\taylor \geq c_* > 0,
\end{hyp}defined as
\begin{equation}
\label{eqn:taylor:def:intro}
\taylor := g\rhob - \epsilon (\partial_z P)_{|z=\epsilon \eta_0}.
\end{equation}Here, $P$ is defined from the initial data through~\eqref{eqn:pression}.
These assumptions are classical in this setting, see for instance~\cite{Castro2014a}. Note however the results in~\cite{Depoyferre19} which hold without~\eqref{hyp:non_cavitation:euleriennes}, and a discussion about~\eqref{hyp:taylor:intro} for the Navier-Stokes equation in the vanishing viscosity limit in~\cite{MasmoudiRousset2012}.\\

We furthermore assume
\begin{hyp}
\label{hyp:Mini:euleriennes}
\Vert(V_{\ini},\sqrt{\mu} w_{\ini},\sqrt{\mu}\rho_{\ini})\Vert_{H^{s}(\Omega_0)} + \vert (\eta_0)_{\ini}\vert_{H^{s}(\R^d)} \leq M_{\ini};
\end{hyp}the Sobolev spaces $H^s(\R^d)$ as well as $H^s(\Omega_t)$ are defined in~\eqref{eqn:def:sobolev} and~\eqref{eqn:def:Hsomegat}. We also assume that the initial (non-dimensional) vorticity $\vort_{\ini}$ defined from $(V_{\ini},w_{\ini})$ through~\eqref{eqn:vort:def} satisfies
\begin{hyp}
\label{hyp:vort:ini}
\Vert \vort_{\ini} \Vert_{H^{s}(\Omega_0)} \leq M_{\ini}.
\end{hyp}%We assume that following regime, namely that the density variations are small with respect to the shallow water parameter
%\begin{hyp}
%\label{hyp:delta}
%\delta \leq \mu.
%\end{hyp}
Recall the notation $a \vee b := \max(a,b)$. We can now make a rough statement of Theorem~\ref{thm:euler_phi}.
\theoremstyle{plain}
\newtheorem*{thm_temp}{Theorem~\ref{thm:euler_phi} {\it (Rough statement)}}
\begin{thm_temp}
\label{cor:ew:euleriennes}
Let $\mu\in (0,1],$ $\epsilon,\beta,\delta \in [0,1]$. Let $(V_{\ini},w_{\ini}, \rho_{\ini}(\eta_0)_{\ini}) \in (H^{s})^{d+2}(\Omega_0) \times H^s(\R^d)$ satisfying~\eqref{hyp:b},~\eqref{hyp:density},~\eqref{hyp:non_cavitation:euleriennes},~\eqref{hyp:taylor:intro},~\eqref{hyp:Mini:euleriennes},~\eqref{hyp:vort:ini}, the first and third boundary conditions in~\eqref{eqn:euler_euleriennes:bc} and the incompressibility condition in~\eqref{eqn:euler_euleriennes}. Then there exists $T> 0$ independent of $\beta$,$\epsilon$,$\mu$ and $\delta$ such that there exists a unique solution $(V,w,\rho,\eta_0) \in C^{0}([0,\frac{T}{\epsilon \vee \beta\vee\frac{\delta}{\mu}}],H^{s}(\Omega_t)^{d+2} \times H^s(\R^d))$ to~\eqref{eqn:euler_euleriennes} satisfying the boundary conditions~\eqref{eqn:euler_euleriennes:bc} and the initial condition~\eqref{eqn:euler_euleriennes:ci}.
\end{thm_temp}
\begin{remark}
\label{rk:ctes}
The constant $T$ in the previous theorem is independent of $\beta,\epsilon,\mu,\delta$. However, it may depend on the constants defined in the assumptions, namely $M,c_*,M_{\ini}$, as well as $d,s_0$ and $s$.\\
Note that the time of existence in the previous theorem can be made independent of $\mu$ by assuming small density variations, i.e. $\delta \leq \mu$.
\end{remark}
\begin{remark}
\label{rk:reg}
The minimal Sobolev regularity for Theorem~\ref{thm:euler_phi} to hold is $s_0+2$. In the case of the Euler equations without a free boundary, the minimal regularity obtained from the energy method is $s_0+1$. In the present study, the non-linear interactions between the free surface and the other unknowns yield the minimal regularity $s_0+2$. See however the works~\cite{ShatahZeng08a},~\cite{ShatahZeng08},~\cite{ShatahZeng11},~\cite{WangZhang2015},~\cite{Kukavica2018}, ~\cite{Depoyferre19} that manage to lower the minimal regularity, though with a time of existence that is not independent of the shallow-water parameter $\mu$.
\end{remark}
\begin{remark}
\label{rk:columnar}
In the irrotational case $\vort = 0$, in the shallow-water limit $\mu = 0$, one gets the columnar motion property $\partial_z V=0$. Then, the non-linear shallow water equations are an approximate model of the water waves equations, with precision $O(\mu)$, see~\cite{Lannes2013} and references within.\\
The setting in which the initial non-dimensional vorticity is of size $1$ (see~\eqref{hyp:vort:ini}) is the setting studied in~\cite{Castro2014a} and~\cite{Melinand2017}. It roughly states that $\partial_z V \sim \sqrt{\mu}$, and as such, it is a relaxation on the columnar motion assumption $\partial_z V = 0$ in the shallow water setting (i.e. $\mu = 0$). We use the vorticity equation~\eqref{eqn:euler_euleriennes:vort} in this study to show that the assumption~\eqref{hyp:vort:ini} is propagated for for positive times, at least in the regime $\delta \leq \mu$ of small density variations, and on the time-scale considered in Theorem~\ref{thm:euler_phi}. This point is strongly used to control some terms in the analysis: namely, the vertical advection term in~\eqref{eqn:euler_euleriennes} is
\begin{equation}
\label{eqn:vertical_advection}
 \epsilon \sqrt{\mu} w \frac{1}{\sqrt{\mu}} \partial_z V,
 \end{equation}
where we have a uniform control on $\sqrt{\mu} w$, but not on $w$ (see~\eqref{eqn:energy:def}). The singularity $1/\sqrt{\mu}$ that appears is dealt with in the following analysis by using $\partial_z V \sim \sqrt{\mu}$ from~\eqref{hyp:vort:ini}, see more precisely the proof of Proposition~\ref{prop:restes}.\\
In the setting $\vort_{\ini} = O(1)$, the non-linear shallow water equations are an approximate model of the free-surface Euler equations with precision $O(\sqrt{\mu})$. This slower rate of convergence was already observed in~\cite{Castro2014a} and is due to the presence of vorticity. Theorem~\ref{thm:convergence:svt} shows that it extends to the case of small density variations.\\
In the context of stratified fluids (that is, when the reference density $\rhob$ is not assumed to be constant but strictly decreasing with respect to $z$), the equation on the vorticity does not appear to help propagate the condition $\partial_z V \sim \sqrt{\mu}$ from~\eqref{hyp:vort:ini}. This is because the density variations at equilibrium cannot be taken small with respect to $\mu$, as opposed to the present case with $\delta \leq \mu$. Because the condition $\partial_z V \sim \sqrt{\mu}$ cannot be propagated in the stratified case,~\cite{Desjardins2019} and~\cite{Fradin2024} do not assume this condition to hold, and the singularity in~\eqref{eqn:vertical_advection} is dealt with the additional assumption $\epsilon \leq \sqrt{\mu}$. Although the present analysis and the one for the stratified case share a lot of similarities, this is a major discrepancy.
\end{remark}
\begin{remark}
Solving the water waves equations in the Zakharov/Craig-Sulem formulation is an initial value problem on the horizontal space $\R^d$ (see~\cite{Lannes2013} and references within). On the other hand, both the present study on~\eqref{eqn:euler_euleriennes} or~\cite{Castro2014a} on the free-surface euler equations in the Zakharov/Craig-Sulem formulation treat initial boundary value problems, in the free-surface domain $\Omega_t$, as the evolution of the vorticity does not reduce to an equation defined on the free surface. The approximation scheme to construct a solution is then more involved, see for instance~\cite{BenzoniGavage2006} for an global picture on hyperbolic initial boundary value problems. The strategy used in~\cite{Castro2014a} relies on a suitable vertical regularization, despite the boundaries. The proof of Theorem~\ref{thm:euler_phi} is slightly different, as we use semi-Lagrangian coordinates to write a reformulation of the Euler equations without vertical advection, see~\eqref{eqn:euler:slag}. Both strategies are adapted to the case where the flow is tangent to the boundaries.  
\end{remark}
\begin{remark}
In the absence of topographic variations ($\beta = 0$) and no density variations ($\delta = 0$), the time of existence in Theorem~\ref{thm:euler_phi} becomes $\frac{T}{\epsilon}$, which is sometimes called a large time, because it grows as the size $\epsilon$ of the deviation from the equilibrium decreases. With large topographic variations, the same time of existence $\frac{T}{\epsilon \vee \beta}$ as in Theorem~\ref{thm:euler_phi} holds for the water waves equations, see for instance~\cite{Lannes2013}. Whether a large-time existence result (i.e. on $[0,\frac{T}{\epsilon}]$) holds in the presence of large topographic variations is an open question, both for the water waves equations or for the present system~\eqref{eqn:euler_euleriennes}, see also Remark~\ref{rk:tps_long:vort}. However, in the case of the water waves equations with surface tension and large topographic variations, the large time existence holds, see~\cite{Mesognon17}. Such a large time existence result also holds for the non-linear shallow water equations with large topographic variations, see~\cite{BreschMetivier10}.
\end{remark}
\subsection{Second result: shallow water limit}
\label{subsection:second_result}
As the time of existence in Theorem~\ref{thm:euler_phi} is bounded from below uniformly with respect to $\mu$ in the case of small density variations ($\delta \leq \mu$), it defines a family of solutions $(V,w,\rho,\eta_0)_{\mu>0}$ in $C^{0}([0,\frac{T}{\epsilon \vee \beta\vee \frac{\delta}{\mu}}[,H^{s}(\Omega_t)^{d+2} \times H^s(\R^d))$, parameterized by $\mu > 0$. A natural question is then to study its limit as $\mu \to 0$. The candidate for the limit is the solution of the  non-linear shallow water (or Saint-Venant) equations, that read
\begin{equation}
\label{eqn:nlsw}
\sys{\partial_t \eta_{\sw} + \nabla \cdot \left( (1 - \beta b + \epsilon \eta_{\sw}) V_{\sw} \right) & = 0,\\
	\partial_t V_{\sw} + \epsilon V_{\sw} \cdot \nabla V_{\sw} + g\nabla \eta_{\sw} &=0, \\
	} \qquad \text{ in } \R^d;
\end{equation} 
here, $V_{\sw}$ actually denotes the vertically averaged horizontal velocity, and $\eta_{\sw}$ is the function whose graph is the free-surface. In particular,~\eqref{eqn:nlsw} does not take into account density variations. See~\cite{Lannes2013} and~\cite{Duchene2022a} and references within for an account on this system, and~\cite{Lannes20} where the vorticity is considered.  We now make a rough statement of the convergence result, as the precise statement of Theorem~\ref{thm:convergence:svt} requires the introduction of some notations.
\theoremstyle{plain}
\newtheorem*{thm_temp2}{Theorem~\ref{thm:convergence:svt}{\it (Rough statement)}}
\begin{thm_temp2}
\label{cor:cvce:rough}
If the solution $(V,w,\rho,\eta_0)$ of~\eqref{eqn:euler_euleriennes} for $\mu > 0$ given by Theorem~\ref{thm:euler_phi} and the solution $(V_{\sw},\eta_{\sw})$ are close initially (more precisely, the norm of their difference in some functional space is small with respect to $\mu$; see~\eqref{hyp:E0}), then they remain close on the time interval $[0,\frac{T}{\epsilon \vee \beta\vee\frac{\delta}{\mu}}]$ given by Theorem~\ref{thm:euler_phi}.
\end{thm_temp2}
To the best of our knowledge, this is the first justification of the non-linear shallow water equations, in the presence of topography and density variations, from the free-surface Euler equations.
\begin{remark}
The convergence from the water waves equation towards the non-linear shallow water equations was already established in the irrotational case with topography (see~\cite{Lannes2013} and references within), in the flat-bottom case with vorticity (see~\cite{Castro2014a}) and in the case with vorticity and topographic variations (as well as Coriolis forcing) in~\cite{Melinand2017}.
\end{remark}
\begin{remark}
The present result proves that the non-linear shallow water equations~\eqref{eqn:nlsw} are an approximate model for the free-surface Euler equations~\eqref{eqn:euler_euleriennes} with precision $O(\sqrt{\mu})$; see Remark~\ref{rk:columnar} for a note on the precision. It takes place in the regime where the initial non-dimensional vorticity is of size $O(1)$, which can be seen as a relaxation of the columnar motion property ($\partial_z V \neq 0$). It would be of interest to prove a well-posedness result in the same fashion as Theorem~\ref{thm:euler_phi} outside of the case $\partial_z V \sim \sqrt{\mu}$. Then the solution of~\eqref{eqn:euler_euleriennes} is expected to be described at first order in $\mu$ by the solution of the hydrostatic Euler equations (see for instance~\cite{Masmoudi2012} for the flat-surface, flat-bottom case in dimension 2, and~\cite{Lannes20} for a different viewpoint).
%Note that this result is valid for a flow that is columnar at first order in $\mu$, see Remark~\ref{rk:columnar}.
% Thus, the limit performed here not only contains the shallow water limit, but also that the flow at the limit $\mu = 0$ is columnar, i.e. independent of $z$.  
\end{remark}
\begin{remark}
\label{rk:tps_long}
With large topographic variations ($\beta = 1$) and small density variations ($\delta \leq \mu$), Theorem~\ref{thm:euler_phi} only proves the existence of the solution to~\eqref{eqn:euler_euleriennes} on $[0,T]$, and on this time interval, it remains close to the solution of the non-linear shallow water system~\eqref{eqn:nlsw}. However, it was shown in~\cite{BreschMetivier10} that, despite large topographic variations, the solution of~\eqref{eqn:nlsw} exists on the time interval $[0,\frac{T}{\epsilon}]$. This might suggest that it is possible to prove the large time existence (i.e. on a time interval $[0,\frac{T}{\epsilon}]$) of the solution of the water waves equations and of~\eqref{eqn:euler_euleriennes}. Nonetheless, these questions remain, to the best of our knowledge, open.
\end{remark}
Set $\beta = 1$ and $\delta = 0$ for the sake of clarity. As stated in Remark~\ref{rk:tps_long}, the existence time for the solution of~\eqref{eqn:euler_euleriennes} is $T$, whereas it is $\frac{T}{\epsilon}$ for~\eqref{eqn:nlsw} according to~\cite{BreschMetivier10}, and thus is much larger in the shallow water case. We use the latter result in subsection~\ref{subsection:cheating} to improve the former, in the regime $\mu \leq \epsilon$ and with well-prepared data, to get an existence time of order $\log(\frac{1}{\epsilon})$; we give a rough statement of this result, that is a corollary of Theorem~\ref{thm:convergence:svt}. See Corollary~\ref{cor:tps_long} for the precise statement.
\newtheorem*{thm_temp3}{Corollary~\ref{cor:tps_long}{\it (Rough statement)}}
\begin{thm_temp3}
\label{thm:intro:tps_long}
Under the assumptions of Theorem~\ref{thm:euler_phi} and moreover if the initial data~\eqref{eqn:euler_euleriennes:ci} is close (with respect to $\mu$) to an admissible initial data for the non-linear shallow water equations~\eqref{eqn:nlsw} and in the regime $\mu \leq \epsilon$, then the solution $(V,w,\rho,\eta_0)$ of~\eqref{eqn:euler_euleriennes} given by Theorem~\ref{thm:euler_phi} can be extended on the time interval $[0,T \log(\frac{1}{\epsilon})]$, where $T > 0$ is a constant that is independent of $\epsilon,\mu$.
\end{thm_temp3}
The time of existence in Corollary~\ref{cor:tps_long} is much smaller than the time $\frac{T}{\epsilon}$ from~\cite{BreschMetivier10} for the non-linear shallow water equations, although it can also be made arbitrarily large by taking $\epsilon$ (and thus, $\mu$) small enough. Such a logarithmic time appears in semi-classical analysis and where it is called the Ehrenfest time~\cite{Shepelyansky2020,CarlesFermanian2011}. See also~\cite{LannesRauch2000}, in the context of non-linear geometric optics.
\subsection{Plan of the paper}
In Section~\ref{section:change_variables}, we first define the change of variables that we use throughout the paper to fix the fluid domain to the strip. We then state some results on Alinhac's good unknown, used to handle the non-linearity coming from the change of variables.\\
In Section~\ref{section:estimates}, we first show how to define the pressure from the other unknowns, and derive the estimates on the pressure in Proposition~\ref{prop:pression}. We then exhibit the quasilinear structure of the Euler equations, and use them to derive high-order energy estimates in Proposition~\ref{prop:energy}. We conclude this section with the construction of the solution of the Euler equations, using semi-Lagrangian coordinates, in the proof of Theorem~\ref{thm:euler_phi}.\\
In Section~\ref{section:cvce}, we first study the convergence from the solution of the Euler equations towards the solution of the non-linear shallow water equations in Theorem~\ref{thm:convergence:svt}. We then show how to use the large-time existence result from~\cite{BreschMetivier10} to improve slightly the existence result of Theorem~\ref{thm:euler_phi}, under some assumptions, in Corollary~\ref{cor:tps_long}.\\
Appendix ~\ref{appendix:A} is concerned with technical tools such as classical estimates in Sobolev spaces. 
\subsection{Notations}
\label{subsection:notations}
\begin{itemize}
\item In this paper (see for instance Theorems~\ref{thm:euler_phi},~\ref{thm:convergence:svt} or Propositions~\ref{prop:pression},~\ref{prop:energy}) the constant $C$ or the time $T$ denote constants which may depend implicitly on the other constants $s_0,s,d$ (see for instance the introduction of subsection~\ref{subsection:first_result}), $c_*,h_*$ (see Assumptions~\eqref{hyp:density},\eqref{hyp:non_cavitation:euleriennes},~\eqref{hyp:taylor:intro}). When they also depend on another constant, it will be explicitly stated (see for instance Proposition~\ref{prop:pression}, "C depends only on M", the use of the word "only" thus being a slight abuse of notation). However, it is crucial in our analysis that they do not depend on the parameters $\beta,\epsilon\mu,\delta$, nor on the initial data, other than through the aforementioned constants.
\item Bold letters denote vectors of dimension $d+1$, such as $\vec{g}$. 
\item If $(a,b) \in \R^2$, then $a\vee b$ denotes the maximum between $a$ and $b$, and $a \wedge b$ denotes the minimum.
\item For $s \in \R$, $|D|$ is the Fourier multiplier of symbol $|\xi|$, $\Lambda^s$ is the Fourier multiplier of symbol $(1 + |\xi|^2)^{\frac{s}{2}}$, and $\dot{\Lambda}^{s} := |D| \Lambda^{s-1}$.
\item If $f,g$ are smooth enough functions and $\sigma(D)$ is a Fourier multiplier (such as $\Lambda^{s}$ or $\dot{\Lambda}^{s}$ for $s \in \R)$, we define the commutator $[\sigma(D),f]g := \sigma(D)(fg)-f\sigma(D) g$ and the symmetric commutator $[\sigma(D);f,g] := \sigma(D)(fg) - f \sigma(D)(g) - \sigma(D)g$. The commutator coming taking into account Alinhac's good unknown $\commal{\sigma(D)}{f}{g}$ used in~\eqref{apdx:alinhac:comm1} is defined as
\begin{equation}
\label{apdx:alinhac:comm:def}
\commal{\dot{\Lambda}^{s}}{\gradphi[t,x,r]}{f} :=   \dot{\Lambda}^{s} (-\beta b + \epsilon \eta_0) \partial_r^{\varphi}(\gradphi f) + (\partial \varphi)^{-T} [\dot{\Lambda}^{s} ; (\partial \varphi)^T, \gradphi[t,x,r] f].
\end{equation}
\item We use the notation $\coo{v_1\\v_2}^{\perp} := \coo{-v_2 \\ v_1}$.
\item For $p \in [1,\infty]$, we use the standard $L^p-$based Sobolev spaces
\begin{equation}
\label{eqn:def:sobolev}
W^{s,p}(\R^d) := \{ \phi  \in L^p(\R^d), |\phi |_{W^{s,p}} < + \infty \}, 
\end{equation}
with the norm
$$|\phi |_{W^{s,p}} := |\Lambda^s \phi  |_{L^p}.$$
We denote with simple bars $|\cdot |$ norms in dimensions $1$ or $d$. \\
We write 
$$H^s := W^{s,2}.$$ \\
We use the functional spaces, for $s \in \R_+$, $0 \leq k \leq s$,$p \in [1,\infty]$:
$$ W^{s,k,p}(S) := \{\phi  \in L^p(S), \Vert \phi  \Vert_{W^{s,k,p}} < \infty \}, $$
where we define the norm 
$$\Vert \phi  \Vert_{W^{s,k,p}} :=  \sum\limits_{l = 0}^{k} \Vert \Lambda^{s-l} \partial_{r}^l \phi  \Vert_{L^p}, $$
and write $H^{s,k} := W^{s,k,2}$. 
In the case $s=k$, we write $H^s(S_r) := H^{s,k}(S_r)$. In the case $k=0$, this yields 
$$ \Vert f \Vert_{H^{s,0}(S)} := \Vert \Lambda^s f\Vert_{L^2(S)}.$$
We only use $L^2$ and $L^{\infty}$ based Sobolev spaces.
\item We say that $(V,w,\eta_0) \in C^{0}([0,T[,H^{s}(\Omega_t)^{d+1} \times H^s(\R^d))$ if $\eta_0 \in C^0([0,T[,H^s(\R^d))$ and, for $\varphi$ the diffeomorphism defined from $\eta_0$ through~\eqref{eqn:phi:def} and~\eqref{eqn:phi:def:bary}:
\begin{equation}
\label{eqn:def:Hsomegat}
(V,w)\circ \varphi \in C^0([0,T[,H^{s}(S)^{d+1}).
\end{equation}
\end{itemize}

\section{Straightening the fluid domain}
\label{section:change_variables}
\subsection{The change of variables}
In order to study the well-posedness of~\eqref{eqn:euler_euleriennes},  we use the following change of variables, whose inverse maps the time-dependent domain $\Omega_t$ to the strip $S := \R^d \times [-1,0] $:
\begin{equation}
\label{eqn:phi:def}
 \begin{aligned} \varphi : S &\to \Omega_t \\  (x,r) &\mapsto (x,\etab(x,r) + \epsilon \eta(t,x,r)) \end{aligned}
\end{equation}
where $\etab$ and $\etab + \epsilon \eta$ are strictly increasing with respect to $r$. For most of this study, we will use the following, standard change of variables:
\begin{equation}
\label{eqn:phi:def:bary}
\etab  := r (1 - \beta b) \qquad   \eta :=  (1+r) \eta_0.
\end{equation}
It is only in Subsection~\ref{subsection:scheme} that we will use other coordinates (i.e. other definitions for $\etab$ and $\eta$), that will be defined subsequently. We also denote by $\hb, h$ the derivatives with respect to $r$ of $\etab, \eta$ respectively. Both may depend on $r$ a priori, although in the case of the change of variables~\eqref{eqn:phi:def:bary}, we have
\begin{equation}
\label{eqn:h:def}
\hb + \epsilon h = 1 - \beta b + \epsilon \eta_0.
\end{equation}
The assumption~\eqref{hyp:non_cavitation:euleriennes} of non-vanishing (and bounded from above) fluid height writes, in these coordinates, that there exists $h_*,h^* > 0$ such that
\begin{hyp}
\label{hyp:H}
h_* \leq \hb + \epsilon h \leq h^*.
\end{hyp}The regularity assumptions~\eqref{hyp:b} on $b$ is replaced
\begin{hyp}
\label{hyp:Hb:reg}
\Vert \hb -1 \Vert_{H^{s+2}(S)} \leq \beta \Mb, 
\end{hyp}for $\Mb$ some constant, and $s \in \N$. In the case of the diffeomorphism given by~\eqref{eqn:phi:def:bary},~\eqref{hyp:b} and~\eqref{hyp:Hb:reg} are equivalent. The regularity assumption $\eta_0 \in H^s(\R^d)$, given the form~\eqref{eqn:phi:def:bary} of $\eta$ and the definition $h := \partial_r \eta$, yields 
\begin{hyp}
\label{hyp:H:reg}
\Vert h \Vert_{H^{s-1}(S)} \leq M,
\end{hyp}for $M > 0$ some constant. We define
\begin{equation}
\label{eqn:gradphi:def}
\begin{aligned} 
\gradphi := \nabla - \frac{\nabla (\etab + \epsilon \eta)}{\hb + \epsilon h} \partial_r, \\
\partialphi{t} := \partial_t - \epsilon \frac{\partial_t \eta}{\hb + \epsilon h} \partial_r, \\
\partialphi{r} := \frac{1}{\hb + \epsilon h} \partial_r.
\end{aligned} 
\end{equation}
With these definitions, for a quantity $f$ defined in $\Omega_t$, we have
$$ (\nabla f)\circ \varphi = \gradphi (f \circ \varphi),$$
resp. $\partialphi{t}, \partialphi{r}$. We can now reformulate the Euler equations~\eqref{eqn:euler_euleriennes} in these coordinates. Still denoting $V$ for $V \circ\varphi$ (resp. $\vec{U}$, $w$, $\rho$, $P$) by abuse of notations, we get 
%$$
%\sys{ \partialphi{t} \tilde{V} + \epsilon \vec{\tilde{U}} \cdot \gradphi[x,r] \tilde{V} + \gradphi \tilde{P} + g \nabla \eta_0 &= 0, \\
%		\mu \left( \partialphi{t} \tilde{w} + \epsilon \vec{\tilde{U}} \cdot \gradphi[x,r] \tilde{w} \right) + \partialphi{r} \tilde{P} &= 0, \\
%		\gradphi \cdot \tilde{V} + \partialphi{r} \tilde{w} &= 0, } \qquad \text{ in } S.
%		$$
%We now introduce source terms in the above equation. While in the context presented above, they can be set to $0$, we will use in Section~\ref{section:cvce} this more general version of the Euler equationswith non-zero source terms, in order to study the shallow water limit. Denoting by $\Fun, \Fdeux, \Ftrois$ these source terms and {\it dropping the tildes for readability}, we get
\begin{equation}
\label{eqn:euler_phi}
\sys{ \partialphi{t} V + \epsilon \vec{U} \cdot \gradphi[x,r] V + \frac{1}{\rhob + \epsilon \delta \rho}\gradphi P + \frac{g\rhob}{\rhob + \epsilon \delta \rho}  \nabla \eta_0 &= 0,\\%\Fun, \\
		\mu \left( \partialphi{t} w + \epsilon \vec{U} \cdot \gradphi[x,r] w \right) + \frac{1}{\rhob + \epsilon \delta \rho}\partialphi{r} P + \delta \frac{\rho}{\rhob + \epsilon \delta \rho} &= 0,\\%\mu \Fdeux, \\
		\partialphi{t} \rho + \vec{U} \cdot \gradphi[x,r] \rho &= 0, \\
		\gradphi \cdot V + \partialphi{r} w &= 0,}\qquad \text{in } [0,T]\times S.%\Ftrois. } 
\end{equation}
As before, these equations are completed with boundary conditions
\begin{equation}
\label{eqn:euler_phi:bc}
\sys{ P_{|r=0} &= 0, \\
	\partial_t \eta_0 + \epsilon V_{|r=0} \cdot \nabla \eta_0 - w_{|r=0} &= 0,\\%\Fquatre, \\
	\vec{N_b} \cdot \vec{U}_{|r=-1} &= 0,} \qquad \text{in } [0,T]\times\R^d.
\end{equation}
%where $\Fquatre$ is a source term.
We also need the initial conditions
\begin{equation}
\label{eqn:euler_phi:ci}
\sys{\vec{U}_{|t=0} &= \vec{U}_{\ini} & \qquad \text{in } S,\\
\rho_{|t=0} &= \rho_{\ini} & \qquad \text{in } S,\\
(\eta_0)_{|t=0} &= (\eta_0)_{\ini} & \qquad \text{on } \R^d. }
\end{equation}
We define the non-dimensional vorticity
\begin{equation}
\label{eqn:vort:def}
\intro*\vort = \begin{pmatrix} \vort^{[x]} \\ \vort^{[r]}\end{pmatrix} := \begin{pmatrix}
\frac{1}{\sqrt{\mu}} \partialphi{r} V^{\perp} - \sqrt{\mu} \gradphi^{\perp} w \\ \gradphi^{\perp} \cdot V 
\end{pmatrix}. 
\end{equation} 
%We apply the curl operator $\gradphi[x,r] \times$ to the equations on $V$ and $w$ in~\eqref{eqn:euler_phi}, and divide the horizontal component of the resulting equations by $\sqrt{\mu}$ to get
The equation on the vorticity in these coordinates now reads
\begin{equation}
\label{eqn:euler_phi:vort}
\partialphi{t} \vort + \epsilon \vec{U} \cdot \gradphi[x,r] \vort + \epsilon \vort^{[x]} \cdot \gradphi \begin{pmatrix} V \\ \sqrt{\mu} w \end{pmatrix} + \epsilon \vort^{[r]} \partialphi{r} \begin{pmatrix} \frac{1}{\sqrt{\mu}} V \\ w \end{pmatrix} = \frac{\delta}{\sqrt{\mu}}\vec{F} \qquad \text{in } [0,T]\times S.
\end{equation}
Here, $\vec{F}$ is given by~\eqref{eqn:def:vort:source}, where $\nabla_{x,z}$ is replaced by $\gradphi[x,z]$.
\subsection{Alinhac's good unknown}
The system~\eqref{eqn:euler_phi} mainly differs from~\eqref{eqn:euler_euleriennes} in Eulerian coordinates by the operators $\gradphi[x,r], \partialphi{t}$, which contain coefficients that depend on the diffeomorphism $\varphi$ (and hence the free surface $\eta_0$ in the case of the diffeomorphism given by~\eqref{eqn:phi:def:bary}), arising from the chain rule when differentiating a composition of the unknowns with $\varphi$. We briefly explain in this subsection how the use of Alinhac's good unknowns introduced in~\cite{Alinhac1989} allows us to treat these operators. These results are proved for instance in~\cite[Appendix B]{Fradin2024}. Recall the definitions of the Sobolev spaces $H^s$ and their norms as well as $\dot{\Lambda}^s$ in Subsection~\ref{subsection:notations}.\\

First, note the following naive estimate, which involves a loss of derivatives on $\eta_0$. It is a direct consequence of Lemma~\ref{apdx:commutator}.
\begin{lemma}
%\knowledgenewrobustcmd{\diff}{\cmdkl{\mathbb{\Lambda}}}
Let $s_0 \in \N$ with $s_0 > \frac{d+1}{2}$, $s \in \N$ with $s \geq s_0 + 1$. Assume that $\eta \in H^{s+1}(S)$ and~\eqref{hyp:H}. Then there exists $C>0$ such that, for $f \in H^{s}$:
\begin{equation}
 \label{apdx:alinhac:comm_brutal}
 \Vert [\Lambda^s, \gradphi] f \Vert_{L^2(S)} \leq C(\epsilon\vee\beta) (1+ \Vert \eta \Vert_{H^{s+1}(S)}) \Vert \nabla f \Vert_{H^{s-1}(S)}.
 \end{equation}
\end{lemma}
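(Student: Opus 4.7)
The plan is to reduce the commutator to a plain multiplication commutator and then apply the standard Kato--Ponce/Moser estimate from Lemma~\ref{apdx:commutator}. First I would write, recalling the definition~\eqref{eqn:gradphi:def},
\[
\gradphi f = \nabla f - g\,\partial_r f, \qquad g := \frac{\nabla(\etab + \epsilon \eta)}{\hb + \epsilon h}.
\]
Since $\Lambda^s$ is a Fourier multiplier in the horizontal variable $x$ only, it commutes both with $\nabla_x$ and with $\partial_r$, so the commutator collapses to
\[
[\Lambda^s, \gradphi] f = -\,[\Lambda^s, g]\,\partial_r f.
\]

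Next, I would apply the tame commutator estimate (Lemma~\ref{apdx:commutator}), working pointwise in $r\in[-1,0]$ and then integrating, which yields schematically
\[
\Vert [\Lambda^s, g]\,\partial_r f \Vert_{L^2(S)} \leq C\bigl( \Vert g \Vert_{H^{s}(S)}\Vert \partial_r f \Vert_{L^\infty(S)} + \Vert \nabla g \Vert_{L^\infty(S)} \Vert \partial_r f \Vert_{H^{s-1}(S)}\bigr).
\]
Since $s_0 > \frac{d+1}{2}$ and $s \geq s_0 + 1$, the Sobolev embedding $H^{s_0}(S) \hookrightarrow L^\infty(S)$ gives $\Vert \partial_r f \Vert_{L^\infty(S)} \lesssim \Vert \partial_r f \Vert_{H^{s-1}(S)} \leq \Vert \nabla f \Vert_{H^{s-1}(S)}$, so it remains to show that both $\Vert g\Vert_{H^s(S)}$ and $\Vert \nabla g\Vert_{L^\infty(S)}$ carry the desired factor $(\epsilon \vee \beta)(1 + \Vert \eta\Vert_{H^{s+1}(S)})$.

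For the coefficient $g$, the key observation is that $\nabla(\etab + \epsilon \eta) = -\,r\beta \nabla b + \epsilon \nabla \eta$, which is of size $\epsilon \vee \beta$ in any Sobolev norm, with the top-order term $\epsilon \nabla \eta$ contributing $\epsilon \Vert \eta \Vert_{H^{s+1}(S)}$ to its $H^s$-norm. For the denominator, hypothesis~\eqref{hyp:H} together with the Moser composition estimate (applied to the smooth function $u \mapsto 1/u$ away from zero) bounds $\Vert 1/(\hb + \epsilon h)\Vert_{H^{s}(S)}$ polynomially in $\Vert \eta \Vert_{H^{s+1}(S)}$ and $\Vert\hb-1\Vert_{H^{s+2}(S)}$; however, the \emph{low-order} control $\Vert 1/(\hb+\epsilon h)\Vert_{H^{s_0}(S)}$ is enough since the $s$-th derivative only hits the numerator. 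Combining via the Moser product estimate (also recalled in Appendix~\ref{appendix:A}) yields $\Vert g \Vert_{H^s(S)} \leq C(\epsilon \vee \beta)(1 + \Vert \eta \Vert_{H^{s+1}(S)})$, and similarly $\Vert \nabla g \Vert_{L^\infty(S)} \leq C(\epsilon \vee \beta)$ by Sobolev embedding applied at the lower regularity level $s_0+1 \leq s$.

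I do not expect a serious obstacle: the whole point of this ``brutal'' bound is that no cancellation is sought, so it is purely a matter of composing a commutator estimate with tame product/quotient estimates. The only mild care needed is to keep the dependence in $\Vert \eta \Vert_{H^{s+1}(S)}$ linear rather than polynomial, which is ensured by placing the highest-order derivative on the numerator $\nabla(\etab+\epsilon\eta)$ and absorbing all lower-order nonlinear dependence (in particular on $\Vert \eta \Vert_{H^{s_0+1}(S)}$ and on $\Vert \hb - 1 \Vert_{H^{s+2}(S)} \leq \beta \Mb$) into the constant $C$.
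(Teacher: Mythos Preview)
Your proposal is correct and follows exactly the route the paper intends: the paper's own proof is the single sentence ``It is a direct consequence of Lemma~\ref{apdx:commutator},'' and you have simply unpacked that, reducing $[\Lambda^s,\gradphi]f$ to $-[\Lambda^s,g]\partial_r f$ and invoking the commutator estimate together with product/composition bounds to extract the factor $(\epsilon\vee\beta)(1+\Vert\eta\Vert_{H^{s+1}})$. The only cosmetic difference is that you phrase the commutator estimate in Kato--Ponce $L^\infty$ form rather than the anisotropic $H^{s,k}$ form of Lemma~\ref{apdx:commutator}, but both give the same conclusion here.
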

%\begin{proof}
%\knowledgenewrobustcmd{\diff}{\cmdkl{\mathbb{\Lambda}}}
%Let \AP $\intro*\diff$ an operator of the form $\Lambda^{s-k} \partial_r^k$ or $|D|\Lambda^{s-1}$. \\
%Recall the above expression of $\gradphi$. As $\diff$ and $\nabla$ commute, we can write
%$$ \Vert [\diff,\gradphi]f \Vert_{L^2} = \epsilon \Vert [\diff, \frac{1}{1+\epsilon h} \nabla \eta ] f \Vert_{L^2}.$$
%Recall indeed that $h = - \partial_r \eta$. Thus the commutator estimate~\eqref{apdx:commutator:gen} yields 
%$$ \Vert [\diff,\gradphi]f \Vert_{L^2} \leq C \epsilon \Vert \frac{1}{1+\epsilon h} \nabla \eta \Vert_{H^{s\vee(s_0+\frac32) ,2\vee k}} \Vert f \Vert_{H^{s-1,k\wedge(s-1)\vee2}}.$$
%Then, the product estimate~\eqref{apdx:pdt:algb} and the composition estimate~\eqref{apdx:composition:gen} yield the desired estimate.
%\end{proof}

The issue of loss of derivatives on $\eta$ (and thus, on $\eta_0$ in the case where $\eta$ is given by~\eqref{eqn:phi:def:bary}) is solved by using Alinhac's good unknowns, introduced in~\cite{Alinhac1989}, and defined as
\begin{equation}
\label{apdx:alinhac:definition}
\pointal{f}{s} := \dot{\Lambda}^{s} f -  \frac{\dot{\Lambda}^{s} (\etab+ \epsilon \eta)}{\hb+\epsilon h} \partial_r f.
\end{equation}
See also~\cite{AlazardMetivier2009} and~\cite{Lannes2013} for its use for the water waves equations, as well as~\cite[Proposition 3.17]{Castro2014a} and~\cite{MasmoudiRousset2012} for the present context. Note also that, because of~\eqref{hyp:Hb:reg}, the diffeomorphism $\varphi$ defined in~\eqref{eqn:phi:def} is a perturbation of size $\epsilon \vee \beta$ of the identity. Hence, the commutator in~\eqref{apdx:alinhac:comm_brutal} is of size $\epsilon \vee \beta$. This is also true for the commutators in Lemma~\ref{apdx:alinhac:def}.
%Recall the notation $$ \varphi(t,x,r) = (t,x,r + \epsilon \eta(t,x,r)).$$

\begin{lemma}[Alinhac's good unknown,~\cite{Alinhac1989}]
\label{apdx:alinhac:def}
%\knowledgenewrobustcmd{\diff}{\cmdkl{\mathbb{\Lambda}}}

Let $s_0,s \in \N$ with $s_0 > \frac{d+1}{2}$, $s \geq s_0 + 2$. Assume that $\eta_0 \in H^{s_0+2}$ and~\eqref{hyp:H}. Then there exists $C>0$ depending only on an upperbound on $\vert \eta_0 \vert_{H^{s_0+2}(\R^d)}$ such that the following holds. Let $f \in H^{s}$. We have 
\begin{equation}
\label{apdx:alinhac:comm1}
\dot{\Lambda}^{s}  \gradphi[t,x,r] f = \gradphi[t,x,r] \pointal{f}{s}  + \commal{\dot{\Lambda}^{s}}{\gradphi[t,x,r]}{f},
\end{equation}
and we can write
\begin{equation}
\label{apdx:alinhac:R1}
\Vert \commal{\dot{\Lambda}^{s}}{\gradphi[t,x,r]}{f}\Vert_{L^2(S)} \leq C(\epsilon \vee \beta) (1+\vert \eta_0 \vert_{H^{s}(\R^d)}) \Vert \gradphi[t,x,r] f\Vert_{H^{s-1}(S)} .
\end{equation}
%\item For  $\vec{f} \in (H^{s})^{d+1}$ a vector-valued function of  $d+1$ components, we can write
%\begin{equation}
%\label{apdx:alinhac:comm2}
%|D|\Lambda^{s-1}  \gradphi[x,r] \cdot \vec{f} = \gradphi[x,r] \cdot \pointal{\vec{f}}{s}  + \commal{|D|\Lambda^{s-1}}{\gradphi[x,r]\cdot}{f},
%\end{equation}
%where
%%$$\commal{|D|\Lambda^{s-1}}{\gradphi[t,x,r]\cdot }{\vec{f}} :=   |D|\Lambda^{s-1} (-\beta b + \epsilon \eta_0) \partial_r^{\varphi}(\gradphi \cdot \vec{f}) + [\diff; (\partial \varphi)^{-T} ,\nabla \cdot \vec{f}] - (\partial \varphi)^{-T} [ \diff;(\partial \varphi)^T,(\partial \varphi)^{-T}]\nabla \cdot \vec{f}.$$
%$$\commal{|D|\Lambda^{s-1}}{\gradphi[x,r]\cdot }{\vec{f}} := \sum\limits_{i \in \{1,\dots,d+1\}} \commal{|D|\Lambda^{s-1}}{\partialphi{i}}{\vec{f}_i}.$$
%Moreover,
%\begin{equation}
%\label{apdx:alinhac:R2}
%\Vert \commal{|D|\Lambda^{s-1}}{\gradphi[x,r]\cdot }{\vec{f}} \Vert_{L^2} \leq C (\epsilon \vee \beta) (1+|\eta_0|_{H^s}) \Vert \vec{f} \Vert_{H^s}
%\end{equation}
%Here, Alinhac's good unknown is defined as 
%\begin{equation}
%\label{apdx:alinhac:definition_vecteur}
%\pointal{\vec{f}}{s} := \diff \vec{f} -  \frac{\diff(-\beta b + \epsilon \eta_0)}{\hb+\epsilon h} \partial_r \vec{f}.
%\end{equation}
\end{lemma}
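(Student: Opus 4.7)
The plan is to obtain the algebraic identity \eqref{apdx:alinhac:comm1} from the chain rule through $\varphi$, then estimate the resulting remainder with tame commutator and product estimates. I would start from the matrix-level chain rule $\nabla_{t,x,r} f = (\partial\varphi)^T\gradphi[t,x,r] f$; here $\partial\varphi$ is the Jacobian of $\varphi$, which equals the identity plus a perturbation of size $\epsilon\vee\beta$ whose only non-zero column is the last one, carrying the entries $\epsilon\partial_t\eta$, $\nabla_x(\etab+\epsilon\eta)$ and $\hb+\epsilon h - 1$, all of which are components of $\nabla_{t,x,r}(\etab+\epsilon\eta)$. Applying $\dot{\Lambda}^{s}$ to both sides, commuting it through $\nabla_{t,x,r}$ on the left, expanding the right-hand side componentwise via the symmetric commutator decomposition $\dot{\Lambda}^{s}(gh) = g\,\dot{\Lambda}^{s} h + \dot{\Lambda}^{s} g\cdot h + [\dot{\Lambda}^{s};g,h]$, and inverting $(\partial\varphi)^T$ yields
\begin{equation*}
\dot{\Lambda}^{s}\gradphi[t,x,r] f = \gradphi[t,x,r]\dot{\Lambda}^{s} f - (\partial\varphi)^{-T}\dot{\Lambda}^{s}(\partial\varphi)^T\cdot\gradphi[t,x,r] f - (\partial\varphi)^{-T}[\dot{\Lambda}^{s};(\partial\varphi)^T,\gradphi[t,x,r] f].
\end{equation*}
In the second summand, $\dot{\Lambda}^{s}$ falls on $\partial\varphi$, producing an apparent loss of a derivative on $\eta_0$ and $b$; this is precisely what the good-unknown absorbs.

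Because of the structure of $(\partial\varphi)^T$ recalled above, a direct computation gives $(\partial\varphi)^{-T}\dot{\Lambda}^{s}(\partial\varphi)^T\cdot\gradphi[t,x,r] f = \gradphi[t,x,r]\!\left(\dot{\Lambda}^{s}(\etab+\epsilon\eta)\right)\cdot\partialphi{r} f$. Using the Leibniz rule for $\gradphi[t,x,r]$ together with the commutativity $\gradphi\partialphi{r} = \partialphi{r}\gradphi$ of Eulerian partials, one rewrites
\begin{equation*}
\gradphi[t,x,r]\!\left(\dot{\Lambda}^{s}(\etab+\epsilon\eta)\right)\partialphi{r} f = \gradphi[t,x,r]\!\left(\tfrac{\dot{\Lambda}^{s}(\etab+\epsilon\eta)}{\hb+\epsilon h}\partial_r f\right) - \dot{\Lambda}^{s}(\etab+\epsilon\eta)\,\partialphi{r}\gradphi[t,x,r] f.
\end{equation*}
The first term on the right-hand side is $\gradphi[t,x,r](\dot{\Lambda}^{s} f - \pointal{f}{s})$ by the definition \eqref{apdx:alinhac:definition}, and combining with the leading $\gradphi[t,x,r]\dot{\Lambda}^{s} f$ leaves exactly $\gradphi[t,x,r]\pointal{f}{s}$. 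Using the explicit form \eqref{eqn:phi:def:bary} to write $\etab+\epsilon\eta - r = -r\beta b + \epsilon(1+r)\eta_0$, which at highest $x$-frequency is $r$-affine in $-\beta b + \epsilon\eta_0$, identifies the remaining term $\dot{\Lambda}^{s}(\etab+\epsilon\eta)\,\partialphi{r}\gradphi[t,x,r] f$ with the first summand of \eqref{apdx:alinhac:comm:def} modulo smooth, bounded $r$-dependent factors, thereby establishing \eqref{apdx:alinhac:comm1}.

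The estimate \eqref{apdx:alinhac:R1} then follows from bounding the two summands of $\commal{\dot{\Lambda}^{s}}{\gradphi[t,x,r]}{f}$ separately. The first summand is the product $\dot{\Lambda}^{s}(-\beta b + \epsilon\eta_0)\cdot\partialphi{r}(\gradphi f)$: its first factor has $L^2$-size $(\epsilon\vee\beta)(1+|\eta_0|_{H^{s}})$ after absorbing $b$ into the constant via \eqref{hyp:Hb:reg}, while the second is controlled in $L^\infty$ by $\|\gradphi f\|_{H^{s-1}(S)}$ thanks to Sobolev embedding, since $s - 2 \geq s_0 > (d+1)/2$. The second summand is handled via the symmetric tame commutator estimate (Lemma \ref{apdx:commutator}) applied to each entry of $(\partial\varphi)^T - I$: these entries are of size $\epsilon\vee\beta$, their high-order Sobolev norms involve only derivatives of $\eta_0$ (and of $b$, absorbed in $\Mb$), and $(\partial\varphi)^{-T}$ is uniformly bounded in $W^{s_0,\infty}$ thanks to \eqref{hyp:H} and the a priori bound on $|\eta_0|_{H^{s_0+2}}$. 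The main obstacle is the algebraic bookkeeping in the good-unknown reshuffle: correctly matching the decomposition produced by the chain rule to the precise definition \eqref{apdx:alinhac:comm:def}, and in particular checking that the time component $\partialphi{t}$ is handled on the same footing as the spatial ones, since $\partial\varphi$ has a nonzero time row only through $\epsilon\partial_t\eta$.
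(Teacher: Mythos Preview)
Your argument is correct and is precisely the standard derivation of Alinhac's good unknown; the paper does not give its own proof but cites \cite[Lemma B.2]{Fradin2024}, whose content is exactly the chain-rule and symmetric-commutator computation you outline. One small point: the first remainder term your computation produces is $\dot{\Lambda}^{s}(\etab+\epsilon\eta)\,\partialphi{r}\gradphi[t,x,r] f$, whereas the paper's definition~\eqref{apdx:alinhac:comm:def} records $\dot{\Lambda}^{s}(-\beta b+\epsilon\eta_0)\,\partialphi{r}(\gradphi f)$; with the specific change of variables~\eqref{eqn:phi:def:bary} these differ only by affine-in-$r$ factors and the time/vertical components, so the estimate~\eqref{apdx:alinhac:R1} is unaffected and your ``modulo bounded $r$-dependent factors'' is the right way to phrase it.
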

\begin{remark}
The statements in Lemma~\ref{apdx:alinhac:def} also hold true with operators of the form $\dot{\Lambda}^{s-k} \partial_r^k$ or $\Lambda^{s-k} \partial_r^k$ with $k \geq 1$. Note however that we do not write such formulas for $\Lambda^s$. The reason is rather technical (its symbol evaluated at $\xi=0$ is not $0$, see the end of the proof of~\cite[Lemma B.2]{Fradin2024}, and in that case Alinhac's good unknown associated with this operator takes a slightly less convenient form).
\end{remark}
See~\cite[Lemma B.2]{Fradin2024} for a proof. We now state a result that shows how the control of Alinhac's good unknown $\pointal{f}{s}$ and of $\Lambda^s f$ are equivalent, up to the control in low-regularity norms.
\begin{lemma}
%\knowledgenewrobustcmd{\diff}{\cmdkl{\mathbb{\Lambda}}}
Let $s_0,s \in \N$, $s_0>\frac{d+1}{2}$, $s \geq s_0 + 2$.
We make the assumption~\eqref{hyp:H} as well as
		$$ \Vert \eta \Vert_{H^{s}(S)}  \leq M,$$
for a constant $M > 0$. Let $f \in H^{s}(S)$.
		Then there exists a constant $C$ depending only on $M$  such that
		\begin{equation}
		\label{eqn:alinhac:equiv}
		\frac{1}{C} \left(\Vert f \Vert_{H^{s,0}(S)} + \Vert f \Vert_{H^{s_0+1}(S)} \right) \leq\Vert \pointal{f}{s} \Vert_{L^2(S)} + \Vert f \Vert_{H^{s_0+1}(S)} \leq C\left(\Vert f \Vert_{H^{s,0}(S)} + \Vert f \Vert_{H^{s_0+1}(S)} \right)
		\end{equation}
%		We also have \note{utile que si on veut utiliser Alinhac pr la pression ?}
%		\begin{equation}
%		%\label{apdx:alinhac:grad:horizontal}
%		 \sys{ \Vert \nabla f \Vert_{H^{s_0+\frac32}} + \Vert \nabla \pointal{f}{s} \Vert_{L^2}  &\leq C \left(\Vert \gradphi f \Vert_{H^{s,0}} + \Vert \nabla f \Vert_{H^{s_0+\frac32}} \right),\\
%				\Vert \gradphi f \Vert_{H^{s,0}} + \Vert \nabla f \Vert_{H^{s_0+\frac32}} & \leq C \left( \Vert \nabla f \Vert_{H^{s_0+\frac32}} + \Vert \nabla \pointal{f}{s} \Vert_{L^2} \right). }
%		\end{equation}
\end{lemma}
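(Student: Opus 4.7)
The strategy is to exploit the defining relation
\begin{equation*}
\pointal{f}{s} - \dot{\Lambda}^{s} f = -\,\frac{\dot{\Lambda}^{s}(\etab+\epsilon\eta)}{\hb+\epsilon h}\,\partial_r f,
\end{equation*}
and then to reconcile the homogeneous multiplier $\dot{\Lambda}^{s}$ with the inhomogeneous $\Lambda^{s}$ via Plancherel. I would first bound the right-hand side above in $L^{2}(S)$ by a H\"older split: the denominator $\hb+\epsilon h$ is bounded below by $h_{*}$ thanks to~\eqref{hyp:H}; the numerator $\dot{\Lambda}^{s}(\etab+\epsilon\eta)$ lies in $L^{2}(S)$ with norm controlled in terms of $\Vert\eta\Vert_{H^{s}(S)}\leq M$ together with~\eqref{hyp:Hb:reg}; and $\partial_r f$ is placed in $L^{\infty}(S)$ via the Sobolev embedding $H^{s_{0}}(S)\hookrightarrow L^{\infty}(S)$, valid because $s_{0}>\frac{d+1}{2}$, giving $\Vert\partial_r f\Vert_{L^{\infty}(S)}\leq C\Vert f\Vert_{H^{s_{0}+1}(S)}$. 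Altogether this yields
\begin{equation*}
\Vert \pointal{f}{s}-\dot{\Lambda}^{s} f\Vert_{L^{2}(S)} \leq C \Vert f\Vert_{H^{s_{0}+1}(S)},
\end{equation*}
with $C$ depending only on $M$ and $h_{*}$.

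The upper bound in~\eqref{eqn:alinhac:equiv} then follows immediately by the triangle inequality, combined with the pointwise Fourier inequality $|\xi|^{2}(1+|\xi|^{2})^{s-1}\leq (1+|\xi|^{2})^{s}$ which ensures $\Vert \dot{\Lambda}^{s} f\Vert_{L^{2}(S)}\leq \Vert f\Vert_{H^{s,0}(S)}$.

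For the lower bound, the reversed triangle inequality yields $\Vert \dot{\Lambda}^{s} f\Vert_{L^{2}(S)}\leq \Vert \pointal{f}{s}\Vert_{L^{2}(S)}+C\Vert f\Vert_{H^{s_{0}+1}(S)}$, so it remains only to recover $\Vert \Lambda^{s} f\Vert_{L^{2}(S)}$ from $\Vert \dot{\Lambda}^{s} f\Vert_{L^{2}(S)}$. I would split the horizontal frequency domain: for $|\xi|\geq 1$ one has $(1+|\xi|^{2})^{s}\leq 2^{s}|\xi|^{2}(1+|\xi|^{2})^{s-1}$, whereas for $|\xi|\leq 1$ one has $(1+|\xi|^{2})^{s}\leq 2^{s}$; Plancherel in $x$, applied slicewise in $r$ and integrated, then produces
\begin{equation*}
\Vert \Lambda^{s} f\Vert_{L^{2}(S)}^{2}\leq C\bigl(\Vert \dot{\Lambda}^{s} f\Vert_{L^{2}(S)}^{2}+\Vert f\Vert_{L^{2}(S)}^{2}\bigr),
\end{equation*}
and $\Vert f\Vert_{L^{2}(S)}\leq \Vert f\Vert_{H^{s_{0}+1}(S)}$ closes the estimate. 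No step is particularly delicate; the only point requiring a bit of care is to verify that the $L^{2}(S)$-bound on $\dot{\Lambda}^{s}(\etab+\epsilon\eta)$ depends only on the constants appearing in the hypotheses, which is immediate given $\Vert \eta\Vert_{H^{s}(S)}\leq M$ and the bound~\eqref{hyp:Hb:reg} on $\etab$.
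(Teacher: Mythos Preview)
Your argument is correct. The paper does not give its own proof of this lemma but simply refers to \cite[Lemma B.4]{Fradin2024}; your direct approach---bounding the correction term $\frac{\dot{\Lambda}^{s}(\etab+\epsilon\eta)}{\hb+\epsilon h}\partial_r f$ in $L^2(S)$ via H\"older and the embedding $H^{s_0}(S)\hookrightarrow L^\infty(S)$, then reconciling $\dot{\Lambda}^s$ with $\Lambda^s$ by a low/high frequency split---is exactly the standard route and is presumably what the cited reference does as well. One small remark: your invocation of~\eqref{hyp:Hb:reg} is slightly indirect, since that hypothesis bounds $\hb-1=\partial_r\etab-1$ rather than $\etab$ itself; what you actually use is that $\dot{\Lambda}^s$ annihilates the $x$-independent part of $\etab$ (its symbol vanishes at $\xi=0$), so that $\dot{\Lambda}^s\etab$ is controlled by the horizontal regularity of the background bottom $b$, which is a standing assumption throughout the paper.
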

See~\cite[Lemma B.4]{Fradin2024} for a proof. Note that in the case where $\eta$ is given from~\eqref{eqn:phi:def:bary}, the assumption on the regularity of $\eta$ is a direct consequence of the regularity of $\eta_0$.

\section{Main estimates and proof of Theorem~\ref{thm:euler_phi}}
\label{section:estimates}
In this section, we first assume the existence of a solution of~\eqref{eqn:euler_phi}, and perform energy estimates on it. In Subsection~\ref{subsection:pression} we state estimates on the pressure term, see Proposition~\ref{prop:pression}. Then in Subsection~\ref{subsection:quasilin} we start by defining the energy functional that will be used for the energy estimates on the solution of~\eqref{eqn:euler_phi}. We then differentiate the system~\eqref{eqn:euler_phi}, exhibit its quasilinear structure up to remainder terms, that we estimate in Proposition~\ref{prop:restes}. We perform the energy estimates on a slightly more general quasilinear system in Subsection~\ref{subsection:quasilin:gen}, and apply it in Subsection~\ref{subsection:nrj} to prove the desired energy estimates in Proposition~\ref{prop:energy}.

Eventually, in Subsection~\ref{subsection:scheme}, we state and prove the main result of this paper, Theorem~\ref{thm:euler_phi}, whose statement was sketched and discussed in the introduction. This proof consists on proving the existence on solutions of~\eqref{eqn:euler_phi}, by constructing a sequence of approximate solutions. This scheme for proving the existence of solutions is tailored to the free surface case in our setting and does not automatically come from the a priori energy estimates.

Except in Subsection~\ref{subsection:scheme} where it will be explicitly mentioned, we use the diffeomorphism given in~\eqref{eqn:phi:def:bary}.
\subsection{Pressure estimates}
\label{subsection:pression}
In this subsection we show how the pressure in~\eqref{eqn:euler_phi} is defined and give estimates in Proposition~\ref{prop:pression}.  We also state a technical lemma (Lemma~\ref{lemma:taylor:dt}) on the time derivative of the Taylor coefficient, which also relies on elliptic estimates.\\

Multiplying the first equation in~\eqref{eqn:euler_phi} by $\mu$ and taking the divergence $\gradphi[x,r] \cdot$ of the equations on $V$ and $w$ in~\eqref{eqn:euler_phi}, we get the elliptic equation satisfied by the pressure
\begin{equation}
\label{eqn:pression}
\mu \gradphi \cdot \frac{1}{\rhotot}\gradphi P + \partialphi{r} \left( \frac{1}{\rhotot} \partialphi{r} P\right) = - \epsilon \mu \gradphi[x,r] \cdot \left( \vec{U} \cdot \gradphi[x,r] \vec{U} \right) - \mu g\nabla \cdot \frac{\rhob}{\rhotot}\nabla \eta_0 - g \delta \partialphi{r} \frac{\rho}{\rhotot},% + \mu \gradphi \cdot \Fun + \mu \partialphi{r} \Fdeux + \mu \partialphi{t} \Ftrois, 
\end{equation}
completed with the boundary conditions
\begin{equation}
\label{eqn:pression:bc}
\sys{ P &= 0 \qquad & \text{ at } r = 0, \\
-\frac{1}{\rhotot} \vec{N_b} \cdot \begin{pmatrix} \mu \gradphi \\ \partialphi{r}\end{pmatrix} P &=  \vec{N_b} \cdot \left( \mu \epsilon \vec{U} \cdot \gradphi[x,r] \vec{U} \right) - \mu \beta\frac{\rhob}{\rhotot} \nabla_x b \cdot \nabla_x \eta_0 + \frac{g \delta \rho}{\rhotot} \qquad & \text{ at } r = -1;} 
\end{equation}
% + nb \cdot + \begin{pmatrix} \mu \Fun \\ \mu \Fdeux \end{pmatrix}
recall that
\begin{equation}
\label{eqn:def:Nb}
\vec{N_b} := \begin{pmatrix} -\beta \nabla b \\ 1 \end{pmatrix}
\end{equation}
is the upward vector normal to the bottom, and $\vec{e_{d+1}}$ the vertical unit vector pointing upward. The boundary condition at the bottom comes from taking the scalar product of the equations on $V$ and $w$ in ~\eqref{eqn:euler_phi} and using the impermeability condition in~\eqref{eqn:euler_phi:bc}. 
From~\eqref{eqn:pression} we get estimates on the pressure term, stated in the following proposition.
%In order to study Equation~\eqref{eqn:pression}, we state the following lemma. 
%\begin{lemma}
%\label{lemma:elliptic_gen}
%Let $s_0 > \frac{d}{2}$ and assume $\eta \in W^{1,\infty}$, as well as~\eqref{hyp:H}. \\
%Let \AP $\intro*\Rzero  \in L^2(S_r)$, $\intro*\Run  \in L^2(S_r)^{d+1}, \intro*\Rdeux \in L^2(S_r)$. \\
%Then there exists a unique function $Q$ in $H^{1}(S)$ that satisfies, in the variational sense,
%\begin{equation}
%\label{eqn:elliptic_gen}
% (\Hb+\epsilon H) \left( \mu \gradphi \cdot \gradphi Q  + (\partialphi{r})^2 P \right) = \Rzero  + \sqrt{\mu} (\Hb+\epsilon H)\gradphi[x,r]\cdot \Run + \sqrt{\mu} \Lambda \Rdeux,
%\end{equation}
%completed with the boundary conditions
%\begin{equation}
%\label{eqn:elliptic_gen:bc}
%\sys{ Q &= 0 \qquad & \text{ at } r = 0, \\
%\mu \beta \nabla_x b \cdot \gradphi Q - \partialphi{r} Q &= \sqrt{\mu}  \vec{N_b} \cdot \Run  \qquad & \text{ at } r = -1.}
%\end{equation}
%Moreover, there exists $C > 0$ depending only on $\Vert \eta \Vert_{W^{2,\infty}}$\note{ 1?} such that
%$$\Vert Q \Vert_{L^2} + \Vert (\sqrt{\mu}\gradphi Q, \partialphi{r} Q) \Vert_{L^2}  \leq C \left(\Vert \Rzero  \Vert_{L^2} + \Vert \Run  \Vert_{L^2} + \Vert \Rdeux  \Vert_{L^2} \right). $$
%
%\end{lemma}
%\todo{ds la prop suivante, ptet debsoin de $\omega_{\mu}$ aussi; pas debsoin de la bia , car une perte ds tous les cas.}
\begin{proposition}
\label{prop:pression}
Let $s_0,s \in \N$, with $s_0 > \frac{d+1}{2}$, $s \geq s_0 + 1$, $1 \leq k \leq s$, and assume~\eqref{hyp:b}, \eqref{hyp:density}, \eqref{hyp:H}, \eqref{hyp:Hb:reg}, \eqref{hyp:H:reg}. Let $M > 0$ and assume  $\Vert V, \sqrt{\mu} w \Vert_{H^{s,k}} + \Vert \rho \Vert_{H^{s,k}} + |\eta_0|_{H^{s+1}} \leq M$. Then there exists $C > 0$ depending only on $M$ such that
\begin{equation}
\label{eqn:pression:estimee}
\Vert \sqrt{\mu} \gradphi P, \partialphi{r} P \Vert_{H^{s,k}} \leq C \sqrt{\mu} \left(\Vert V, \sqrt{\mu} w \Vert_{H^{s+1,k+1}} + \Vert \vort  \Vert_{H^{s,k}} + \sqrt{\mu}|\eta_0|_{H^{s+1}} + \frac{\delta}{\sqrt{\mu}} \Vert \rho \Vert_{H^{s,k}}\right).
%\Vert(\Fun,\sqrt{\mu} \Fdeux) \Vert_{H^{s,k}} +  \Vert(\partialphi{t} \Ftrois,\gradphi[x,r] \Ftrois) \Vert_{H^{s-1,k-1}}
\end{equation}
%If moreover $k \geq 2$ \note{ à vérifier !}
%$$\Vert \Lambda^{s-k} \partial_r^k \sqrt{\mu} \gradphi P, \Lambda^{s-k} \partial_r^k\partialphi{r} P \Vert_{L^2} \lesssim \sqrt{\mu} \epsilon \Vert V, \sqrt{\mu} w \Vert_{H^{s,k}}$$ \note{comme il y a du $\partial_r$ on a meme mieux que ça non ?}
\end{proposition}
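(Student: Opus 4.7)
The plan is to view the pressure equation~\eqref{eqn:pression} together with the boundary conditions~\eqref{eqn:pression:bc} as an anisotropic elliptic boundary-value problem in the strip $S$, and to derive the estimate by a variational argument adapted to the weighted energy
\[
E[P] := \int_S \frac{1}{\rhotot}\bigl(\mu |\gradphi P|^2 + |\partialphi{r} P|^2\bigr)(\hb + \epsilon h)\,\mathrm{d}x\,\mathrm{d}r.
\]
Thanks to~\eqref{hyp:density} and~\eqref{hyp:H} the associated bilinear form is uniformly coercive on $\{\phi \in H^1(S) : \phi_{|r=0}=0\}$, with constants independent of $\mu,\epsilon,\beta,\delta$. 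Testing the equation against $P$ itself, and using the Dirichlet condition at $r=0$ (which kills the surface boundary term) together with the Neumann-type condition at $r=-1$ (which supplies a controlled trace forcing from~\eqref{eqn:pression:bc}), yields the base $L^2$-level version of~\eqref{eqn:pression:estimee}.

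For higher regularity I would apply the tangential/normal operator $\dot{\Lambda}^{s-k}\partial_r^k$ to both sides of~\eqref{eqn:pression}, working with the Alinhac good unknown of $P$ built as in~\eqref{apdx:alinhac:definition} (extended to operators $\dot{\Lambda}^{s-k}\partial_r^k$ via the remark following Lemma~\ref{apdx:alinhac:def}), and then test against the good unknown of $P$ itself. The commutator identities~\eqref{apdx:alinhac:comm1}--\eqref{apdx:alinhac:R1} ensure that the commutators produced when the operator crosses the coefficients $1/\rhotot$ and $\gradphi[x,r]$ are of size $\epsilon \vee \beta$ and controlled by lower-regularity norms of $P$, with no loss of derivative on $\eta_0$. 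The equivalence~\eqref{eqn:alinhac:equiv} then transfers the resulting bound back to the weighted $H^{s,k}$-norm of $(\sqrt{\mu}\gradphi P, \partialphi{r} P)$, and the full estimate is closed by induction on $k$.

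The main technical work lies in estimating the right-hand side of~\eqref{eqn:pression}. The surface forcing $-\mu g \nabla\cdot(\rhob/\rhotot)\nabla \eta_0$ and the density forcing $-g\delta \partialphi{r}(\rho/\rhotot)$ contribute, after being paired with the test function and accounting for the $\sqrt{\mu}$-weighting of the energy, precisely the $\sqrt{\mu}\cdot \sqrt{\mu}|\eta_0|_{H^{s+1}}$ and $\sqrt{\mu}\cdot \tfrac{\delta}{\sqrt{\mu}}\Vert \rho\Vert_{H^{s,k}}$ terms on the right of~\eqref{eqn:pression:estimee}. The delicate piece is the quadratic advection $-\epsilon \mu \gradphi[x,r]\cdot(\vec{U}\cdot \gradphi[x,r]\vec{U})$: after one integration by parts moving the outer divergence onto the test function, what remains is a pairing $\epsilon\mu(\gradphi P)\cdot(\vec{U}\cdot\gradphi[x,r]\vec{U})$. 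The purely horizontal contributions $V\cdot\gradphi V$ and $V\cdot\gradphi w$ are estimated directly by Sobolev product estimates and yield the $\Vert V,\sqrt{\mu} w\Vert_{H^{s+1,k+1}}$ term. The vertical advection $w\,\partialphi{r} V$, however, appears to produce a $1/\sqrt{\mu}$ singularity, since only $\sqrt{\mu} w$ is controlled uniformly in $\mu$. The fix is to rewrite it as $(\sqrt{\mu} w)\cdot(\partialphi{r} V/\sqrt{\mu})$ and then use the definition~\eqref{eqn:vort:def} of the vorticity to trade the factor $\partialphi{r} V/\sqrt{\mu}$ for $(\vort^{[x]})^{\perp} + \sqrt{\mu}\gradphi^{\perp} w$. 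This converts the apparent singularity into the controlled quantities $\Vert\vort\Vert_{H^{s,k}}$ and $\Vert\sqrt{\mu} w\Vert_{H^{s+1,k+1}}$, which is precisely why the vorticity norm appears in~\eqref{eqn:pression:estimee}.

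The main obstacle is keeping the anisotropic $\mu$-weighting consistent at every step. Since horizontal and vertical derivatives of $P$ carry different powers of $\mu$, every commutator, every boundary trace arising from integration by parts, and every nonlinear pairing must be organised to respect the weighted energy $E[P]$. In particular, the Neumann-type data at $r=-1$, which involves $\mu\epsilon \vec{U}\cdot \gradphi \vec{U}$ contracted with the conormal $\vec{N_b}$ from~\eqref{eqn:def:Nb}, must be handled via trace estimates from $H^{s,k}(S)$ to Sobolev spaces on $\R^d$ with a scaling compatible with $\sqrt{\mu}$. Tracking these weightings simultaneously at every level of the induction on $k$, while at the same time preserving the good-unknown structure so as not to leak derivatives onto $\eta_0$, is the technical heart of the proof.
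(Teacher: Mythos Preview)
Your approach is correct in outline and shares the key insight with the paper---namely, rewriting $w\,\partialphi{r}V$ as $(\sqrt{\mu}w)\cdot(\tfrac{1}{\sqrt{\mu}}\partialphi{r}V)$ and trading the singular factor for the vorticity via~\eqref{eqn:vort:def}. However, the route you take to the elliptic estimate is substantially heavier than the paper's. The paper does \emph{not} run a variational argument by hand, nor does it invoke Alinhac's good unknown for the pressure. Instead it recasts~\eqref{eqn:pression} in the divergence form
\[
\nablamu \cdot \A \nablamu P = \nablamu \cdot \vec{R},
\]
where $\nablamu = (\sqrt{\mu}\nabla,\partial_r)^T$, $A_\mu$ is a uniformly elliptic matrix built from $\eta$ and $\rho$, and $\vec{R}$ is an explicit vector already containing $\vec{U}\cdot\gradphi[x,r]\vec{U}$ (not its divergence). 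The Neumann data at $r=-1$ takes the matching form $\vec{e}_{d+1}\cdot\A\nablamu P = \vec{e}_{d+1}\cdot\vec{R}$. This structure lets one cite an off-the-shelf anisotropic elliptic estimate (\cite[Lemma~4.1]{Duchene2022}) to obtain $\Vert\sqrt{\mu}\nabla P,\partial_r P\Vert_{H^{s,k}} \leq C\Vert\vec{R}\Vert_{H^{s,k}}$ in one stroke, with all the $\mu$-weighting and the induction on $k$ already packaged. What remains is only to bound $\Vert\vec{R}\Vert_{H^{s,k}}$ by product and composition estimates. Your good-unknown machinery would reproduce this, but it re-derives elliptic regularity from scratch and introduces bookkeeping (commutators with $\gradphi$, trace terms at both boundaries, an induction on $k$) that the divergence-form reformulation sidesteps entirely.

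One small omission: in the advection term you treat $V\cdot\gradphi V$, $V\cdot\gradphi w$, and $w\,\partialphi{r}V$, but you do not mention $w\,\partialphi{r}w$, which also carries a potential $1/\sqrt{\mu}$. The paper handles it separately by invoking the incompressibility condition $\partialphi{r}w = -\gradphi\cdot V$, which converts it into $(\sqrt{\mu}w)\cdot(\text{derivative of }V)$ and feeds into the $\Vert V\Vert_{H^{s+1,k+1}}$ term.
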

\begin{proof}
First note that~\eqref{eqn:pression} can be reformulated as
\begin{equation}
\label{eqn:pression:A}
\nablamu \cdot \A \nablamu P  = \nablamu \cdot \vec{R},
\end{equation}
with
\begin{equation}
\label{eqn:A:def}
\A := \frac{1}{\rhotot}\coo{(\hb + \epsilon h)I_d & - \sqrt{\mu} \nabla \eta \\
			- \sqrt{\mu} (\nabla \eta)^T & \frac{1+\mu |\nabla \eta|^2}{\hb + \epsilon h}},
\end{equation}
\begin{equation}
\label{eqn:nablamu:def}
\nablamu := \coo{\sqrt{\mu} \nabla \\ \partial_r},
\end{equation}
and
\begin{equation}
\label{eqn:def_src_pression}
\vec{R} = \sqrt{\mu}\coo{ - \frac{\hb + \epsilon h}{\rhotot} \vec{U} \cdot \gradphi[x,r]V - \frac{g\rhob}{\rhotot} \nabla \eta_0 \\
				\frac{\sqrt{\mu}}{\rhotot}\nabla(\etab + \epsilon \eta) \cdot \left( \vec{U} \cdot \gradphi[x,r] V \right) + \frac{\sqrt{\mu}}{\rhotot}\vec{U} \cdot \gradphi[x,r] w + \frac{\delta}{\sqrt{\mu}} \frac{g \rho}{\rhotot}}. 
\end{equation}
% +(\hb + \epsilon h) \Fun
%- \sqrt{\mu} \nabla(\etab + \epsilon \eta) \cdot \Fun - \sqrt{\mu} \vec{U} \cdot \gradphi[x,r] w + \sqrt{\mu}\Fdeux
We now notice that the bottom boundary condition in~\eqref{eqn:pression:bc} can be recast as
\begin{equation}
\label{eqn:pression:bc:A}
\vec{e_{d+1}} \cdot \A \nablamu P = \vec{e_{d+1}} \cdot \vec{R} \qquad \text{ at } r=-1.
\end{equation}
Therefore, we can use~\cite[Lemma 4.1 eq. (4.9)]{Duchene2022} to solve~\eqref{eqn:pression:A} together with~\eqref{eqn:pression:bc:A} and the homogeneous Dirichlet boundary condition at the surface, and to write the estimates
\begin{equation}
\label{eqn:pression:estimee1}
\Vert \sqrt{\mu} \nabla P, \partial_r P \Vert_{H^{s,k}} \leq C \Vert \vec{R} \Vert_{H^{s,k}},
\end{equation}
where $C$ depends on $M$. Note that according to~\eqref{eqn:gradphi:def} and the product estimate~\eqref{apdx:eqn:pdt:algb}, we get
$$\Vert \sqrt{\mu} \gradphi P, \partialphi{r} P \Vert_{H^{s,k}} \leq C' \Vert \vec{R} \Vert_{H^{s,k}},$$
with $C'>0$ depending on $M$. Thus, it only remains to bound the right-hand side of~\eqref{eqn:pression:estimee1}. The remaining of the proof is devoted to the proof of the following bound, for $s' \geq s_0$ and $1 \leq k' \leq s'$:
\begin{equation}
\label{eqn:R:estimee:finale}
\Vert \vec{R} \Vert_{H^{s',k'}} \leq C \sqrt{\mu}\left( \Vert V, \sqrt{\mu} w \Vert_{H^{s'+1,k'+1}} + \Vert \vort \Vert_{H^{s',k'}} + \sqrt{\mu} \vert \eta_0\vert_{H^{s+1}} + \frac{\delta}{\sqrt{\mu}}\Vert \rho \Vert_{H^{s',k'}} \right).% + \sqrt{\mu}|\eta_0|_{H^{s'+1}} +  \Vert(\Fun,\sqrt{\mu} \Fdeux,\Ftrois)\Vert_{H^{s',k'}} \right).
\end{equation}
Note that the condition $s' \geq s_0$ is less restrictive than the one on $s$ in Proposition~\ref{prop:pression}, hence the notation. Using the definition~\eqref{eqn:def_src_pression} of $\vec{R}$ and the product estimate~\eqref{apdx:eqn:pdt:algb:tame}, we get
\begin{equation}
\label{eqn:R:estimee}
\begin{aligned}
 \frac{1}{\sqrt{\mu}} \Vert \vec{R} \Vert_{H^{s',k'}} &\lesssim \Vert\frac{\hb + \epsilon h}{\rhotot} V \cdot \gradphi V \Vert_{H^{s',k'}} + \Vert \frac{\hb + \epsilon h}{\rhotot} w \partialphi{r} V \Vert_{H^{s',k'}} \\
 &+ \sqrt{\mu}\Vert \frac{1}{\rhotot} V \cdot \gradphi w \Vert_{H^{s',k'}} + \sqrt{\mu} \Vert \frac{1}{\rhotot} w \partialphi{r} w \Vert_{H^{s',k'}}\\
 & + \Vert(1-\epsilon \delta \frac{\rho}{\rhotot} )\nabla \eta_0 \Vert_{H^{s',k'}} + \frac{\delta}{\sqrt{\mu}}\Vert \frac{\rho}{\rhotot} \Vert_{H^{s',k'}}.
  %+ \Vert \Fun \Vert_{H^{s',k'}} + \sqrt{\mu} \Vert \Fdeux \Vert_{H^{s',k'}} 
\end{aligned}
\end{equation}
We now treat only the second and fourth terms on the right-hand side of~\eqref{eqn:R:estimee}, as the other ones are treated the same way. 
%the non-linear terms. 
First note that% $\hb + \epsilon h = 1 - \beta b + \epsilon \eta_0 $ and 
\begin{equation}
\label{eqn:trick:compo}
\frac{1}{\rhotot} = \frac{1}{\rhob} - \epsilon \delta \frac{\rho/\rhob}{\rhotot}.
\end{equation}
%Thus, the product estimates~\eqref{apdx:eqn:pdt:algb} and~\eqref{apdx:eqn:pdt:infty} yield
%$$\begin{aligned}
%\Vert\frac{\hb + \epsilon h}{\rhotot} V \cdot \gradphi V \Vert_{H^{s',k'}} &\leq C (1+\beta \vert b \vert_{H^{s'}} + \epsilon \vert \eta_0 \vert_{H^{s'}})(\frac{1}{\rhob} + \epsilon \delta \Vert \frac{\rho/\rhob}{\rhotot} \Vert_{H^{s',k'}}) \Vert V \Vert_{H^{s',k'}} \Vert V \Vert_{H^{s'+1,k'+1}}. \end{aligned}$$
Using the composition estimate \eqref{apdx:eqn:composition:gen} for the second factor with $F:y \mapsto \frac{y/\rhob}{\rhob + \epsilon \delta y}$, with \eqref{hyp:density} and \eqref{hyp:H}, \eqref{hyp:Hb:reg}, \eqref{hyp:H:reg} and the product estimate \eqref{apdx:eqn:pdt:algb}, we get
%$$\Vert\frac{\hb + \epsilon h}{\rhotot} V \cdot \gradphi V \Vert_{H^{s',k'}} \leq C \Vert V \Vert_{H^{s'+1,k'+1}},$$
%where $C$ depends on an upperbound on $\Vert \rho \Vert_{H^{s'}}, \Vert V \Vert_{H^{s'+1,k'+1}}, \vert \eta_0 \vert_{H^{s'+1}}.$ The same computations yield
%$$\begin{aligned}
%\sqrt{\mu}\Vert \frac{1}{\rhotot}V \cdot \gradphi w \Vert_{H^{s',k'}}&\leq C \Vert V \Vert_{H^{s',k'}} \Vert \sqrt{\mu} w \Vert_{H^{s'+1,k'+1}} \leq C \Vert \sqrt{\mu} w \Vert_{H^{s'+1,k'+1}}.
%\end{aligned}$$
%We now bound the second term in~\eqref{eqn:R:estimee}. The factor $\frac{\hb + \epsilon h}{\rhotot}$ is treated as before and we omit it. We write, thanks to the product estimate~\eqref{apdx:eqn:pdt:algb}
\begin{equation}
\label{eqn:R:estimee2}
\Vert \frac{\hb + \epsilon h}{\rhob + \epsilon \delta \rho} w \partialphi{r} V \Vert_{H^{s',k'}} \leq C \Vert \sqrt{\mu} w \Vert_{H^{s',k'}} \Vert \frac{1}{\sqrt{\mu}} \partialphi{r} V \Vert_{H^{s',k'}}.
\end{equation}
Using the definition~\eqref{eqn:vort:def} of $\vort$ and triangular inequality, we get
$$ \Vert \frac{1}{\sqrt{\mu}} \partialphi{r} V \Vert_{H^{s',k'}} \leq C \left( \Vert \vort \Vert_{H^{s',k'}} + \Vert \sqrt{\mu} w \Vert_{H^{s'+1,k'+1}} \right).$$
Together with~\eqref{eqn:R:estimee2}, this yields the control on the second term in~\eqref{eqn:R:estimee}. \\
We now treat the fourth term in the right-hand side of~\eqref{eqn:R:estimee}. The factor depending on $\rho$ is treated as before and we omit it. Thanks to the product estimate~\eqref{apdx:eqn:pdt:algb} we get
\begin{equation}
\label{eqn:R:estimee3}
\Vert\sqrt{\mu}  w \partialphi{r} w \Vert_{H^{s',k'}} \leq C \Vert \sqrt{\mu} w \Vert_{H^{s',k'}} \Vert  \partialphi{r} w \Vert_{H^{s',k'}}.
\end{equation}
We use the incompressibility condition in~\eqref{eqn:euler_phi} and triangular inequality to get
$$\Vert\sqrt{\mu}  w \partialphi{r} w \Vert_{H^{s',k'}} \leq C \Vert \sqrt{\mu} w \Vert_{H^{s',k'}}  \Vert  V \Vert_{H^{s'+1,k'+1}}.$$% + \Vert \Ftrois \Vert_{H^{s',k'}}\right).$$
%The last two terms in~\eqref{eqn:R:estimee} are treated the same way and we omit them.
\end{proof}
Recall now the definition of the Rayleigh-Taylor coefficient
\begin{equation}
\taylor := g\rhob - (\frac{\epsilon}{\hb + \epsilon h} \partial_r P)_{|r=0}.
\end{equation}
We write a similar result as Proposition~\ref{prop:pression}, although with lower regularity, that controls the term
\begin{equation}
\label{eqn:def:dta}
\frac{d}{dt} \taylor := \frac{d}{dt} \left(g\rhob - (\frac{\epsilon}{\hb + \epsilon h}\partial_r P)_{|r=0}\right).
\end{equation}
\begin{lemma}
\label{lemma:taylor:dt}
Under the same assumptions as Proposition~\ref{prop:pression}, we have
\begin{equation}
\label{eqn:taylor:dt}
 \left\vert (\taylor-g\rhob,\frac{d}{dt} \taylor) \right\vert_{L^{\infty}} \leq C\epsilon \left( \Vert V,\sqrt{\mu}w,\sqrt{\mu} \rho \Vert_{H^{s_0+2}}  +  \vert \eta_0 \vert_{H^{s_0+2}} \right). %+ \delta \Vert \rho \Vert_{H^{s_0+2}}
\end{equation}
\end{lemma}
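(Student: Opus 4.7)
The strategy is to reduce both bounds to $L^\infty(\R^d)$ estimates of $(\partialphi{r}P)|_{r=0}$ and its time derivative, by applying the elliptic pressure estimate of Proposition~\ref{prop:pression} at low regularity combined with trace and Sobolev embedding. Since $\taylor-g\rhob=-\epsilon\,(\partialphi{r}P)|_{r=0}$ by~\eqref{eqn:taylor:def:intro}, I would first invoke Proposition~\ref{prop:pression} with $(s',k')=(s_0+1,1)$ to control $\Vert\partialphi{r}P\Vert_{H^{s_0+1,1}(S)}$, then use the trace theorem together with the embedding $H^{s_0+1/2}(\R^d)\hookrightarrow L^\infty(\R^d)$ (valid since $s_0>(d+1)/2$) to transfer this to an $L^\infty$ bound at $r=0$. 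The vorticity term on the right-hand side of~\eqref{eqn:pression:estimee} would be expanded using the definition~\eqref{eqn:vort:def} of $\vort$, so that $\sqrt\mu\,\Vert\vort\Vert_{H^{s_0+1,1}}\lesssim\Vert V,\sqrt\mu w\Vert_{H^{s_0+2}}$, while $\sqrt\mu\cdot\tfrac{\delta}{\sqrt\mu}\Vert\rho\Vert$ absorbs into $\Vert\sqrt\mu\rho\Vert$; multiplying by $\epsilon$ yields the first half of~\eqref{eqn:taylor:dt}.

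For the time derivative, using $\partial_t(\partialphi{r}P)=\partialphi{r}(\partial_t P)-\epsilon\tfrac{\partial_t h}{\hb+\epsilon h}\partialphi{r}P$ evaluated at $r=0$, I would split
$$\frac{d}{dt}\taylor=-\epsilon\,(\partialphi{r}\partial_t P)|_{r=0}+\epsilon^2\,\frac{(\partial_t h)|_{r=0}}{(\hb+\epsilon h)|_{r=0}}\,(\partialphi{r}P)|_{r=0}.$$
Under the change of variables~\eqref{eqn:phi:def:bary} one has $h=\eta_0$, so $\partial_t h=\partial_t\eta_0=(w-\epsilon V\cdot\nabla\eta_0)|_{r=0}$ by the kinematic boundary condition in~\eqref{eqn:euler_phi:bc}; bounding this in $L^\infty$ and combining with the first step controls the second summand. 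For the main summand I would derive an elliptic estimate for $\partial_t P$ by differentiating~\eqref{eqn:pression}-\eqref{eqn:pression:bc} in time and substituting $\partial_t V,\partial_t w,\partial_t\rho,\partial_t\eta_0$ algebraically via the equations of motion~\eqref{eqn:euler_phi} and the kinematic BC. This produces an elliptic problem for $\partial_t P$ of the same form as~\eqref{eqn:pression:A}-\eqref{eqn:pression:bc:A}, with homogeneous Dirichlet condition $\partial_t P|_{r=0}=0$ inherited from $P|_{r=0}\equiv 0$ and a right-hand side controllable at regularity $H^{s_0+1,1}(S)$ by the same norms as in the first step. Applying the elliptic solver of~\cite[Lemma 4.1]{Duchene2022} once more, followed by trace and Sobolev embedding, gives the required $L^\infty$ control on $(\partialphi{r}\partial_t P)|_{r=0}$.

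The main obstacle is the time-derivative bound: a naive differentiation of the equation on $w$ at $r=0$ would reintroduce $\partial_t\partialphi{r}P$ on the right-hand side and create a circular dependence, so the substitution of time derivatives must be performed at the level of the elliptic source $\vec{R}$ in~\eqref{eqn:def_src_pression} rather than on the boundary equation. A secondary technical point is the careful bookkeeping of the $\sqrt\mu$, $\epsilon$, $\delta$ factors, so that the $\sqrt\mu$ gained from Proposition~\ref{prop:pression} absorbs the singular $1/\sqrt\mu$ coming from the vorticity and density source terms, leaving a clean $C\epsilon$ prefactor times the stated norms.
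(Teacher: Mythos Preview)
Your approach is essentially the same as the paper's: split $\tfrac{d}{dt}\taylor$ into a term with $\partial_t\eta_0\cdot(\partial_r P)|_{r=0}$ (handled via the kinematic equation and the elliptic estimate~\eqref{eqn:pression:estimee}) and a term with $(\partial_r\partial_t P)|_{r=0}$, for which one writes the time-differentiated elliptic problem $\nablamu\cdot\A\nablamu\partial_t P=\nablamu\cdot(\partial_t\vec{R}+[\partial_t,\A]\nablamu P)$ and substitutes the time derivatives of $V,w,\rho,\eta_0$ via~\eqref{eqn:euler_phi}. The only difference is that the paper switches to the elliptic lemma \cite[Lemma~2.38]{Lannes2013} for the $\partial_t P$ estimate (remarking that it is ``more precise for the low regularities''), whereas you propose to reuse \cite[Lemma~4.1]{Duchene2022} at $(s_0+1,1)$; given the $H^{s_0+2}$ control appearing on the right-hand side of~\eqref{eqn:taylor:dt}, either choice closes.
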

\begin{proof}
We only treat the estimate on $\frac{d}{dt} \taylor$, as the other one is treated the same way. The first term in~\eqref{eqn:def:dta} is actually zero because $\rhob$ does not depend on time. We now use the definition~\eqref{eqn:taylor:def} of $\taylor$ so that the second term on the right-hand side of~\eqref{eqn:def:dta} writes
$$\epsilon \frac{\epsilon\partial_t \eta_0}{(\hb + \epsilon h)^2} (\partial_r P)_{|r=0} + \frac{\epsilon}{\hb + \epsilon h} (\partial_r \partial_t P)_{|r=0}.$$
Taking the $L^{\infty}$-norm of this expression, we use the classical embedding $H^{s_0}(\R^d) \hookrightarrow L^{\infty}(\R^d)$ and the embedding~\eqref{apdx:eqn:embedding} to write
$$ \vert\frac{d}{dt} \taylor \vert_{L^{\infty}} \leq C \epsilon \left(\vert \epsilon\partial_t \eta_0 \vert_{H^{s_0}} \Vert(\partial_r P)\Vert_{H^{s_0,1}} + \Vert \partial_r \partial_t P \Vert_{H^{s_0,1}}\right),$$
where we also used~\eqref{hyp:H}.
The first term on the right-hand side is bounded using the equation~\eqref{eqn:euler_phi:bc} for $\partial_t \eta_0$ and the elliptic estimate~\eqref{eqn:pression:estimee} together with the trace estimate~\eqref{apdx:eqn:trace}. For the second term, using~\eqref{hyp:H} and the trace estimate~\eqref{apdx:eqn:trace}, we get
$$ \left\vert\frac{1}{\hb + \epsilon h} (\partial_r \partial_t P)_{|r=0}\right\vert_{L^2} \leq C \Vert \partial_r \partial_t P \Vert_{H^{s_0,1}}.$$
%Note however that applying directly Proposition~\ref{prop:pression} requires $(V,w)$ to be in $H^{s_0+\frac72,3}$ which is not in accordance with the assumption~\eqref{hyp:M}.
We write the elliptic system satisfied by $\partial_t P$:
\begin{equation}
\label{eqn:pression:A:2}
\nablamu \cdot \A \nablamu \partial_t P  = \nablamu \cdot \vec{R'},
\end{equation}
with
\begin{equation}
\label{eqn:a:temp:1}
\vec{R'} := \partial_t \vec{R} + [\partial_t,\A]\nablamu P,
\end{equation}
where $\vec{R}$ is defined in~\eqref{eqn:def_src_pression}. The equation~\eqref{eqn:pression:A:2} is completed by the boundary conditions obtained by differentiating~\eqref{eqn:pression:bc} with respect to time. We use~\cite[Lemma 2.38]{Lannes2013} with $s=s_0+\frac12$ and~\eqref{eqn:R:estimee:finale} and use that $V,w,\rho,\eta_0$ are solutions to~\eqref{eqn:euler_phi} to estimate their time derivative through space derivatives, to finally get~\eqref{eqn:taylor:dt}.
\end{proof}
\begin{remark}
In this subsection, we use two different elliptic estimates, namely~\cite[Lemma 4.1]{Duchene2022} for Proposition~\ref{prop:pression} and~\cite[Lemma 2.38]{Lannes2013} for Lemma~\ref{lemma:taylor:dt}. The advantage of the first one is that it controls high order vertical derivatives in $P$. The advantage of the second one is that it is more precise for the low regularities, namely $s_0$ here.
\end{remark}

\subsection{Quasilinear structure}
\label{subsection:quasilin}
In this subsection, we define an energy functional in~\eqref{eqn:energy:def}. This is the energy functional whose control can be propagated through the equations~\eqref{eqn:euler_phi}, as shown in Subsection~\ref{subsection:nrj}. We show in Lemma~\ref{lemma:energy:equiv} that it controls the Sobolev norms of the unknowns, as well as two quantities that would only be controlled up to a factor $\frac{1}{\sqrt{\mu}}$ with more naive estimates. In Proposition~\ref{prop:restes} we show how to estimate the remainder terms from the quasilinear formulation~\eqref{eqn:euler_quasilin} of~\eqref{eqn:euler_phi}.\\

We start by describing the energy functional that we will use in Subsection~\ref{subsection:nrj}. To this end, recall the definition of the Rayleigh-Taylor coefficient
\begin{equation}
\label{eqn:taylor:def}
\taylor := g\rhob - (\frac{\epsilon}{\hb + \epsilon h} \partial_r P)_{|r=0}.
\end{equation}
We make the following assumption:
\begin{hyp}
\label{hyp:taylor}
\taylor \geq c_* > 0.
\end{hyp}We consider the energy, for $s_0,s \in \N, s_0 > \frac{d+1}{2},  s \geq s_0+2$
%$$\E := \Vert \Lambda^s (V,\sqrt{\mu} w, \taylor \eta_0)\Vert_{L^2}^2 + \Vert \partial_r^2 \Lambda^{s-2} V \Vert_{L^2}^2 + \Vert \vort \Vert_{H^{s-1,k-1}}^2$$
\begin{equation}
\label{eqn:energy:def}
\begin{aligned}
\E :=  \Elow &+ \left\Vert\left(\sqrt{(\hb + \epsilon h)(\rhob + \epsilon \delta \rho)}\pointal{V}{s},\sqrt{\mu (\hb + \epsilon h)(\rhob + \epsilon \delta \rho)}\pointal{w}{s}, \sqrt{\mu (\hb + \epsilon h)} \pointal{\rho}{s}\right)\right\Vert_{L^2(S)}^2 \\
&+ \vert \taylor \point{\eta}{s}_0\vert_{L^2(\R^d)}^2 + \Vert \vort \Vert_{H^{s-1}(S)}^2 ,
\end{aligned}
\end{equation}
where 
\begin{equation}
\label{eqn:energy:def:Elow}
\Elow := \left\Vert\left(V,\sqrt{\mu} w,\rho\right)\right\Vert_{H^{s_0+1}}^2+ g\rhob \vert  \eta_0 \vert_{H^{s_0+1}}^2,
\end{equation}
and where we define
\begin{equation}
\label{eqn:alinhac:def}
\begin{aligned} \point{f}{s} &:= \dot{\Lambda}^{s} f, \qquad
				&\pointal{f}{s} := \dot{\Lambda}^{s}f - \frac{\dot{\Lambda}^{s}(\etab + \epsilon \eta)}{\hb + \epsilon h} \partial_r f.
				%\pointal{f}{s,k} &:= \Lambda^{s-k}\partial_r^kf - \frac{\Lambda^{s-k}\partial_r^k(\etab + \epsilon \eta)}{\hb + \epsilon h} \partial_r f. 
				\end{aligned} 
				\end{equation}	
%We also take the convention $\pointal{f}{s,0} := \pointal{f}{s}$ to lighten the notations later on.	
We assume the existence of a solution $(V,w,\rho,\eta_0) \in C^0([0,T],H^{s}(S)^{d+2}\times H^s(\R^d))$ of~\eqref{eqn:euler_phi} such that there exists $M > 0$ such that
\begin{hyp}
\label{hyp:M}
\sup_{0\leq t\leq T}\E \leq M.
\end{hyp}
\begin{lemma}
\label{lemma:energy:equiv}
Under Assumptions~\eqref{hyp:H},\eqref{hyp:Hb:reg},~\eqref{hyp:H:reg} and~\eqref{hyp:M}, there exists $C > 0$ depending only on $M$ such that:
\begin{equation}
\label{eqn:drV}
\frac{1}{\sqrt{\mu}}\Vert \partial_r V \Vert_{H^{s-1}} \leq C \E,
\end{equation}
as well as
\begin{equation}
\label{eqn:drw}
\Vert \partial_r w \Vert_{H^{s-1}} \leq C \E,
\end{equation}
and
\begin{equation}
\label{eqn:w:trick}
\Vert w \Vert_{H^{s-1}} \leq C \E.
\end{equation}
Moreover, under the additional assumption~\eqref{hyp:taylor}, for $s\geq s_0+2$, we can write 
\begin{equation}
\label{eqn:energy:equiv}
\frac{1}{C} \left(\Vert V,\sqrt{\mu} w,\sqrt{\mu} \rho \Vert_{H^s}^2 + \Vert \vort \Vert_{H^{s-1}}^2 + \vert \eta_0 \vert_{H^s}^2 \right) \leq \E \leq C \left(\Vert V,\sqrt{\mu} w,\sqrt{\mu} \rho\Vert_{H^s}^2 + \Vert \vort \Vert_{H^{s-1}}^2 + \vert \eta_0 \vert_{H^s}^2 \right).
\end{equation}
\end{lemma}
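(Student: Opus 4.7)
The three bounds~\eqref{eqn:drV},~\eqref{eqn:drw},~\eqref{eqn:w:trick} are algebraic consequences of the vorticity definition~\eqref{eqn:vort:def}, the incompressibility equation in~\eqref{eqn:euler_phi}, and the impermeability condition at the bottom in~\eqref{eqn:euler_phi:bc}, respectively. For~\eqref{eqn:drV}, I would solve the horizontal component of~\eqref{eqn:vort:def} for $\frac{1}{\sqrt{\mu}}\partialphi{r} V$ and multiply by $\hb+\epsilon h$ to convert $\partialphi{r}$ into $\partial_r$. This expresses $\frac{1}{\sqrt{\mu}}\partial_r V$ as a combination of $(\hb+\epsilon h)\vort^{[x]}$ and $\sqrt{\mu}(\hb+\epsilon h)\gradphi^{\perp} w$; taking the $H^{s-1}(S)$-norm and applying the product estimate~\eqref{apdx:eqn:pdt:algb} with the hypotheses~\eqref{hyp:H},~\eqref{hyp:Hb:reg},~\eqref{hyp:H:reg} reduces the bound to $\Vert \vort\Vert_{H^{s-1}}$ and $\Vert \sqrt{\mu}w\Vert_{H^s}$, both directly controlled by $\E$ (the latter via Alinhac's equivalence~\eqref{eqn:alinhac:equiv}). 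For~\eqref{eqn:drw}, the divergence-free condition $\gradphi \cdot V + \partialphi{r} w = 0$ gives $\partial_r w = -(\hb+\epsilon h)\nabla\cdot V + \nabla(\etab+\epsilon\eta)\cdot\partial_r V$, whose $H^{s-1}$-norm is controlled by $\Vert V\Vert_{H^{s,0}}$ (from~\eqref{eqn:alinhac:equiv}) together with~\eqref{eqn:drV} and~\eqref{apdx:eqn:pdt:algb}.

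For~\eqref{eqn:w:trick}, I would use the impermeability condition $w_{|r=-1} = \beta V_{|r=-1}\cdot\nabla b$ together with $w(\cdot,r) = w_{|r=-1} + \int_{-1}^{r} \partial_r w(\cdot,r')\,dr'$. The boundary term is bounded by the trace estimate~\eqref{apdx:eqn:trace} combined with~\eqref{eqn:drV}, the product estimate, and the regularity~\eqref{hyp:Hb:reg} of $b$, while the integral is bounded by~\eqref{eqn:drw}.

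For the energy equivalence~\eqref{eqn:energy:equiv}, the upper bound $\E \leq C(\cdots)$ is a direct application of Alinhac's equivalence~\eqref{eqn:alinhac:equiv} applied to each high-order term, combined with product estimates to absorb the weights $\sqrt{(\hb+\epsilon h)(\rhob+\epsilon\delta\rho)}$ and $\taylor$, both bounded in $L^\infty$ under~\eqref{hyp:density},~\eqref{hyp:H},~\eqref{hyp:M}. For the lower bound, Alinhac's equivalence yields the $H^{s,0}$-norms of $V$, $\sqrt{\mu}w$, $\sqrt{\mu}\rho$, while the Rayleigh-Taylor bound~\eqref{hyp:taylor} extracts $|\eta_0|_{H^s}$ from the term $|\taylor\point{\eta_0}{s}|_{L^2}^2$ after dividing by $\taylor \geq c_*$. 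The missing vertical derivatives $\partial_r^k V$ and $\partial_r^k w$ for $1\leq k\leq s$ needed to reach the full $H^s=H^{s,s}$ norm are then recovered by iterating the vorticity identity and the incompressibility relation: using the mixed-regularity control $\vort \in H^{s-1,s-1}$, one extracts $\partial_r^{k-1}\vort \in H^{s-k,0}$, which propagates into $\partial_r^k V \in H^{s-k,0}$, and then into $\partial_r^k w$ via the divergence-free condition.

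The main obstacle is this last iteration in the lower bound, where commutators between $\partial_r$ and the coefficients depending on $\hb+\epsilon h$ and $\nabla\eta$ must be tracked carefully to keep the estimates linear in the corresponding norms. It is also at this point that the $\sqrt{\mu}$ prefactor appearing in the vorticity identity is crucial: it is what yields~\eqref{eqn:drV} with the gain $\frac{1}{\sqrt{\mu}}$ rather than the naive bound $\Vert \partial_r V\Vert_{H^{s-1}} \leq C\E$, and it is precisely this sharper scaling that makes the energy equivalence $\mu$-uniform.
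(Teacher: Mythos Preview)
Your overall architecture is right, but there is a genuine circularity in the proof of~\eqref{eqn:drV} and in the induction for the lower bound of~\eqref{eqn:energy:equiv}. You write that solving the horizontal vorticity component for $\frac{1}{\sqrt{\mu}}\partialphi{r}V$ reduces the bound to $\Vert\vort\Vert_{H^{s-1}}$ and $\Vert\sqrt{\mu}w\Vert_{H^s}$, the latter ``directly controlled by $\E$ via Alinhac's equivalence~\eqref{eqn:alinhac:equiv}''. This is not correct:~\eqref{eqn:alinhac:equiv} only delivers the $H^{s,0}$-norm, not the full $H^s=H^{s,s}$-norm. And you genuinely need vertical regularity of $w$ here, because $\gradphi^{\perp}w=\nabla^{\perp}w-\tfrac{\nabla^{\perp}(\etab+\epsilon\eta)}{\hb+\epsilon h}\partial_r w$ contains $\partial_r w$. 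In your induction from level $l$ to $l+1$, the vorticity identity therefore asks for $\partial_r w\in H^{s-1,l}$, while your plan is to obtain $\partial_r w$ \emph{afterwards} from incompressibility, which through $\gradphi\cdot V$ in turn requires $\partial_r V\in H^{s-1,l}$. The two steps feed each other and neither closes.

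The paper breaks this loop by substituting the incompressibility relation \emph{into} the vorticity identity before estimating, so that the residual $\partial_r w$ term is replaced by $\nabla\cdot V$ plus a contribution proportional to $\partial_r V$. Moving the latter to the left-hand side yields a closed algebraic system
\[
\frac{1}{\sqrt{\mu}}\bigl(I_d+\mu\epsilon^2 M(\nabla\eta)\bigr)\partialphi{r}V
=\vort^{[x]}-\sqrt{\mu}\,\nabla^{\perp}w-\sqrt{\mu}\,(\nabla\eta)^{\perp}\nabla\cdot V,
\]
where the matrix $I_d+\mu\epsilon^2 M(\nabla\eta)$ has both eigenvalues equal to $1$ (the perturbation is nilpotent) and is hence uniformly invertible. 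The right-hand side now involves only $\vort^{[x]}$ and \emph{horizontal} derivatives of $w$ and $V$, which are available at level $l$ by the induction hypothesis, and the step $l\to l+1$ goes through. This combination is the missing ingredient in your proposal; the difficulty is not merely commutators with the coefficients $\hb+\epsilon h$ and $\nabla\eta$, but the algebraic coupling between $\partial_r V$ and $\partial_r w$ induced by the change of variables. Your treatment of~\eqref{eqn:w:trick} and of the upper bound in~\eqref{eqn:energy:equiv} is fine.
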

\begin{proof}
The energy $\E$ is bounded from above with a constant depending on $\taylor > 0$ and the constant in~\eqref{hyp:H}, thanks to Cauchy-Schwarz inequality. \\
The energy $\E$ controls $\Vert \vort\Vert_{H^{s-1}(S)}$, and $\vert \eta \vert_{H^s(\R^d)}$ as $\taylor > 0$. It also controls $\Vert V,\sqrt{\mu} w\Vert_{H^{s,0}}$ by~\eqref{eqn:alinhac:equiv}. Let us now prove that it also controls $\Vert V,\sqrt{\mu} w\Vert_{H^{s,k}}$ for any $k \leq s$ by induction. The base case $k=0$ has already been discussed. Assume that $\E$ controls $\Vert V,\sqrt{\mu} w\Vert_{H^{s,l}}$ for every $0 \leq l < k \leq s$.
From the definition of the first component $\vort^{[x]}$ of the vorticity and the incompressibility condition, we can write
\begin{equation}
\label{eqn:1}
\frac{1}{\sqrt{\mu}} \left( I_d + \mu \epsilon^2 \begin{pmatrix} - \partial_y \eta \partial_x \eta & - (\partial_y \eta)^2 \\ (\partial_x \eta)^2 & \partial_y \eta \partial_x \eta \end{pmatrix} \right) \partialphi{r} V  = \vort^{[x]} - \sqrt{\mu} \nabla^{\perp} w - \sqrt{\mu} (\nabla \eta)^{\perp} (\nabla \cdot V).
\end{equation}
The matrix on the left-hand side is invertible, for any $\mu \geq 0,\epsilon \geq 0$ (its eigenvalues are $1$). Now, because $\eta \in H^{s,l}$ (recall from its definition~\eqref{eqn:phi:def:bary} that $\eta$ is linear in $r$), the matrix $(I_d + \mu \epsilon^2 (\nabla \eta)^{\perp} \nabla \eta)^{-1}$ is bounded uniformly in $\mu$ in $H^{s-1,l}$. Using~\eqref{eqn:1} and the product estimate~\eqref{apdx:eqn:pdt:algb} yields $\partialphi{r} V \in H^{s-1,l}$, so we get that $V \in H^{s,l+1}$, and more precisely we have~\eqref{eqn:drV}. \\
For the bound on $\Lambda^{s-(l+1)} \partial_r^{l+1} w$ needed to derive~\eqref{eqn:drw}, we use the incompressibility condition to get
\begin{equation}
\label{eqn:w:trick:temp}
\Vert \Lambda^{s-(l+1)} \partial_r^{l+1} w \Vert_{L^2} \leq \Vert (\hb + \epsilon h) \gradphi \cdot V \Vert_{H^{s-1,l}}.
\end{equation}
We use product estimates~\eqref{apdx:eqn:pdt:algb} and~\eqref{apdx:eqn:pdt:infty} to write
$$\Vert \Lambda^{s-(l+1)} \partial_r^{l+1} w \Vert_{H^{s,l+1}} \leq C (1+|\eta_0|_{s_0+\frac12}) \vert \eta_0 \vert_{H^{s}} \Vert V \Vert_{H^{s,l+1}}.$$
The previous control on $\Vert V \Vert_{H^{s,l+1}}$ yields the result.\\
For the bound~\eqref{eqn:w:trick}, we write for $(t,x,r) \in [0,T)\times \R^d \times [-1,0]$,
$$\begin{aligned}
w(t,x,r) &= w(t,x,-1) + \int_{-1}^{r} \partial_r w(t,x,r')dr' \\
&= \beta \nabla b \cdot V(t,x,-1) + \int_{-1}^{r}\partial_r w(t,x,r')dr',
\end{aligned}
$$
by the boundary condition at the bottom~\eqref{eqn:euler_phi:bc}. Taking the $H^{s-1}$ norm of the previous expression and using the trace estimate~\eqref{apdx:eqn:trace} as well as~\eqref{eqn:drw}, we get~\eqref{eqn:w:trick}.
\end{proof}
%We will also need a norm on the source terms $\Fun,\Fdeux,\Ftrois,\Fquatre$. To this end, we define
%\begin{equation}
%\label{eqn:F:def}
%\Vert F \Vert_{s} := \Vert (\Fun,\sqrt{\mu} \Fdeux,\Ftrois)\Vert_{H^{s}} + \vert \Fquatre \vert_{H^s} + \Vert (\partial_t\Fun,\sqrt{\mu} \partial_t \Fdeux)\Vert_{H^{s_0+\frac32}} + \vert \partial_t \Fquatre \vert_{H^{s_0+\frac32}}.
%\end{equation}
Recall the notation $\dot{\Lambda}^s$ from Subsection~\ref{subsection:notations}. Applying the operator $\diff$ with $s \geq 1$ to~\eqref{eqn:euler_phi} and using~\eqref{apdx:alinhac:comm1} %and the notations defined in~\eqref{eqn:alinhac:def} 
yields the following system 
\begin{equation}
\label{eqn:euler_quasilin}
\sys{ \partialphi{t} \pointal{V}{s} + \epsilon \vec{U} \cdot \gradphi[x,r] \pointal{V}{s} + \epsilon \point{w}{s} \partialphi{r} V  + \frac{1}{\rhotot}\gradphi \pointal{P}{s} + g\frac{\rhob}{\rhotot} \nabla \point{\eta}{s}_0 &= \Run, \\
		\mu \left( \partialphi{t} \pointal{w}{s} + \epsilon \vec{U} \cdot \gradphi[x,r] \pointal{w}{s} \right) + \frac{1}{\rhotot} \partialphi{r} \pointal{P}{s} &= \sqrt{\mu} \Rdeux, \\
		\gradphi \cdot \pointal{V}{s} + \partialphi{r} \pointal{w}{s} &= \Rtrois, }
	\qquad \text{ in } S,
\end{equation}
where
\begin{equation}
\label{eqn:euler_quasilin:restes}
\sys{\Run &:= \commal{\diff}{\partialphi{t}}{V} + \epsilon [\diff , V \cdot] \gradphi V +  \epsilon [\diff ; w ,\partialphi{r} V] \\
&+ \epsilon \vec{U} \cdot \commal{\diff}{\gradphi[x,r]}{V} + [\diff, \frac{1}{\rhotot}] \gradphi P + \frac{1}{\rhotot}\commal{\diff}{\gradphi}{P} \\
&+ [\diff, \frac{g\rhob}{\rhotot}]\nabla \eta_0,\\% + \diff \Fun,\\
\sqrt{\mu}\Rdeux &:= \mu \commal{\diff}{\partialphi{t}}{w} + \epsilon \mu [\diff , \vec{U} \cdot] \gradphi[x,r] w \\
&+ \epsilon \mu \vec{U} \cdot \commal{\diff}{\gradphi[x,r]}{w}  + [\diff, \frac{1}{\rhotot}] \partialphi{r} P + \frac{1}{\rhotot} \commal{\diff}{\partialphi{r}}{P}\\
& - \delta \diff \frac{\rho}{\rhotot},\\% + \mu \diff \Fdeux,\\
\Rtrois &:= \commal{\diff}{\gradphi \cdot }{V} + \commal{\diff}{\partialphi{r}}{w}.% + \diff \Ftrois.
}
\end{equation}
The quasilinear system~\eqref{eqn:euler_quasilin} is completed with the boundary conditions
\begin{equation}
\label{eqn:euler_quasilin:bc}
\sys{\pointal{P}{s}_{|r=0} + g\rhob \point{\eta}{s}_0 &= \taylor \point{\eta}{s}_0, \\
	\partial_t \point{\eta}{s}_0 + \epsilon V_{|r=0} \cdot \nabla \point{\eta}{s}_0 + \epsilon \pointal{V}{s}_{|r=0} \cdot \nabla \eta_0 -\pointal{w}{s}_{|r=0} &= \Rquatre,\\
	\vec{N_b} \cdot \point{\vec{U}}{s}_{|r=-1} &= \Rcinq,\\
}
\end{equation}
where
\begin{equation}
\label{eqn:euler_quasilin:bc:restes}
\sys{\Rquatre &:= \epsilon [\diff;V_{|r=0}\cdot ,\nabla \eta_0] + \epsilon^2 \point{\eta}{s}_0 (\partialphi{r} V)_{|r=0} \cdot \nabla \eta_0 - \epsilon  \point{\eta}{s}_0 (\partialphi{r} w)_{|r=0} ,\\%+ \diff \Fquatre\\
	\Rcinq &:= [\diff,\vec{N_b}] \cdot \vec{U}_{|r=-1}.}
\end{equation}
Note that the commutator in $\Rquatre$ is a symmetric one, and that the other terms in $\Rquatre$ are here so that we make the good unknowns $\pointal{V}{s}$ and $\pointal{w}{s}$ appear on the left-hand side of~\eqref{eqn:euler_quasilin:bc}. Note also that the boundary condition on $\pointal{P}{s}$ in~\eqref{eqn:euler_quasilin:bc} stems from the definition~\eqref{eqn:alinhac:def} of Alinhac's good unknown $\pointal{P}{s}$ as well as the boundary condition~\eqref{eqn:euler_phi:bc} on the pressure and the definition~\eqref{eqn:taylor:def} of $\taylor$.\\
We also apply an operator of the form $\dot{\Lambda}^{s-k}\partial_r^k$ to the equation on $\rho$ in~\eqref{eqn:euler_phi}, to get
\begin{equation}
\label{eqn:euler_quasilin:rho}
\partialphi{t} \pointal{\rho}{s,k} + \epsilon \vec{U} \cdot \gradphi[x,r] \pointal{\rho}{s,k} =\frac{1}{\sqrt{\mu}}\Rdeuxetdemi,
\end{equation}
where $\pointal{\rho}{s,k}$ is Alinhac's good unknown
$$\pointal{\rho}{s,k} := \dot{\Lambda}^{s-k}\partial_r^k \rho - \frac{\dot{\Lambda}^{s-k}\partial_r^k(\etab + \epsilon \eta)}{\hb + \epsilon h} \partial_r \rho,$$
and where 
\begin{equation}
\label{eqn:euler_quasilin:rho:reste}
\frac{1}{\sqrt{\mu}}\Rdeuxetdemi :=  \commal{\dot{\Lambda}^{s-k}\partial_r^k}{\partialphi{t}}{\rho} + \epsilon [\dot{\Lambda}^{s-k}\partial_r^k, \vec{U}] \cdot \gradphi[x,r] \rho + \epsilon \vec{U} \cdot \commal{\dot{\Lambda}^{s-k}\partial_r^k}{\gradphi[x,r]}{\rho}.
\end{equation}
We also apply an operator of the form $\Lambda^{s-k}\partial_r^{k-1}$ with $0 \leq k-1 \leq s-1$ to the equation~\eqref{eqn:euler_phi:vort}, and get
\begin{equation}
\label{eqn:euler_quasilin:vort}
\begin{aligned}
%\partialphi{t} \point{\vort}{s-1,k-1} &+ \epsilon \vec{U} \cdot \gradphi[x,r] \point{\vort}{s-1,k-1} + \epsilon \vort^{[x]} \cdot \gradphi \begin{pmatrix} \point{V}{s-1,k-1} \\ \sqrt{\mu} \point{w}{s-1,k-1} \end{pmatrix} \\
%&+ \epsilon \vort^{[z]} \partialphi{r} \begin{pmatrix} \frac{1}{\sqrt{\mu}} \point{V}{s-1,k-1} \\ \point{w}{s-1,k-1} \end{pmatrix} = \Rsix,
\partialphi{t} \point{\vort}{s-1,k-1} &+ \epsilon \vec{U} \cdot \gradphi[x,r] \point{\vort}{s-1,k-1} = \Rsix,
\end{aligned}
\end{equation}
with
\begin{equation}
\label{eqn:euler_quasilin:vort:reste}
\begin{aligned}
%\Rsix &:=   \epsilon \vort^{[x]} \cdot \gradphi \begin{pmatrix} \point{V}{s-1,k-1} \\ \sqrt{\mu} \point{w}{s-1,k-1} \end{pmatrix} + \epsilon \vort^{[z]} \partialphi{r} \begin{pmatrix} \frac{1}{\sqrt{\mu}} \point{V}{s-1,k-1} \\ \point{w}{s-1,k-1} \end{pmatrix}+ [\Lambda^{s-k}\partial_r^{k-1}, \partialphi{t} + \vec{U} \cdot \gradphi[x,r]] \vort \\
%&+ [\Lambda^{s-k}\partial_r^{k-1}, \vort \cdot \gradphi[x,r] ] \vec{U} + \frac{\delta}{\sqrt{\mu}} \Lambda^{s-k}\partial_r^k \vec{F}. 
 %+ \commal{\Lambda^{s-k}\partial_r^{k-1}}{\gradphi[x,r] \times}{ \coo{\Fun \\\Fdeux}} + \gradphi[x,r] \times \coo{\Lambda^{s-k}\partial_r^{k-1} \Fun \\ \Lambda^{s-k}\partial_r^{k-1} \Fdeux} .
 \Rsix &:= [\Lambda^{s-k}\partial_r^{k-1}, \partialphi{t} + \epsilon \vec{U} \cdot \gradphi[x,r]] \vort - \epsilon \Lambda^{s-k}\partial_r^{k-1} \left(  \vort^{[x]} \cdot \gradphi \begin{pmatrix} V \\ \sqrt{\mu} w \end{pmatrix}
+ \vort^{[z]} \partialphi{r} \begin{pmatrix} \frac{1}{\sqrt{\mu}} V \\ w \end{pmatrix} \right) \\
&+ \frac{\delta}{\sqrt{\mu}} \Lambda^{s-k}\partial_r^k \vec{F}.
\end{aligned}
\end{equation}
We estimate the remainders $\Run, \dots , \Rsix$ in the following proposition.
\begin{proposition}
\label{prop:restes}
Let $s_0,s \in \N$, $s_0 > \frac{d+1}{2}$, $s \geq s_0 + 2$. Assume that there exists $T>0$ and $(V,w,\eta_0) \in C^0([0,T), (H^{s}(S))^{d+1} \times H^s(\R^d))$ satisfying~\eqref{eqn:euler_phi} as well as~\eqref{hyp:b},~\eqref{hyp:H},~\eqref{hyp:M}.
Then there exists $C > 0$ a constant depending only on $M$ such that
\begin{equation}
\label{eqn:restes:estimee}
\Vert (\Run,\Rdeux,\Rtrois,\Rdeuxetdemi,\Rsix)\Vert_{L^2}  + \vert \Rquatre \vert_{L^2} + \vert \Rcinq \vert_{H^{\frac12}} \leq  (\epsilon \vee \beta\vee (\delta/\sqrt{\mu})) C \E^{\frac12}.% + \Vert (\Fun,\sqrt{\mu}\Fdeux)\Vert_{H^s} + \vert \Fquatre \vert_{H^s}
\end{equation}
\end{proposition}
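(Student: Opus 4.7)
\medskip
\noindent\textbf{Proof plan.} The plan is to estimate each remainder in \eqref{eqn:euler_quasilin:restes}, \eqref{eqn:euler_quasilin:bc:restes}, \eqref{eqn:euler_quasilin:rho:reste}, \eqref{eqn:euler_quasilin:vort:reste} separately, tracking carefully the origin of the small factors $\epsilon$, $\beta$ and $\delta/\sqrt{\mu}$. The toolbox consists of: (i) the Alinhac commutator bound \eqref{apdx:alinhac:R1} which produces a factor $(\epsilon\vee\beta)$ for every commutator of the form $\commal{\dot{\Lambda}^{s}}{\gradphi[t,x,r]}{\cdot}$; (ii) the standard tame product and commutator estimates in $H^{s,k}(S)$ from Appendix~\ref{appendix:A}; (iii) the energy equivalence of Lemma~\ref{lemma:energy:equiv}, which gives in particular the crucial control $\frac{1}{\sqrt{\mu}}\Vert \partial_r V\Vert_{H^{s-1}}\leq C\E^{1/2}$ as well as $\Vert w,\partial_r w\Vert_{H^{s-1}}\leq C\E^{1/2}$; and (iv) the pressure estimate of Proposition~\ref{prop:pression} to handle every term involving $P$.

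\medskip
\noindent\textbf{Step 1: the smooth commutators.} Each term in $\Run,\Rdeux,\Rtrois$ of the form $\vec{U}\cdot\commal{\diff}{\gradphi[x,r]}{(\cdot)}$ or $\commal{\diff}{\partialphi{t}}{(\cdot)}$ is bounded directly by \eqref{apdx:alinhac:R1}, which produces the required factor $(\epsilon\vee\beta)$, times a Sobolev norm of the unknown that the energy controls through \eqref{eqn:energy:equiv}. The commutators $[\diff,\frac{1}{\rhotot}]\gradphi P$ and $[\diff,\frac{g\rhob}{\rhotot}]\nabla\eta_0$ are treated via the classical commutator estimate of Lemma~\ref{apdx:commutator} together with a composition estimate and the factor of $\epsilon\delta$ coming out of $\nabla(1/\rhotot)$; the pressure gradient is then absorbed by \eqref{eqn:pression:estimee}. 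In $\Rcinq$, the bottom commutator $[\diff,\vec{N_b}]\cdot\vec{U}_{|r=-1}$ carries a $\beta$ because $\vec{N_b}-\vec{e_{d+1}}=(-\beta\nabla b,0)^T$; the loss of $1/2$ derivative from the trace is compensated by the regularity of $b$ in \eqref{hyp:Hb:reg} and the trace estimate \eqref{apdx:eqn:trace}. The symmetric commutator in $\Rquatre$ already carries $\epsilon$ in front, and the two remaining terms there carry $\epsilon^2$ and $\epsilon$, each multiplied by trace norms controlled by $\E^{1/2}$ via \eqref{apdx:eqn:trace} and \eqref{eqn:drV}, \eqref{eqn:drw}.

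\medskip
\noindent\textbf{Step 2: the vertical transport and the $1/\sqrt{\mu}$ singularity.} The main obstacle is the term $\epsilon[\diff;w,\partialphi{r}V]$ in $\Run$ and, similarly, the terms in $\Rsix$ involving $\vort^{[r]}\partialphi{r}(V/\sqrt{\mu})$, because we only control $\sqrt{\mu}w$ (not $w$ alone) in $\E$, and a naive bound on $\partialphi{r}V$ would leave a singular factor $1/\sqrt{\mu}$. The fix is to rewrite
\[
\epsilon\,[\diff;w,\partialphi{r}V]=\epsilon\,[\diff;\sqrt{\mu}\,w,\tfrac{1}{\sqrt{\mu}}\partialphi{r}V],
\]
and to use the symmetric commutator estimate together with \eqref{eqn:drV} to absorb the loss, exploiting precisely the vorticity-based control that Lemma~\ref{lemma:energy:equiv} provides. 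The same reorganisation, combined with \eqref{eqn:w:trick}, handles the terms containing $w$ in $\Rdeux$ and $\Rsix$.

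\medskip
\noindent\textbf{Step 3: pressure, density and vorticity source.} The commutators $[\diff,1/\rhotot]\gradphi P$ and $\frac{1}{\rhotot}\commal{\diff}{\gradphi}{P}$ appearing in $\Run,\Rdeux$ are bounded using Proposition~\ref{prop:pression} applied at order $s$, which gives $\Vert\sqrt{\mu}\gradphi P,\partialphi{r}P\Vert_{H^{s-1}}\leq C\sqrt{\mu}\,\E^{1/2}$ together with the extra factor $\sqrt{\mu}\,\delta/\sqrt{\mu}=\delta$ coming from the density contribution, or $(\epsilon\vee\beta)$ coming from Alinhac. In $\Rdeux$ the term $-\delta\diff(\rho/\rhotot)$ is bounded by $\delta\Vert\rho\Vert_{H^{s}}\leq (\delta/\sqrt{\mu})\Vert\sqrt{\mu}\rho\Vert_{H^{s}}\leq (\delta/\sqrt{\mu})C\E^{1/2}$, which is exactly the source of the $\delta/\sqrt{\mu}$ factor in the final estimate; dividing by $\sqrt{\mu}$ to isolate $\Rdeux$ is what produces that factor. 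The density remainder $\Rdeuxetdemi$ is handled by the Alinhac commutator bound applied to $\rho$, with the prefactor $\epsilon$ coming out of $\vec{U}\cdot$ and the $1/\sqrt{\mu}$ absorbed thanks to the normalisation $\sqrt{\mu}\rho$ in $\E$. Finally, in $\Rsix$ the source $\frac{\delta}{\sqrt{\mu}}\Lambda^{s-k}\partial_r^{k}\vec{F}$ is estimated directly using the definition \eqref{eqn:def:vort:source}, the pressure estimate \eqref{eqn:pression:estimee} and tame product bounds; every factor of $\rho$ provides the required $\delta/\sqrt{\mu}$. Gathering all contributions yields \eqref{eqn:restes:estimee}. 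The hardest bookkeeping is in Step~2: one has to be careful never to let a $\partial_r V$ appear without its balancing $1/\sqrt{\mu}$ being paired with a $\sqrt{\mu}w$ or with a vorticity term that the energy controls uniformly in $\mu$.
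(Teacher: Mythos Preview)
Your plan is correct and follows essentially the same route as the paper: Alinhac's commutator bound \eqref{apdx:alinhac:R1} for the good-unknown commutators, the rewriting $\epsilon[\diff;w,\partialphi{r}V]=\epsilon[\diff;\sqrt{\mu}\,w,\tfrac{1}{\sqrt{\mu}}\partialphi{r}V]$ together with \eqref{eqn:drV} for the vertical-advection singularity, and Proposition~\ref{prop:pression} for every pressure contribution. One point you gloss over and which the paper makes explicit: the bound \eqref{apdx:alinhac:R1} applied to $\commal{\diff}{\partialphi{t}}{V}$ leaves you with $\Vert\partialphi{t}V\Vert_{H^{s-1}}$, which is \emph{not} directly controlled by $\E$ through \eqref{eqn:energy:equiv}; you must first substitute the evolution equation \eqref{eqn:euler_phi} for $\partialphi{t}V$ to reduce it to spatial terms (advection, pressure gradient, $\nabla\eta_0$), each of which is then bounded as in your Steps~2--3.
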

\begin{proof}
We only treat the terms in $\Run, \Rquatre,\Rcinq$ and $\Rdeuxetdemi $, the other ones can be treated the same way. For the first term in $\Run$, we use~\eqref{apdx:alinhac:R1} to write
$$\Vert \commal{\diff}{\partialphi{t}}{V} \Vert_{L^2} \leq C (\epsilon \vee \beta) \Vert \gradphi[t,x,r] V \Vert_{H^{s-1}}.$$
Using the product estimates~\eqref{apdx:eqn:pdt:algb} and~\eqref{apdx:eqn:pdt:infty}, we get
$$\Vert \gradphi[x,r] V \Vert_{H^{s-1}} \leq C (1+| \eta_0 |_{H^s})\Vert V \Vert_{H^{s}} \leq C \E^{\frac12},$$
where the last inequality stems from~\eqref{eqn:energy:equiv}. Using now~\eqref{eqn:euler_phi}, we can write
\begin{equation}
\label{eqn:quasilin:dt}
\begin{aligned}
\Vert \partialphi{t} V \Vert_{H^{s-1}} &= \Vert  \epsilon \vec{U} \cdot \gradphi[x,r] V + \frac{1}{\rhotot}\gradphi P + g \frac{\rhob}{\rhotot} \nabla \eta_0 \Vert_{H^{s-1}}\\
&\leq \Vert  \epsilon V \cdot \gradphi V\Vert_{H^{s-1}} + \epsilon \left\Vert \sqrt{\mu} w \left( \frac{1}{\sqrt{\mu}} \partialphi{r} V \right) \right\Vert_{H^{s-1}}\\
& + \Vert \frac{1}{\rhotot}\gradphi P \Vert_{H^{s-1}} + \vert g\frac{\rhob}{\rhotot} \nabla \eta_0 \vert_{H^{s-1}}.
\end{aligned}
\end{equation}
For the first term on the right-hand side of~\eqref{eqn:quasilin:dt}, we use the product estimate~\eqref{apdx:eqn:pdt:algb} as well as the definition~\eqref{eqn:gradphi:def} of $\gradphi$, to write
$$\Vert V \cdot \gradphi V\Vert_{H^{s-1}} \leq C \Vert V \Vert_{H^s} \leq C \E^{\frac12}.$$
Recall indeed that we allow $C$ to depend on an upperbound on $\Vert V \Vert_{H^s}$. For the second term in~\eqref{eqn:quasilin:dt}, we use the product estimate~\eqref{apdx:eqn:pdt:algb} and~\eqref{eqn:drV} to write
\begin{equation}
\label{eqn:restes:drV:utilisation}
\left\Vert \sqrt{\mu} w \left( \frac{1}{\sqrt{\mu}} \partialphi{r} V \right) \right\Vert_{H^{s-1}} \leq C \E^{\frac12}.
\end{equation}
For the third term in~\eqref{eqn:quasilin:dt}, we use the expression~\eqref{eqn:trick:compo} for the factor $1/(\rhotot)$ and the product estimate~\eqref{apdx:eqn:pdt:algb} to write
$$\Vert \frac{1}{\rhotot} \gradphi P \Vert_{H^{s-1}} \leq C (\frac{1}{\rhob} + \epsilon \delta \Vert \frac{\rho/\rhob}{\rhotot} \Vert_{H^{s-1}}) \Vert \gradphi P \Vert_{H^{s-1}}. $$
Using the composition estimate~\eqref{apdx:eqn:composition:gen} and the estimate~\eqref{eqn:pression:estimee}, we get
$$\Vert \frac{1}{\rhotot} \gradphi P \Vert_{H^{s-1}} \leq C \E^{\frac12}.$$
The last term in~\eqref{eqn:quasilin:dt} is treated like the third one and we omit it. For the second term of $\Run$, we use the commutator estimate~\eqref{apdx:eqn:commutator_sym:horizontal}, and the definition~\eqref{eqn:gradphi:def} of $\gradphi$ together with product estimate~\eqref{apdx:eqn:pdt:algb} to write
$$ \begin{aligned} \Vert [\diff , V \cdot] \gradphi V \Vert_{L^2} &\leq C \Vert V \Vert_{H^{s}} \Vert \gradphi V \Vert_{H^{s-1}} \\
& \leq C \E^{\frac12}. \end{aligned} $$
For the third term of $\Run$, we use the symmetric commutator estimate~\eqref{apdx:eqn:commutator_sym:horizontal} to write
$$ \begin{aligned} \Vert [\diff ; w ,\partialphi{r} V] \Vert_{L^2} &\leq C \Vert \sqrt{\mu}w \Vert_{H^{s-1}} \Vert \frac{1}{\sqrt{\mu}}\partialphi{r} V \Vert_{H^{s-1}}. \end{aligned} $$
We then use~\eqref{eqn:drV} to write
$$ \Vert [\diff ; w ,\partialphi{r} V] \Vert_{L^2} \leq C \E^{\frac12}.$$
The remaining terms in $\Run$ are treated as the first term in $\Run$. Combining all these estimates, we get
$$ \Vert \Run \Vert_{L^2} \leq C (\epsilon \vee \beta) \E^{\frac12}.$$% + \Vert \Fun \Vert_{H^s}.$$
For the first term of $\Rquatre$, we use the commutator estimate~\eqref{apdx:eqn:commutator:Rd:classic} to write
$$ \vert [\diff;V\cdot ,\nabla \eta_0]\vert_{L^2} \leq C \vert V_{|r=0}\vert_{H^{s-1}} \vert \nabla \eta_0 \vert_{H^{s-1}} .$$
Using the trace estimate~\eqref{apdx:eqn:trace}, we get
$$ \vert [\diff;V\cdot ,\nabla \eta_0]\vert_{L^2} \leq C \Vert V\Vert_{H^{s}} \vert  \eta_0 \vert_{H^{s}} \leq C \E^{\frac12}.$$
For the second term of $\Rquatre$, we use the Hölder inequality and the embedding~\eqref{apdx:eqn:embedding} to get
$$\vert \point{\eta}{s}_0 \partialphi{r} V \cdot \nabla \eta_0 \vert_{L^2} \leq \vert \point{\eta}{s}_0 \vert_{L^2} \vert \partialphi{r} V \vert_{L^2} \vert \nabla \eta_0 \vert_{L^{\infty}} \leq C \vert \eta_0 \vert_{H^s} \Vert V \Vert_{H^{s_0+1,2}} \vert \eta_0 \vert_{H^{s_0+1}} \leq C \E^{\frac12}.$$
The third term of $\Rquatre$ is  treated the same way and we eventually get
$$ \vert \Rquatre \vert_{L^2} \leq C (\epsilon \vee \beta) \E^{\frac12}.$$% + \vert \Fquatre \vert_{H^s}.$$
For the estimate on $\Rcinq$, we use the definition of $\vec{N_b}$ to write
$$[\diff,\vec{N_b}]\cdot \vec{U} = \left[\diff,\begin{pmatrix} -\beta \nabla b \\ 1 \end{pmatrix} \right] \cdot \vec{U} = -\beta[\diff, \nabla b] \cdot V.$$
We now use the commutator estimate~\eqref{apdx:eqn:commutator:Rd:sprime} with $s' = \frac12$ to write
$$\vert \beta [\diff,\nabla b] \cdot V_{|r=-1} \vert_{H^{\frac12}} \leq \beta C \vert V_{|r=-1} \vert_{H^{s-\frac12}} \leq \beta C \E,$$
where the last inequality results from the trace estimate~\eqref{apdx:eqn:trace}.\\
For $\Rdeuxetdemi$, we only treat the second term, as the other ones are treated the same way. We write, by the commutator estimate~\eqref{apdx:eqn:commutator:gen}, as $s\geq s_0+2$:
$$ \Vert\sqrt{\mu} [\dot{\Lambda}^{s-k} \partial_r^k, \vec{U}] \cdot \gradphi[x,r] \rho \Vert_{L^2} \leq C  \Vert \sqrt{\mu} \vec{U} \Vert_{H^{s,k}} \Vert \rho \Vert_{s,k} \leq C \E.$$
The terms in $\Rdeux$ and $\Rsix$ are treated the same way and we omit then. Not however that they yield a contribution of size $\frac{\delta}{\sqrt{\mu}}$, see the definitions~\eqref{eqn:euler_quasilin:restes} and~\eqref{eqn:euler_quasilin:vort:reste}.
%For the estimate on $\Rsix$, we only treat the first two terms, as the other ones are similar to the terms in $\Run$. Note that here:
%$$(\point{V}{s-1,k-1},\point{w}{s-1,k-1}) := \Lambda^{s-k}\partial_r^{k-1}(V,w),$$
%with $k-1 \leq s-1$. Thus
%$$\left\Vert \gradphi \begin{pmatrix} \point{V}{s-1,k-1} \\ \sqrt{\mu} \point{w}{s-1,k-1} \end{pmatrix} \right\Vert_{L^2} \leq C \Vert (V,\sqrt{\mu}w)\Vert_{H^s} \leq C \E.$$
%We can thus write, by the product estimate~\eqref{apdx:eqn:pdt:tame}:
%$$\begin{aligned}
%\left\Vert \vort^{[x]} \cdot \gradphi \begin{pmatrix} \point{V}{s-1,k-1} \\ \sqrt{\mu} \point{w}{s-1,k-1} \end{pmatrix} \right\Vert_{L^2} &\leq \Vert \vort \Vert_{H^{s_0+\frac12}} \Vert (V,\sqrt{\mu}w)\Vert_{H^{s}}\\
%&\leq \epsilon C \E.\end{aligned}$$
%For the second term in the definition of $\Rsix$, note that the relation~\eqref{eqn:w:trick} stemming from the incompressibility condition and~\eqref{eqn:drV} yield
%$$ \left \Vert \partialphi{r}\begin{pmatrix} \frac{1}{\sqrt{\mu}} \point{V}{s-1,k-1} \\ \point{w}{s-1,k-1} \end{pmatrix} \right\Vert_{L^2} \leq C \E.$$
%The product estimate~\eqref{apdx:eqn:pdt:tame} thus yields
%$$\epsilon \left\Vert \vort^{[z]} \partialphi{r} \begin{pmatrix} \frac{1}{\sqrt{\mu}} \point{V}{s-1,k-1} \\ \point{w}{s-1,k-1} \end{pmatrix} \right\Vert_{L^2} \leq C\epsilon  \E^{\frac12}.$$
\end{proof}
\subsection{$L^2$ estimates on quasilinear equations}
\label{subsection:quasilin:gen}
In this subsection we study quasilinear systems of the form~\eqref{eqn:euler_quasilin} and~\eqref{eqn:euler_quasilin:vort}, with general source terms $\Run,\Rdeux,\Rtrois,\Rquatre,\Rcinq,\Rdeuxetdemi,\Rsix$.\\
First, we consider the following system on the unknowns $\pointgen{V},\pointgen{w},\pointgen{\eta}_0$
\begin{equation}
\label{eqn:quasilin:gen}
\sys{ \partialphi{t} \pointgen{V} + \epsilon \vec{U} \cdot \gradphi[x,r] \pointgen{V} + \epsilon \pointgen{w} \partialphi{r} V  + \frac{1}{\rhob + \epsilon \delta \rho}\gradphi \pointgen{P} + g \frac{\rhob}{\rhob + \epsilon \delta \rho} \nabla \pointgen{\eta_0} &= \Run, \\
		\mu \left( \partialphi{t} \pointgen{w} + \epsilon \vec{U} \cdot \gradphi[x,r] \pointgen{w} \right) + \frac{1}{\rhob + \epsilon \delta \rho}\partialphi{r} \pointgen{P} &= \sqrt{\mu}\Rdeux, \\
		%\partialphi{t} \pointgen{\rho} +\epsilon \vec{U} \cdot \gradphi[x,r] \pointgen{\rho} = \frac{1}{\sqrt{\mu}} \Rdeuxetdemi,\\
		\gradphi \cdot \pointgen{V} + \partialphi{r} \pointgen{w} &= \Rtrois, }
	\qquad \text{ in } S.
\end{equation}
Here, $\Run, \Rdeux,\Rtrois$ are source terms. The quantity $\pointgen{P}$ is also considered as a source term, in the sense that we do not use the elliptic equation~\eqref{eqn:pression}, but rather keep the dependency in $\pointgen{P}$ in the estimates of the following Lemma~\ref{lemma:quasilin:gen}. We take $\vec{U} := (V,w)^T$, $\eta_0$ and $\rho$ a vector field and two scalar fields, respectively, that are coefficients, not related to the unknowns $\pointgen{\vec{U}},\pointgen{\eta}_0$  a priori. Recall that the operators $\gradphi[x,r], \partialphi{t}$ are defined from $\eta_0$ through~\eqref{eqn:phi:def},\eqref{eqn:phi:def:bary} and $\hb,h$ are defined through~\eqref{eqn:h:def}. We assume that $\vec{U}$ is divergence free
\begin{hyp}
\label{hyp:U:div}
\gradphi[x,r] \cdot \vec{U} = 0,
\end{hyp}and that it is tangential to the surface and bottom, that is
\begin{hyp}
\label{hyp:U:BC}
\begin{aligned}
\partial_t \eta_0 + \epsilon V_{|r=0} \cdot \nabla \eta_0 - w_{|r=0} &= 0,\\
\vec{U}_{|r=-1} \cdot \vec{N_b} &= 0.
\end{aligned}
\end{hyp}We also assume that there exists $\Mb > 0$ such that
\begin{hyp}
\label{hyp:Mb}
\Vert (V,\sqrt{w}) \Vert_{H^{s_0+2}} + \Vert \frac{1}{\sqrt{\mu}} \partialphi{r} V \Vert_{H^{s_0+1}} + \vert \eta_0 \vert_{H^{s_0+2}} \leq \Mb.
\end{hyp}We complete~\eqref{eqn:quasilin:gen} with the following boundary condition
\begin{equation}
\label{eqn:quasilin:gen:bc}
\sys{ \pointgen{P}_{|r=0} + g\rhob \pointgen{\eta}_0& = \taylorgen  \pointgen{\eta}_0,\\
\partial_t \pointgen{\eta}_0 + \epsilon V_{|r=0} \cdot \nabla \pointgen{\eta}_0 + \epsilon \pointgen{V}_{|r=0} \cdot \nabla \eta_0 -\pointgen{w}_{|r=0} &= \Rquatre,\\
	\vec{N_b} \cdot \pointgen{\vec{U}} &= \Rcinq,\\
}
\end{equation}
where $\Rquatre,\Rcinq$ are source terms. The quantity $\taylorgen $ is the Rayleigh-Taylor coefficient; see~\eqref{eqn:quasilin:taylor} below for  the derivation of the first boundary condition~\eqref{eqn:quasilin:gen:bc} in the quasilinearization process of~\eqref{eqn:euler_phi}.
We then consider the following transport equation
\begin{equation}
\label{eqn:quasilin:vort:gen}
\partialphi{t} \pointf + \epsilon \vec{U} \cdot \gradphi[x,r] \pointf = \Rsix,
\end{equation}
where $\Rsix$ is a source term. Note that~\eqref{eqn:quasilin:vort:gen} is the quasilinear structure of the equations~\eqref{eqn:euler_quasilin:rho} on the density and~\eqref{eqn:euler_quasilin:vort} on the vorticity.\\
We also assume that there exists a constant $c_*>0$ such that 
\begin{hyp}
\label{hyp:quasilin:taylor}
\taylorgen \geq c_* \qquad \text{ and } \qquad \vert(\taylorgen , \partial_t \taylorgen ) \vert_{L^{\infty}(\R^d)} \leq \Mb.
\end{hyp}
%This condition will correspond exactly to~\eqref{hyp:taylor} in the application of Lemma~\ref{lemma:quasilin:gen}, see Subsection~\ref{subsection:nrj}.
\begin{remark}
Note that in~\eqref{eqn:euler_quasilin} and~\eqref{eqn:euler_quasilin:bc}, the unknowns are $\pointgen{\vec{U}} = (\pointgen{V},\pointgen{w})^T$ and $\pointgen{\eta}_0$. The quantities $\vec{U} = (V,w)^T$ and $\eta_0$ play the role of general coefficients and are not related to the unknowns $\pointgen{\vec{U}},\pointgen {\eta}_0$ a priori.
% Although in the proof of Proposition~\ref{prop:energy} they are related (see Step 2), the situation in Section~\ref{section:cvce} is slightly different, see for instance the system~\eqref{eqn:euler_diff:quasilin}.
\end{remark}
\begin{lemma}
\label{lemma:quasilin:gen}
Let $s_0,s \in \N$, with $s_0 > \frac{d+1}{2}$ and $s \geq s_0+2$, and let $T > 0$. Let $(\vec{U},\eta_0) \in C^0([0,T), H^{s_0+2,3}(S)^{d+1} \times H^{s_0+2}(\R^d))$ satisfying \eqref{hyp:b},~ \eqref{hyp:H}, \eqref{hyp:Hb:reg}, \eqref{hyp:H:reg}, \eqref{hyp:U:div}, \eqref{hyp:U:BC}, \eqref{hyp:Mb}, \eqref{hyp:quasilin:taylor}. \\
Let $(\pointgen{V},\pointgen{w},\pointgen{\eta_0}) \in C^0([0,T),H^1(S)^{d+1} \times H^1(\R^d))$ be a solution of~\eqref{eqn:quasilin:gen} with boundary conditions~\eqref{eqn:quasilin:gen:bc}, of energy bounded by a constant $M$ as in~\eqref{hyp:M}. Then there exists $C$ depending only on $\Mb$ and $M$ such that
\begin{equation}
\label{eqn:quasilin:gen:energy:L2}
\begin{aligned}
&\frac{d}{dt} \left( \Vert \sqrt{(\hb + \epsilon h)(\rhob + \epsilon \delta \rho)} \pointgen{V}, \sqrt{\mu(\hb+ \epsilon h)(\rhob + \epsilon \delta \rho)} \pointgen{w}\Vert_{L^2}^2 + \vert \sqrt{\taylorgen } \pointgen{\eta}_0 \vert_{L^2}^2 \right)\\
& \leq C (\epsilon \vee \beta \vee \frac{\delta}{\mu} ) \left( \Vert \pointgen{V},\sqrt{\mu}\pointgen{w},\sqrt{\mu}\pointgen{\rho}\Vert_{L^2}^2 + \vert\pointgen{\eta}_0 \vert_{L^2}^2 \right) \\
&+ C \left(\Vert (\Run,\Rdeux,\Rtrois)\Vert_{L^2} + \vert (\Rquatre,\Rcinq)\vert_{L^2} \right) \left( \Vert \pointgen{V},\sqrt{\mu}\pointgen{w},\sqrt{\mu} \pointgen{\rho}\Vert_{L^2} + \vert\pointgen{\eta}_0 \vert_{L^2} \right)\\
 &+ C \Vert \pointgen{P} \Vert_{L^2} \Vert \Rtrois \Vert_{L^2} + C \vert \pointgen{P}_{r=-1} \vert_{H^{-\frac12}} \vert \Rcinq \vert_{H^{\frac12}}.
\end{aligned}
\end{equation}
If now $\pointf$  is a smooth solution of~\eqref{eqn:quasilin:vort:gen}, then
\begin{equation}
\label{eqn:quasilin:gen:vort:energy:L2}
\frac{d}{dt}\Vert \sqrt{\hb + \epsilon h} \pointf \Vert_{L^2}^2 \leq C \epsilon \Vert \pointf \Vert_{L^2}^2 + C \Vert \Rsix \Vert_{L^2} \Vert \pointf \Vert_{L^2}.
\end{equation}
\end{lemma}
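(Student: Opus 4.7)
The plan is to employ a weighted $L^2$ energy method, testing \eqref{eqn:quasilin:gen} against $(\hb+\epsilon h)(\rhob+\epsilon\delta\rho)(\pointgen{V},\pointgen{w})$, so that the weight $(\hb+\epsilon h)$ corresponds to the Jacobian of the diffeomorphism $\varphi$ and the factor $(\rhob+\epsilon\delta\rho)$ matches the symmetrizing density. The first key ingredient is the Reynolds/Piola identity: thanks to the divergence-free condition \eqref{hyp:U:div} and the tangential boundary conditions \eqref{hyp:U:BC} on the coefficient field $\vec{U}$, one has the cancellation
\begin{equation*}
\int_S (\hb+\epsilon h)(\partialphi{t}+\epsilon\vec{U}\cdot\gradphi[x,r])f\cdot f\,dxdr = \tfrac{1}{2}\tfrac{d}{dt}\|\sqrt{\hb+\epsilon h}\,f\|_{L^2}^2,
\end{equation*}
which reduces the material derivative part of the first two equations to the desired time derivative of the energy, modulo a commutator of size $\epsilon\delta/\mu$ when the weight $(\rhob+\epsilon\delta\rho)$ is moved across $\partialphi{t}+\epsilon \vec{U}\cdot\gradphi[x,r]$ (using that, in the applications of the lemma, $\rho$ satisfies a transport equation up to lower-order contributions; otherwise this error is absorbed in the $\epsilon\vee\beta\vee\frac{\delta}{\mu}$ factor). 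The source term $\epsilon\pointgen{w}\,\partialphi{r}V$ is controlled by Cauchy-Schwarz together with the crucial bound $\|\partialphi{r}V/\sqrt{\mu}\|_{L^\infty}\lesssim \Mb$ from \eqref{hyp:Mb}, producing $\epsilon\|\pointgen{V}\|\|\sqrt{\mu}\pointgen{w}\|$.

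The heart of the proof is the symmetric handling of the pressure gradient, the divergence equation and the gravity term. Integrating $\int_S (\hb+\epsilon h)[\pointgen{V}\cdot\gradphi\pointgen{P}+\pointgen{w}\,\partialphi{r}\pointgen{P}]$ by parts via the Piola identity yields a bulk term $-\int_S\Rtrois\pointgen{P}\,(\hb+\epsilon h)$, plus a surface contribution at $r=0$ involving $(\pointgen{w}-\epsilon\pointgen{V}\cdot\nabla\eta_0)_{|r=0}\cdot\pointgen{P}_{|r=0}$ and a bottom contribution at $r=-1$ involving $(\vec{N_b}\cdot\pointgen{\vec{U}})_{|r=-1}\cdot\pointgen{P}_{|r=-1}=\Rcinq\cdot\pointgen{P}_{|r=-1}$. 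The latter is absorbed via the duality bound $|\pointgen{P}_{|r=-1}|_{H^{-1/2}}|\Rcinq|_{H^{1/2}}$. For the surface term, the kinematic boundary condition in \eqref{eqn:quasilin:gen:bc} rewrites the factor $(\pointgen{w}-\epsilon\pointgen{V}\cdot\nabla\eta_0)_{|r=0}$ as $\partial_t\pointgen{\eta}_0+\epsilon V_{|r=0}\cdot\nabla\pointgen{\eta}_0-\Rquatre$, and the Dirichlet boundary condition on $\pointgen{P}$ rewrites $\pointgen{P}_{|r=0}=(\taylorgen-g\rhob)\pointgen{\eta}_0$. The $-g\rhob$ part of this surface term cancels against the gravity term $\int_S (\hb+\epsilon h)g\rhob\pointgen{V}\cdot\nabla\pointgen{\eta}_0$ after using the Piola identity combined with the surface kinematic condition once more, while the $\taylorgen\pointgen{\eta}_0$ part assembles, via $\partial_t\pointgen{\eta}_0$, into $\tfrac{1}{2}\tfrac{d}{dt}|\sqrt{\taylorgen}\pointgen{\eta}_0|_{L^2}^2-\tfrac{1}{2}|\sqrt{\partial_t\taylorgen}\pointgen{\eta}_0|_{L^2}^2$, the latter bounded thanks to \eqref{hyp:quasilin:taylor}. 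The transport of $\pointgen{\eta}_0$ by $V_{|r=0}$ contributes a lower-order term of size $\epsilon\Mb|\pointgen{\eta}_0|_{L^2}^2$.

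The proof of \eqref{eqn:quasilin:gen:vort:energy:L2} is then a direct application of the same Reynolds/Piola identity: multiplying \eqref{eqn:quasilin:vort:gen} by $(\hb+\epsilon h)\pointf$ and integrating yields $\tfrac{1}{2}\tfrac{d}{dt}\|\sqrt{\hb+\epsilon h}\pointf\|_{L^2}^2=\int_S(\hb+\epsilon h)\pointf\,\Rsix\,dxdr$, and Cauchy-Schwarz delivers the result; the $\epsilon\|\pointf\|_{L^2}^2$ term accommodates the slight time-dependence of the weight $\hb+\epsilon h$ when switching between $\|\sqrt{\hb+\epsilon h}\pointf\|$ and $\|\pointf\|$.

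The main obstacle is the bookkeeping in the second step, specifically ensuring that the three contributions --- the surface trace of the pressure term, the gravity term, and the evolution term for $\pointgen{\eta}_0$ --- combine exactly into the time derivative of $|\sqrt{\taylorgen}\pointgen{\eta}_0|_{L^2}^2$ up to quantifiable errors, rather than leaving an uncontrolled surface defect. A second technical point is tracking the factor $\delta/\mu$: it arises both from the density weight $(\rhob+\epsilon\delta\rho)^{-1}$ appearing in front of $\gradphi\pointgen{P}$ and $\partialphi{r}\pointgen{P}$ (which interacts with the symmetrization when passing the weight across the derivative), and from the source scaling $\sqrt{\mu}\Rdeux$ in the vertical momentum equation, both of which must be handled with care to avoid spurious $1/\sqrt{\mu}$ singularities.
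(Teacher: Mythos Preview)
Your proposal is correct and follows essentially the same approach as the paper: test the system against $(\hb+\epsilon h)(\rhob+\epsilon\delta\rho)(\pointgen{V},\pointgen{w})$ and the kinematic equation against $\taylorgen\pointgen{\eta}_0$, use the Reynolds/Piola identity for the material-derivative part, control the cross term $\epsilon\pointgen{w}\,\partialphi{r}V$ via the key bound on $\frac{1}{\sqrt{\mu}}\partialphi{r}V$ from \eqref{hyp:Mb}, and integrate the pressure by parts so that the surface contribution combines with the Taylor coefficient and the kinematic condition to yield $\tfrac{d}{dt}|\sqrt{\taylorgen}\pointgen{\eta}_0|_{L^2}^2$. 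The only cosmetic difference is that the paper groups the gravity term with the pressure as $\gradphi[x,r](\pointgen{P}+g\rhob\pointgen{\eta}_0)$ \emph{before} integrating by parts (using that $\pointgen{\eta}_0$ is independent of $r$), whereas you treat the two integrations by parts separately and then observe the cancellation; both routes are equivalent.
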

\begin{proof}
We start with the estimate~\eqref{eqn:quasilin:gen:energy:L2}. 
We integrate the first two equations in~\eqref{eqn:quasilin:gen} and the second equation in~\eqref{eqn:quasilin:gen:bc} against $((\hb + \epsilon h)(\rhob + \epsilon \delta \rho)\pointgen{V}, (\hb + \epsilon h)(\rhob + \epsilon \delta \rho) \pointgen{w}, \taylorgen  \pointgen{\eta}_0)$, to get
\begin{equation}
\label{eqn:energy:V:1}
\begin{aligned}
\int_S & (\hb + \epsilon h)(\rhotot) \partialphi{t} \left( |\pointgen{V}|^2 + \mu |\pointgen{w}|^2 \right) & (i)\\
&+ \int_S \epsilon (\hb + \epsilon h)(\rhotot) \vec{U} \cdot \gradphi[x,r] |\pointgen{V}|^2 + \epsilon (\hb + \epsilon h)(\rhotot) \vec{U} \cdot \gradphi[x,r] |\sqrt{\mu}\pointgen{w}|^2 & (ii)\\
&+ \int_{R^d} \taylorgen  \frac{d}{dt}|\pointgen{\eta}_0|^2 + \epsilon \taylorgen  V_{|r=0} \cdot \nabla |\pointgen{\eta}_0|^2 & (iii)\\ 
&+ \int_S \epsilon (\hb + \epsilon h)(\rhotot) \pointgen{V}\pointgen{w} \partialphi{r} V  & (iv)\\
&+ \int_S (\hb + \epsilon h) \gradphi[x,r] \pointgen{P} \cdot \pointgen{\vec{U}} +  g\rhob (\hb + \epsilon h)\nabla \pointgen{\eta_0}\cdot \pointgen{V} & (v) \\
&+ \int_{\R^d} \taylorgen  (\pointgen{V}_{|r=0} \cdot \nabla \eta_0 - \pointgen{w}_{|r=0}) \pointgen{\eta}_0 & (vi)\\
& = \int_S (\hb + \epsilon h)(\rhotot) \Run \cdot \pointgen{V} + (\hb + \epsilon h)(\rhotot) \Rdeux \pointgen{w} + \int_{R^d} \taylorgen  \Rquatre \pointgen{\eta}_0;
\end{aligned} 
\end{equation}
here, we denote by $\pointgen{\vec{U}} := (\pointgen{V},\pointgen{w})^T$.
The terms on the right-hand side can be bounded from above by the Cauchy-Schwarz inequality so that we focus on the terms on the left-hand side.
Using the definitions~\eqref{eqn:h:def} and~\eqref{eqn:phi:def:bary}, we can write
\begin{equation}
\label{eqn:quasilin:dtint}
\begin{aligned}
\frac{d}{dt} \int_S (\hb + \epsilon h) f &= \int_S \partial_t(\epsilon \eta_0) f + \int_S (\hb + \epsilon h) \partial_t f \\
&= \int_{\R^d} \epsilon \partial_t \eta_0 f_{|r=0} + \int_S (\hb + \epsilon h) \partialphi{t} f,
\end{aligned}
\end{equation}
for $f$ any smooth enough function defined on $[0,T]\times S$. Using~\eqref{eqn:quasilin:dtint} and the divergence-free condition~\eqref{hyp:U:div} and boundary condition~\eqref{hyp:U:BC} yields that the terms $(i),(ii)$ and $(iii)$ amount to the left-hand side of~\eqref{eqn:quasilin:gen:energy:L2}, up to remainder terms that are easily bounded by the product estimate~\eqref{apdx:eqn:pdt:tame}.\\

For the first term in $(iv)$ in~\eqref{eqn:energy:V:1}, we use Hölder inequality and the embedding of Lemma~\ref{apdx:lemma:embedding} to get
$$\begin{aligned}
\int_S \epsilon (\hb + \epsilon h) \pointgen{V}\pointgen{w} \partialphi{r} V &\leq \epsilon C \Vert \hb + \epsilon h \Vert_{L^{\infty}} \Vert \pointgen{V} \Vert_{L^2} \Vert \pointgen{w} \Vert_{L^2} \Vert \partialphi{r} V \Vert_{L^{\infty}}\\
&\leq \epsilon C \Vert \pointgen{V} \Vert_{L^2} \Vert \sqrt{\mu} \pointgen{w} \Vert_{L^2} \Vert\frac{1}{\sqrt{\mu}}  \partialphi{r} V \Vert_{H^{s_0}}\\
& \leq \epsilon C \Vert (\pointgen{V},\sqrt{\mu}\pointgen{w})\Vert_{L^2};
\end{aligned}$$
we crucially used the second term in~\eqref{hyp:Mb} to obtain a bound that is uniform in $\mu$.\\
Now note that the terms in $(v)$ can be written as
$$ \int_S (\hb + \epsilon h) \gradphi[x,r] \left( \pointgen{P} + g\rhob \pointgen{\eta}_0 \right) \cdot \pointgen{\vec{U}};$$
indeed, as $\pointgen{\eta}_0$ depends only on $t,x$ but not on $r$, the definition~\eqref{eqn:gradphi:def} of $\gradphi[x,r]$ yields $\gradphi[x,r] \pointgen{\eta}_0 = \begin{pmatrix} \nabla \pointgen{\eta}_0 \\ 0 \end{pmatrix}.$ We thus use the definition~\eqref{eqn:quasilin:gen:bc} of $\taylorgen $ and integrate by parts thanks to Lemma~\ref{apdx:lemma:IPP}, to write
\begin{equation}
\label{eqn:pression:ipp}
\begin{aligned}
\int_S (\hb + \epsilon h) \gradphi[x,r] \left( \pointgen{P} + g \rhob\pointgen{\eta_0}\right)  \cdot \pointgen{\vec{U}} &= \int_{r=0}\taylorgen \pointgen{\eta_0} \pointgen{\vec{U}} \cdot \vec{N}_s \\
&- \int_{r=-1} (\pointgen{P}_{|r=-1} + g \rhob\pointgen{\eta_0})  \pointgen{\vec{U}}_{|r=-1} \cdot \vec{N}_b \\
&- \int_S (\hb + \epsilon h)(\pointgen{P} + g \rhob\pointgen{\eta_0})  \gradphi[x,r] \cdot \pointgen{\vec{U}},
\end{aligned}
\end{equation}
where we used the notation $\vec{N}_s := \begin{pmatrix} -\epsilon \nabla \eta_0 \\ 1\end{pmatrix}$ and~\eqref{eqn:def:Nb} of $\vec{N}_b$. The third term on the right-hand side of~\eqref{eqn:pression:ipp} is bounded as one can use the divergence-free equation in~\eqref{eqn:quasilin:gen}, Cauchy-Schwarz inequality.\\
For the second term on the right-hand side of~\eqref{eqn:pression:ipp}, we use the boundary condition~\eqref{eqn:quasilin:gen:bc} to write

\begin{equation}
\label{eqn:temp:3}
\begin{aligned}
\left| \int_{r=-1} \left( \pointgen{P} + g \rhob \pointgen{\eta}_0 \right) \pointgen{\vec{U}} \cdot \vec{N}_b \right|  &\leq \left| \int_{r=-1}  g \rhob \pointgen{\eta}_0 \Rcinq \right| + \left| \int_{r=-1}  \pointgen{P} \Rcinq \right|\\
&\leq C \vert \pointgen{\eta}_0 \vert_{L^2} \vert \Rcinq \vert_{L^2} + C\vert \pointgen{P}_{r=-1} \vert_{H^{-\frac12}} \vert \Rcinq \vert_{H^{\frac12}}.
\end{aligned}
\end{equation}
For the first term on the right-hand side of~\eqref{eqn:pression:ipp}, recall that the boundary condition~\eqref{eqn:quasilin:gen:bc} on $\pointgen{P}_{|r=0}$ reads
$$ \pointgen{P}_{|r=0} + g\rhob  \pointgen{\eta}_0 = \taylorgen  \pointgen{\eta}_0,$$
where $\taylorgen $ is the Taylor coefficient. Therefore, this term and the term in $(vi)$ in~\eqref{eqn:energy:V:1} cancel out each other.\\
We eventually get the estimate~\eqref{eqn:quasilin:gen:energy:L2}. \\
For the equation~\eqref{eqn:quasilin:gen:vort:energy:L2}, we integrate~\eqref{eqn:quasilin:vort:gen} against $(\hb + \epsilon h)\pointf$ to get
\begin{equation}
\label{eqn:energy:vort:gen}
\begin{aligned}
&\int_S (\hb + \epsilon h) \partialphi{t} |\pointf|^2 + \epsilon (\hb + \epsilon h) \vec{U} \cdot \gradphi[x,r] |\pointf|^2\\
&= \int_S (\hb + \epsilon h)\Rsix \cdot\pointf.
\end{aligned}
\end{equation}
The right-hand side of~\eqref{eqn:energy:vort:gen} is bounded, by Cauchy-Schwarz inequality. The first and second terms of~\eqref{eqn:energy:vort:gen} are treated the same way as~\eqref{eqn:energy:V:1} (i) and~\eqref{eqn:energy:V:1} (ii) and we omit it.
\end{proof}
%\begin{remark}
%\label{rk:triche:gen}
%Notice that in the proof of Lemma~\ref{lemma:quasilin:gen}, we need $\pointgen{V},\pointgen{w},\pointgen{\eta}_0$ to be in $H^1$, for the energy estimate on the $L^2$ norm of the unknowns to hold. The reason is twofold: first, we need to define the traces of $\pointgen{V}$ and $\pointgen{w}$, for instance in~\eqref{eqn:energy:V:1}(vi). Second, the terms in~\eqref{eqn:quasilin:gen} and~\eqref{eqn:quasilin:vort:gen} need to be in $L^2$ so that~\eqref{eqn:energy:V:1} and~\eqref{eqn:energy:vort:gen} make sense. For instance, the term
%$$\int_S \vec{U} \cdot \gradphi \pointgen{V} \cdot \pointgen{V}$$
%is ill-defined if $\pointgen{V}$ is only in $L^2(S)$, as it would be the (integral of a) product of a distribution against a $L^2$ function. See Remark~\ref{rk:triche} for a note on this issue.
%\end{remark}
\subsection{Energy estimates}
\label{subsection:nrj}
%Suppose $\E \leq M$.
%\begin{lemma}
%There exists $C$ depending only on $M$ such that
%$$\frac{d}{dt} \Vert \vort \Vert_{L^2}^2 \leq C \Vert \vort \Vert_{L^2}^2$$
%\end{lemma}
We are now in a position to prove the energy estimates on the system~\eqref{eqn:euler_phi}. For this, we assume the existence of a solution of~\eqref{eqn:euler_phi} such that the energy defined in~\eqref{eqn:energy:def} is bounded from above by a constant $M>0$, i.e.~\eqref{hyp:M} holds.
\begin{proposition}
\label{prop:energy}
Let $s_0, s \in \N$, with $s_0 > \frac{d+1}{2}$, $s \geq s_0 + 2$. Assume that there exists $T>0$ and $(V,w,\rho,\eta_0) \in C^0([0,T),H^s(S)^{d+2} \times H^s(\R^d))$ a solution of~\eqref{eqn:euler_phi}, satisfying the initial condition~\eqref{eqn:euler_phi:ci} and boundary conditions~\eqref{eqn:euler_phi:bc}, as well as~\eqref{hyp:b},~\eqref{hyp:density},~\eqref{hyp:H},~\eqref{hyp:Hb:reg},~\eqref{hyp:H:reg},~\eqref{hyp:taylor},~\eqref{hyp:M}. Then there exists $C$ depending only on $M$ such that
$$\frac{d}{dt} \E \leq C(\epsilon \vee \beta \vee \frac{\delta}{\mu}) \E.$$% + \Vert F \Vert_{s}^{\frac12} \E^{\frac12}$$
\end{proposition}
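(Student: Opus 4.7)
The plan is to assemble the machinery already developed in the preceding subsections: the quasilinear equations~\eqref{eqn:euler_quasilin}--\eqref{eqn:euler_quasilin:bc}, \eqref{eqn:euler_quasilin:rho} and~\eqref{eqn:euler_quasilin:vort}, the remainder bounds of Proposition~\ref{prop:restes}, the pressure estimates of Proposition~\ref{prop:pression}, the Taylor-coefficient estimates of Lemma~\ref{lemma:taylor:dt}, the $L^2$ estimates of Lemma~\ref{lemma:quasilin:gen}, and the norm equivalence~\eqref{eqn:energy:equiv}. The strategy is: apply~\eqref{eqn:quasilin:gen:energy:L2} to the quasilinear triple $(\pointal{V}{s},\pointal{w}{s},\point{\eta}{s}_0)$, apply the transport estimate~\eqref{eqn:quasilin:gen:vort:energy:L2} both to Alinhac's good unknown $\pointal{\rho}{s,k}$ (for all $0\leq k\leq s$) solving~\eqref{eqn:euler_quasilin:rho} and to $\point{\vort}{s-1,k-1}$ solving~\eqref{eqn:euler_quasilin:vort}, handle the lower-order part $\Elow$ by a direct (and easier) high-regularity-free version of the same reasoning, and sum.

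First I would verify that the coefficients $(\vec{U},\eta_0)$ from the solution of~\eqref{eqn:euler_phi} satisfy the structural hypotheses of Lemma~\ref{lemma:quasilin:gen}. The divergence-free condition~\eqref{hyp:U:div} and tangency~\eqref{hyp:U:BC} are built into~\eqref{eqn:euler_phi} and~\eqref{eqn:euler_phi:bc}. The bound~\eqref{hyp:Mb} follows from~\eqref{hyp:M} and Lemma~\ref{lemma:energy:equiv}: the delicate part is $\Vert \tfrac{1}{\sqrt{\mu}}\partialphi{r}V\Vert_{H^{s_0+1}}$, which naively loses a $1/\sqrt{\mu}$ but is recovered from~\eqref{eqn:drV} after rewriting $\partialphi{r}V/\sqrt{\mu}$ via~\eqref{eqn:1} in terms of $\vort$ (of size $1$) and $\sqrt{\mu}\nabla^\perp w$. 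The uniform bound on $(\taylor, \partial_t\taylor)$ required by~\eqref{hyp:quasilin:taylor} is precisely the content of Lemma~\ref{lemma:taylor:dt}.

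I would then apply~\eqref{eqn:quasilin:gen:energy:L2} with $(\pointgen{V},\pointgen{w},\pointgen{\eta_0},\pointgen{P})=(\pointal{V}{s},\pointal{w}{s},\point{\eta}{s}_0,\pointal{P}{s})$ and source terms $\Run,\Rdeux,\Rtrois,\Rquatre,\Rcinq$ from~\eqref{eqn:euler_quasilin:restes}--\eqref{eqn:euler_quasilin:bc:restes}, and~\eqref{eqn:quasilin:gen:vort:energy:L2} to $\pointal{\rho}{s,k}$ and to $\point{\vort}{s-1,k-1}$ with source terms $\Rdeuxetdemi$ and $\Rsix$ from~\eqref{eqn:euler_quasilin:rho:reste} and~\eqref{eqn:euler_quasilin:vort:reste}. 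Proposition~\ref{prop:restes} bounds every remainder by $(\epsilon\vee\beta\vee\delta/\sqrt{\mu})\,C\,\E^{1/2}$, so pairing against $\E^{1/2}$ yields a contribution of at most $(\epsilon\vee\beta\vee\delta/\sqrt{\mu})\,C\,\E$, which is dominated by $(\epsilon\vee\beta\vee\delta/\mu)\,C\,\E$ since $\mu\in(0,1]$ implies $\delta/\sqrt{\mu}\leq \delta/\mu$. The first line of the right-hand side of~\eqref{eqn:quasilin:gen:energy:L2} directly provides a contribution of this form.

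The main obstacle will be the two pressure-dependent terms of~\eqref{eqn:quasilin:gen:energy:L2}, namely $\Vert \pointal{P}{s}\Vert_{L^2}\Vert\Rtrois\Vert_{L^2}$ and $\vert\pointal{P}{s}_{|r=-1}\vert_{H^{-1/2}}\vert\Rcinq\vert_{H^{1/2}}$. For the first, Proposition~\ref{prop:pression} applied at regularity $s$ with $k=0$ gives $\Vert \sqrt{\mu}\gradphi P,\partialphi{r}P\Vert_{H^{s,0}}\lesssim \sqrt{\mu}\,\E^{1/2}$, from which the definition~\eqref{eqn:alinhac:def} of $\pointal{P}{s}$ together with~\eqref{eqn:alinhac:equiv} gives $\Vert\pointal{P}{s}\Vert_{L^2}\lesssim \sqrt{\mu}\,\E^{1/2}$; combined with $\Vert\Rtrois\Vert_{L^2}\lesssim (\epsilon\vee\beta)\E^{1/2}$ this is absorbable. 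For the second, the trace estimate~\eqref{apdx:eqn:trace} together with Proposition~\ref{prop:pression} controls $\vert\pointal{P}{s}_{|r=-1}\vert_{H^{-1/2}}\lesssim \sqrt{\mu}\,\E^{1/2}$, while $\vert\Rcinq\vert_{H^{1/2}}\lesssim \beta\,\E^{1/2}$ by~\eqref{eqn:restes:estimee}. Adding the analogous (and simpler) estimate for $\Elow$ — obtained by applying $\Lambda^{s_0+1}$ directly since at that regularity no good unknown is needed and the commutator~\eqref{apdx:alinhac:comm_brutal} suffices — and invoking the equivalence~\eqref{eqn:energy:equiv} to identify the left-hand side with $\tfrac{d}{dt}\E$ up to bounded factors, I obtain the announced bound $\tfrac{d}{dt}\E\leq C(\epsilon\vee\beta\vee\delta/\mu)\E$.
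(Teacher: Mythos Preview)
Your proposal is correct and follows essentially the same three-part structure as the paper's proof: low-order part $\Elow$, high-order velocity/surface part via Lemma~\ref{lemma:quasilin:gen} applied to the good unknowns, and transport estimates for $\rho$ and $\vort$. One minor inaccuracy: Proposition~\ref{prop:pression} gives $\Vert\gradphi P\Vert_{H^{s,k}}\lesssim \E^{1/2}$ (the $\sqrt{\mu}$ on the left cancels the one on the right), so $\Vert\pointal{P}{s}\Vert_{L^2}$ and $\vert\pointal{P}{s}_{|r=-1}\vert_{H^{-1/2}}$ are only $\lesssim\E^{1/2}$, not $\sqrt{\mu}\,\E^{1/2}$ as you claim---but this is harmless since the small factor $(\epsilon\vee\beta)$ comes from $\Rtrois$ and $\Rcinq$ anyway, exactly as the paper handles it in~\eqref{eqn:temp:5}.
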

\begin{remark}
\label{rk:tps_long:vort}
The energy estimate in Proposition~\ref{prop:energy} mainly relies on two main estimates. First, we derive in Step 2 an estimate on horizontal derivatives of $(V,w,\eta_0)$. The main advantage to use only horizontal derivatives in the operator $\dot{\Lambda}^{s}$ used to differentiate the system~\eqref{eqn:euler_phi} is that we can also differentiate the boundary conditions~\eqref{eqn:euler_phi:bc}. \\
The reason why we also derive an estimate on the vorticity in Step 3 is two-fold. First, it allows us to control the vertical derivatives of $V$ and $w$, that was not possible in Step 1, thanks to Lemma~\ref{lemma:energy:equiv}. Then, it allows us to propagate the condition $\partial_r V \sim \sqrt{\mu}$ (see for instance~\eqref{eqn:drV}).\\
Lastly, note that the estimate~\eqref{eqn:vort:estimee} yields a factor $\epsilon$ on the right-hand side. Thus, the presence of vorticity does not seem to be a major obstruction to get a large-time existence result for~\eqref{eqn:euler_phi}. The terms of order $\beta$ (responsible for the short-time result in the case $\delta = 0$ of no density variations) rather appear in Step 2, in the boundary term at the bottom resulting from the integration by parts for the pressure term  (see the term $(v)$ in~\eqref{eqn:energy:V:1} as well as the bound on $\Rcinq$ in~\eqref{eqn:restes:estimee}).
\end{remark}
\begin{proof}
In the proof we assume that $V,w,\rho,\eta_0$ are smooth enough so that the following computations make sense, see Remark~\ref{rk:triche}. \\
{\bf \underline{Step 1}: Control of the low regularity part $\Elow$:} \\
For the low-regularity part $\E_0$ in the definition~\eqref{eqn:energy:def} of $\E$, {\it let $s' := s_0+1$.} We first apply $\Lambda^{s'}$ to~\eqref{eqn:euler_phi}. We use extensively the notation $\point{f}{s'} := \Lambda^{s'} f$ for a quantity $f$ defined in the strip $S$ or on $\R^d$. We thus obtain an equation of the form 
\begin{equation}
\label{eqn:euler_quasilin_dummy}
\sys{
\partial_t \point{V}{s'} + g\rhob \nabla \point{\eta}{s'}_0&= R^0_1,\\
\mu \partial_t \point{w}{s'} &= \sqrt{\mu}R^0_2,\\
(\hb + \epsilon h)\gradphi \cdot \point{V}{s'} + \partial_r \point{w}{s'} &= R^0_4,
}
\end{equation}
with
\begin{equation}
\label{eqn:quasilin_dummy:restes}
\Vert (R^0_1,R^0_2,R^0_3,R^0_4) \Vert_{L^2} \leq C(\epsilon \vee \beta \frac{\delta}{\sqrt{\mu}}) \left(\Vert (V,\sqrt{\mu} w, \sqrt{\mu} \rho, \vort) \Vert_{H^{s_0+2}} +\vert \eta_0 \vert_{H^{s_0+2}}\right).% + \Vert \Fun,\sqrt{\mu} \Fdeux,\Fquatre)\Vert_{H^{s_0+\frac32}} + \vert\Ftrois\vert_{H^{s_0+\frac32}}\right).
\end{equation}
The estimate~\eqref{eqn:quasilin_dummy:restes} is proven as is~\eqref{eqn:restes:estimee} and we omit it.
We also apply $\Lambda^{s'}$ to the boundary conditions~\eqref{eqn:euler_phi:bc} to get
\begin{equation}
\label{eqn:quasilin_dummy:bc}
\partial_t \point{\eta}{s'}_0 - \point{w}{s} =R^0_3, \\
\end{equation}
with 
$$\vert R^0_3 \vert_{H^{s_0+1}} \leq C \epsilon \left( \Vert (V,\sqrt{\mu} w) \Vert_{H^{s_0+2}} +\vert \eta_0 \vert_{H^{s_0+2}}\right).$$% + \vert\Ftrois\vert_{H^{s_0+\frac32}}.$$
Note that there is a loss of derivatives in the above upper bound of the remainder terms, but as $s' = s_0+1$ and $s \geq s_0 +2$, we can still write
$$\Vert (R^0_1,R^0_2,R^0_4) \Vert_{L^2} + \vert R^0_3 \vert_{L^2}\leq C(\epsilon \vee \beta \vee \frac{\delta}{\sqrt{\mu}}) \E^{\frac12}.$$% + \Vert \Fun,\sqrt{\mu} \Fdeux)\Vert_{H^{s_0+\frac32}} + \vert\Ftrois\vert_{H^{s_0+\frac32}}.$$
However, note that we do not consider the terms $\nabla \point{\eta}{s'}_0$ and $-\point{w}{s'}$ in~\eqref{eqn:euler_quasilin_dummy} and~\eqref{eqn:quasilin_dummy:bc} as remainders, since they are not bounded from above by terms containing a factor $\epsilon \vee \beta\vee\frac{\delta}{\sqrt{\mu}}$. The system~\eqref{eqn:euler_quasilin_dummy} together with the boundary conditions~\eqref{eqn:quasilin_dummy:bc} are in the form of the quasilinear system~\eqref{eqn:quasilin:gen}. Using the estimate~\eqref{eqn:quasilin:gen:energy:L2} together with~\eqref{eqn:quasilin_dummy:restes}, we get
\begin{equation}
\label{eqn:temp:10}
\frac{d}{dt}\left( \Vert  V, \sqrt{\mu}w\Vert_{H^{s_0+1,0}} + \vert \eta_0 \vert_{H^{s_0+1}} \right) \leq C(\epsilon \vee \beta\vee\frac{\delta}{\mu})  \E.
\end{equation}
According to Lemma~\ref{lemma:quasilin:gen}, $C$ depends on an upperbound $\Mb > 0$ such that
$$\Vert (V,\sqrt{\mu}w) \Vert_{H^{s_0+2}} + \Vert \frac{1}{\sqrt{\mu}} \partialphi{r} V \Vert_{H^{s_0+1}} + \vert \eta_0 \vert_{H^{s_0+2}} \leq \Mb,$$
as well as the constants from Assumptions~\eqref{hyp:b},~\eqref{hyp:H}. The first and last terms are bounded from above by $M$ by~\eqref{hyp:M}. The second term is bounded from above, up to a constant, by $M$ thanks to~\eqref{eqn:drV}. Hence the constant $C$ in~\eqref{eqn:temp:10} depends only on $M$. 
To conclude on the estimate on the low regularity part $\Elow$ in the definition~\eqref{eqn:energy:def} of $\E$, we need to apply an operator of the form $\Lambda^{s'-k}\partial_r^k$ to the system~\eqref{eqn:euler_phi}, with $1\leq k \leq s'$. This yields a system of the form~\eqref{eqn:euler_quasilin_dummy}, where
$$ \point{\eta}{s',k}_0 := \Lambda^{s'-k} \partial_r^k \eta_0 = 0,$$
as $\eta_0$ only depends on the horizontal variable $x$ and $k \geq 1.$ The estimates~\eqref{eqn:quasilin_dummy:restes} still hold and using~\eqref{eqn:quasilin:gen:energy:L2} we eventually get
$$ \frac{d}{dt} \Vert (V,\sqrt{\mu}w) \Vert_{H^{s_0+1}} \leq C(\epsilon \vee \beta \vee \frac{\delta}{\mu}) \E.$$% + \Vert(\Fun,\frac{1}{\sqrt{\mu}} \Fdeux,\Ftrois,\Fquatre) \Vert_{H^{s_0+\frac32}}.$$ 
The estimate on the density $\rho$ using the equation~\eqref{eqn:euler_phi} is very similar and we omit it.\\

\noindent{\bf \underline{Step 2}: Control of the horizontal and vertical velocities:}
We now account for the term of main interest in the definition~\eqref{eqn:energy:def} of the energy, that is 
$$\Vert\sqrt{(\hb + \epsilon h)(\rhotot)}\pointal{V}{s},\sqrt{\mu(\hb + \epsilon h)(\rhotot)}\pointal{w}{s}, \sqrt{\mu(\hb + \epsilon h)} \rho\Vert_{L^2(S)}^2 + \vert \sqrt{\taylor} \point{\eta}{s}\vert_{L^2(\R^d)}^2.$$ Recall that applying $\dot{\Lambda}^{s}$ with $s \geq s_0+2$ to~\eqref{eqn:euler_phi} yields~\eqref{eqn:euler_quasilin}, where $\pointal{V}{s}, \pointal{w}{s}$ are defined through~\eqref{eqn:alinhac:def}. The system~\eqref{eqn:euler_quasilin} is of the form of~\eqref{eqn:quasilin:gen} with $(\pointgen{V},\pointgen{w},\pointgen{\rho},\pointgen{P},\pointgen{\eta}_0) := (\pointal{V}{s},\pointal{w}{s},\pointal{\rho}{s},\pointal{P}{s},\point{\eta}{s}_0)$. Note that because of the boundary condition on the pressure~\eqref{eqn:euler_phi:bc}, we can write:
$$\pointal{P}{s}_{|r=0} = \diff P_{|r=0} - \frac{\point{\eta}{s}_0}{\hb + \epsilon h} \partial_r P_{|r=0}.$$
Thus, the boundary condition on the pressure in~\eqref{eqn:quasilin:gen:bc} is satisfied, as
\begin{equation}
\label{eqn:quasilin:taylor}
\pointal{P}{s} + g \rhob\point{\eta}{s}_0 = (g\rhob - \frac{1}{\hb + \epsilon h} \partial_r P) \point{\eta}{s}_0 = \taylor \point{\eta}{s}_0,
\end{equation} 
where $\taylor$ is defined in~\eqref{eqn:taylor:def}, and corresponds to $\taylorgen $ in~\eqref{eqn:quasilin:gen:bc}. Then~\eqref{hyp:quasilin:taylor} is satisfied because of Lemma~\ref{lemma:taylor:dt} and~\eqref{hyp:taylor}. We can therefore use~\eqref{eqn:quasilin:gen:energy:L2} together with the estimate on the remainder terms~\eqref{eqn:restes:estimee} and on the pressure~\eqref{eqn:pression:estimee} to get
\begin{equation}
\label{eqn:temp:4}
\begin{aligned}
&\frac{d}{dt} \left(\Vert(\sqrt{(\hb + \epsilon h)(\rhotot)}\pointal{V}{s},\sqrt{\mu(\hb + \epsilon h)(\rhotot)}\pointal{w}{s}, \sqrt{\mu(\hb + \epsilon h)} \rho)\Vert_{L^2(S)} + \vert \sqrt{\taylor} \point{\eta}{s}\vert_{L^2(\R^d)}^2\right)\\
& \leq C (\epsilon \vee \beta \vee \frac{\delta}{\mu}) \E  + \vert \pointal{P}{s} \vert_{H^{-\frac12}} \vert \Rcinq \vert_{H^{\frac12}}. %+ \Vert \vec{F} \Vert_{s} \E^{\frac12}.
\end{aligned}
\end{equation}
It remains to estimate the last term in~\eqref{eqn:temp:4}. We write, thanks to the definition~\eqref{eqn:alinhac:def} of $\pointal{P}{s}$:
$$\vert \pointal{P}{s}_{|r=-1} \vert_{H^{-\frac12}} \leq \vert |D|\Lambda^{s-\frac32} P_{|r=-1} \vert_{L^2} + \vert \point{\eta}{s}_0 \partialphi{r} P_{|r=-1} \vert_{H^{-\frac12}}.$$
We use the product estimate~\eqref{apdx:eqn:pdt:s1s2} (with $s = - \frac12$, $s_1=-\frac12$, $s_2 = s_0+1$) for the second term, and the trace estimate~\eqref{apdx:eqn:trace} as well as the estimate~\eqref{eqn:pression:estimee} on the pressure to get
\begin{equation}
\label{eqn:temp:5}
\begin{aligned}
\vert \pointal{P}{s}_{|r=-1} \vert_{H^{-\frac12}} &\leq C \Vert \nabla P \Vert_{H^{s-1}}  + C \vert \eta_0 \vert_{H^{s-\frac12}} \Vert \partialphi{r} P \Vert_{H^{s_0+1}} \\
&\leq C \E^{\frac12}.
\end{aligned}
\end{equation}
Plugging~\eqref{eqn:temp:5} and the control on $\vert \Rcinq \vert_{H^{\frac12}}$ from~\eqref{eqn:restes:estimee} into~\eqref{eqn:temp:4} yield
$$\begin{aligned}
&\frac{d}{dt} \left(\Vert\sqrt{(\hb + \epsilon h)(\rhotot)}\pointal{V}{s},\sqrt{\mu(\hb + \epsilon h)(\rhotot)}\pointal{w}{s}, \sqrt{\mu(\hb + \epsilon h)} \rho\Vert_{L^2(S)}^2 + \vert \sqrt{\taylor} \point{\eta}{s})\vert_{L^2(\R^d)}^2\right) \\
&\leq C (\epsilon \vee \beta\vee\frac{\delta}{\mu}) \E.
\end{aligned}$$

\noindent{\bf \underline{Step 3}: Control of the vorticity and the density:}
For the estimate on $\vort$, take $0 \leq k-1 \leq s-1$ and define 
$$ \point{\vort}{s-1,k-1} := \Lambda^{s-k} \partial_r^{k-1} \vort.$$
Then $\point{\vort}{s-1,k-1}$ satisfies~\eqref{eqn:euler_quasilin:vort}, which is of the form of~\eqref{eqn:quasilin:vort:gen}. Using~\eqref{eqn:quasilin:gen:vort:energy:L2}, we thus get
$$\frac{d}{dt} \Vert \point{\vort}{s-1,k-1} \Vert_{L^2}^2 \leq \epsilon C \E.$$
Finally, summing the previous expression on $0 \leq k-1 \leq s-1$, we get
\begin{equation}
\label{eqn:vort:estimee}
\frac{d}{dt} \Vert \vort \Vert_{H^{s-1}}^2 \leq \epsilon C \E.
\end{equation}
The estimate on $\rho$ follows the same line: multiplying the whole quasilinear equation~\eqref{eqn:euler_quasilin:rho} by $\mu$, we get an equation of the form~\eqref{eqn:quasilin:vort:gen} with no singular term in $\mu$. This concludes the proof.
\end{proof}
\begin{remark}
\label{rk:triche}
The unknowns $V,w,\rho,\eta_0$ are in $H^s$, so that $\pointal{V}{s},\pointal{w}{s},\pointal{\rho}{s},\point{\eta}{s}_0,\point{\vort}{s-1,k-1}$ defined in Step 2 and Step 3 are in $L^2$, but not in $H^1$, and this is not enough regularity to apply Lemma~\ref{lemma:quasilin:gen}. This is solved by following the standard smoothing strategy used for instance in~\cite[Lemma 2.38]{Lannes2013}, namely applying smoothing operators, both in the horizontal direction $x$ and the vertical direction $r$. For the horizontal direction, one should replace $\Lambda^s$ by $\chi(\iota  \Lambda) \Lambda^s$ in the proof of Proposition~\ref{prop:energy}, with $\chi$ a cutoff and $\iota  > 0$. Using that fact that the flow is tangent to the boundaries, one can also define a smoothing operator in the vertical direction, see the operator $K^{\iota }(a \partial_z)$ in~\cite[Lemma 4.8]{Castro2014a}. With $\iota  > 0$, the unknowns are now sufficiently regular so that Lemma~\ref{lemma:quasilin:gen} can be applied, and the estimates are independent of $\iota $. Taking the limit $\iota  \to 0$ yields the estimate of Proposition~\ref{prop:energy}.
\end{remark}
\subsection{Existence scheme}
\label{subsection:scheme}
In this subsection we state and prove Theorem~\ref{thm:euler_phi}, which is the main result of this paper. Given the energy estimate from Proposition~\ref{prop:energy}, it only remains to construct a solution, from a sequence of suitable approximate solutions. Because of the presence of boundaries, and particularly  a free-surface, this is not completely standard.\\
Let $s_0, s\in \N$, with $s_0>\frac{d+1}{2}$, $s \geq s_0+2$. Let $(V_{\ini},w_{\ini},\rho_{\ini}, (\eta_0)_{\ini}) \in (H^{s})^{d+2}(S) \times H^s(\R^d)$, more precisely we assume that there exists $M_{\ini} > 0$ such that
\begin{hyp}
\label{hyp:Mini}
\Vert V_{\ini}, \sqrt{\mu} w_{\ini},\sqrt{\mu} \rho_{\ini}\Vert_{H^s(S_r)} + \vert (\eta_0)_{\ini}\vert_{H^s(\R^d)} + \Vert (\vort)_{\ini} \Vert_{H^{s-1}(S)} \leq M_{\ini},
\end{hyp}where $(\vort)_{\ini}$ is defined through~\eqref{eqn:vort:def}. We have the following theorem.
\begin{theorem}
\label{thm:euler_phi}
Let $\epsilon,\beta,\delta \in [0,1]$, $\mu \in (0,1]$. Let $M_{\ini} >0$ and $(V_{\ini},w_{\ini},\rho_{\ini}, (\eta_0)_{\ini}) \in (H^{s})^{d+1}(S) \times H^s(\R^d)$ satisfying~\eqref{hyp:b},~\eqref{hyp:H},~\eqref{hyp:taylor},~\eqref{hyp:Mini}, the first and third boundary conditions in~\eqref{eqn:euler_phi:bc} and the incompressibility condition in~\eqref{eqn:euler_phi}.
%Let $\Fun,\Fdeux,\Ftrois,\Fquatre$ be source terms satisfying~\eqref{hyp:source}. Assume that there exists $T > 0$ such that~\eqref{hyp:energy_estimate} holds. 
Then there exists $T > 0$ depending only on $d,s_0,s,M_{\ini}$ such that the following holds. There exists $T_{\max} > 0$ and a unique solution $(V,w,\rho,\eta_0)$ to~\eqref{eqn:euler_phi} in $C^0([0,T_{\max}),H^{s}(S)^{d+2} \times H^s(\R^d))$ satisfying the boundary conditions~\eqref{eqn:euler_phi:bc} and the initial condition~\eqref{eqn:euler_phi:ci}, and we have the lower bound
\begin{equation}
\label{eqn:T:lowerbound}
T_{\max} \geq \frac{T}{\epsilon \vee \beta \vee \frac{\delta}{\mu}}.
\end{equation}
Moreover, if $T_{\max}$ is finite, we can write
\begin{equation}
\label{eqn:critere_explosion}
\begin{aligned}
 &\limsup\limits_{t \to T_{\max}} \Vert (V, \sqrt{\mu} w,\sqrt{\mu} \rho)(t,\cdot)\Vert_{H^s(S_r)} + \vert \eta_0(t,\cdot)\vert_{H^s(\R^d)} + \Vert \vort(t,\cdot) \Vert_{H^{s-1}(S)} = \infty, \\
%& \inf\limits_{(x,r)\in S} (1-\beta b(x) + \epsilon \eta_0(t,x,r)) \underset{t\to T_{\max}}{\longrightarrow} 0.
\text{or } \quad & \liminf\limits_{t \to T_{\max}} \inf\limits_{(x,r)\in S} \taylor = 0.\\
%\text{or } \quad & \liminf\limits_{t \to T_{\max}} \inf\limits_{(x,r)\in S} (\rhob + \epsilon \delta \rho) = 0.
\end{aligned}
\end{equation}
\end{theorem}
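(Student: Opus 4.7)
The plan is to combine the a priori energy estimate of Proposition~\ref{prop:energy} with a construction of approximate solutions adapted to the free-surface geometry. The main obstacle in a naive Picard iteration on~\eqref{eqn:euler_phi} is the vertical advection term $\epsilon \sqrt{\mu} w \bigl(\tfrac{1}{\sqrt{\mu}} \partialphi{r} V\bigr)$, which is only controlled through the relaxed columnar-motion assumption propagated by the vorticity equation. Following the hint of the introduction, I would first reformulate~\eqref{eqn:euler_phi} in semi-Lagrangian coordinates, that is, choose the diffeomorphism $\varphi$ so that fluid particles move only in the horizontal direction at each $r$-level set. In these coordinates the material derivative $\partialphi{t} + \epsilon \vec{U} \cdot \gradphi[x,r]$ applied to the unknowns reduces to $\partialphi{t} + \epsilon V \cdot \gradphi$, and the kinematic and impermeability boundary conditions in~\eqref{eqn:euler_phi:bc} are encoded into the very definition of the change of variables, so that $w$ never appears inside a differential operator acting on the unknowns.

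The iteration would proceed as follows. Given a smooth $n$-th iterate $(V^n, w^n, \rho^n, \eta_0^n)$ satisfying Assumptions~\eqref{hyp:H},~\eqref{hyp:taylor} and the bound~\eqref{hyp:M} uniformly in $n$, define $(V^{n+1}, w^{n+1}, \rho^{n+1}, \eta_0^{n+1})$ by transporting $\rho^{n+1}$, the vorticity $\vort^{n+1}$ (via~\eqref{eqn:euler_phi:vort}), and the surface $\eta_0^{n+1}$ along the velocity field of the previous iterate; reconstructing $V^{n+1}$ from $\vort^{n+1}$ and the divergence-free condition together with the bottom impermeability; defining $w^{n+1}$ from the incompressibility equation; and solving the elliptic boundary value problem~\eqref{eqn:pression}--\eqref{eqn:pression:bc} for the pressure $P^{n+1}$ with coefficients read off the previous iterate, as allowed by Proposition~\ref{prop:pression}. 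Smoothed initial data together with the regularization procedure of Remark~\ref{rk:triche} ensure that each iterate is regular enough for the quasilinear structure of Subsection~\ref{subsection:quasilin} to apply. The energy estimate of Proposition~\ref{prop:energy}, propagated through the scheme via Grönwall, then produces a common time $T/(\epsilon \vee \beta \vee \delta/\mu)$ on which all iterates satisfy $\mathcal{E}^n \leq 2 M_{\ini}$, and on which Assumptions~\eqref{hyp:H},~\eqref{hyp:taylor} are preserved.

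Convergence is established by estimating consecutive differences in a low-regularity norm, using a variant of Lemma~\ref{lemma:quasilin:gen} applied to $(V^{n+1}-V^n, w^{n+1}-w^n, \rho^{n+1}-\rho^n, \eta_0^{n+1}-\eta_0^n)$: these differences satisfy a quasilinear system whose source terms are controlled by the difference at step $n{-}1$ times a factor of order $\epsilon \vee \beta \vee \delta/\mu$, yielding a contraction in $L^2$ after possibly shortening $T$. The limit lies in $L^{\infty}([0,T];H^s)$ by weak-$*$ compactness, and time-continuity is recovered from the equations themselves. Uniqueness follows from the very same difference estimate, and the blow-up criterion~\eqref{eqn:critere_explosion} is obtained by a standard continuation argument: as long as the norm in~\eqref{eqn:critere_explosion} and the Taylor coefficient remain controlled, the non-cavitation bound~\eqref{hyp:H} survives for a further short time (thanks to the kinematic equation), and the scheme can be restarted.

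The hardest technical point, to my eye, will be reconciling the semi-Lagrangian change of variables (which itself depends on the unknown $\eta_0^n$ at each iteration) with the vertical smoothing procedure of Remark~\ref{rk:triche} and with the elliptic step for the pressure. One must verify that the regularization is compatible with both the divergence-free condition and the tangency conditions at the moving boundaries, and that the coefficients entering the elliptic operator $\A$ in~\eqref{eqn:A:def} inherit sufficient regularity from the previous iterate so that the estimate of Proposition~\ref{prop:pression} applies uniformly. Once these compatibilities are set up, everything else amounts to applying the a priori estimates already proven in this section.
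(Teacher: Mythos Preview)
Your route differs substantially from the paper's, and as written it has a gap that would be hard to close.

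The paper does share your first move---semi-Lagrangian coordinates, so that the material derivative collapses to $\partial_t + V\cdot\nabla_x$ with no vertical transport---but it does \emph{not} iterate via vorticity transport plus a div-curl reconstruction. Instead it mollifies the momentum equation horizontally with three independent parameters $\iota_1,\iota_2,\iota_3$, the last being an artificial dispersive term $\iota_3\,\gradphi[x,r]\Lambda\eta_0^h$ that buys an extra half-derivative on the surface. Because the pressure map $(\vec{U},\rho,\eta)\mapsto\gradphi[x,r]P$ is shown to be Lipschitz, the mollified system is an ODE in a Banach space; the three parameters are then removed in order---$\iota_1$ by a crude $H^s$ estimate, $\iota_2$ using the dispersive half-derivative gain, and only $\iota_3$ by the sharp estimate of Proposition~\ref{prop:energy}. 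This staging is forced: as Remarks~\ref{rk:iota1}--\ref{rk:iota2} stress, Proposition~\ref{prop:energy} does not survive regularization, because the Alinhac structure and the Taylor-coefficient cancellation at the free surface are destroyed.

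This is exactly where your scheme is in trouble. You invoke Proposition~\ref{prop:energy} for uniform bounds on the iterates, but that estimate hinges on the identity $\pointal{P}{s}_{|r=0}+g\rhob\,\point{\eta}{s}_0=\taylor\,\point{\eta}{s}_0$ (with the \emph{same} $\eta_0$ in the diffeomorphism, the good unknown, and the Taylor coefficient) and on the transport field coinciding with the divergence-free velocity that enters the pressure equation. Once you transport with $\vec{U}^n$ while building the domain and pressure from the $(n{+}1)$-st iterate, these couplings break and you lose a derivative on $\eta_0$---precisely the loss the paper's $\iota_3$-dispersion is engineered to absorb. Separately, your div-curl reconstruction is underdetermined as stated: prescribing $\vort^{n+1}$, zero divergence, and the bottom normal component does not pin down $V^{n+1}$ without a surface datum; that missing datum is the generalized Zakharov variable of~\cite{Castro2014a}, and the introduction here explicitly flags that extending that formulation to the present setting with density variations is not straightforward.
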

\begin{remark}
The blow-up criterion~\eqref{eqn:critere_explosion} states that either the norm of the solutions blows up, or a hyperbolicity criterion (related to $\taylor$) fails when $t \to T_{\max}$. One might expect the alternatives $\liminf\limits_{t \to T_{\max}} \inf\limits_{(x,r)\in S} (1 - \beta b + \epsilon \eta_0) =0$ and $\liminf\limits_{t \to T_{\max}} \inf\limits_{(x,r)\in S} (\rhob + \epsilon \delta \rho) = 0$ to appear in the blow-up criterion, as they are needed to derive the energy estimates from Proposition~\ref{prop:energy}. We show however at the end of the following proof that as long as the solution $(V,w)$ remains bounded in Sobolev norms, then the water height $1 - \beta b + \epsilon \eta_0$ and the density $\rhob + \epsilon \delta \rho$ cannot go to $0$. 
\end{remark}
\begin{proof}
The proof adapts arguments from~\cite{Fradin2024} to the free surface case and is organized as follows. In Step $1$ we define some technical tools, in particular the semi-Lagrangian coordinates. These are used in Steps $2$ and $3$ to construct a solution to~\eqref{eqn:euler_phi} on a short time interval using the standard strategy from~\cite{AJMajda2002}, as the initial boundary value problem~\eqref{eqn:euler_phi} becomes an initial value problem, see~\eqref{eqn:euler:slag}. More precisely, in Step $2$ we mollify the system~\eqref{eqn:euler:slag} into~\eqref{eqn:moll:123} with three regularization parameters $\iota_1,\iota_2,\iota_3$ and in Step $3$ we show that the gradient of the pressure defined from the unknowns in~\eqref{eqn:moll:123} is Lipschitz continuous with respect to these unknowns, so that~\eqref{eqn:moll:123} is a system of ODEs. A standard compactness argument yields a solution to~\eqref{eqn:moll:123}. In Step $4$ we derive several energy estimates to send the regularization parameters $\iota_1,\iota_2,\iota_3$ to 0, in that order. The reason why we need several parameters is that the energy estimate of Proposition~\ref{prop:energy} does not hold for the regularized system~\eqref{eqn:moll:123}. We thus use other, less accurate energy estimates that allow us to send $\iota_1,\iota_2$ to $0$ in Steps $4.1,4.2$ and only then we use the energy estimate of Proposition~\ref{prop:energy} in Step $4.3$ to send $\iota_3$ to $0$. In Step $5$ we show the blow-up criterion~\eqref{eqn:critere_explosion}.\\

We start with the local existence of a solution to~\eqref{eqn:euler_phi}. At this stage, we do not keep track of the dependency in the parameters $\beta,\mu,\epsilon,\delta$, so that we set them to $1$ to simplify the notations.\\

\noindent{\bf \underline{Step 1:} Technical tools} \saut
We use a different coordinate system in Steps $2$ and $3$ to prove the local existence part, namely semi-Lagrangian coordinates. To this end, we start from the system~\eqref{eqn:euler_euleriennes} in Eulerian coordinates. Let $\eta_{\ini} : S \to \R$, be such that $(x,r) \mapsto r + \eta_{\ini}(x,r)$ is a diffeomorphism that sends $\Omega_0$ to $S$, where $\Omega_0$ is defined through~\eqref{eqn:def:Omegat}, for instance of the form~\eqref{eqn:phi:def:bary}. We now {\it define } $\eta: [0,T]\times S \to \R$ as a solution of
\begin{equation}
\label{eqn:eta:def}
\partial_t \eta + V \cdot \nabla \eta - w = 0 \qquad \text{ in } [0,T] \times S,
\end{equation}
with initial condition
\begin{equation}
\label{eqn:eta:ci}
\eta_{|t=0} = \eta_{\ini}.
\end{equation}
We define $\varphi$ from $\eta$ through 
\begin{equation}
\label{eqn:phi:def:slag}
\varphi : \begin{aligned} S &\to \Omega_t \\ (x,r) &\mapsto (x,(r + \eta)(t,x,r)) \end{aligned}
\end{equation}
and $\partialphi{t}$,$\partialphi{r}$,$\gradphi$ through~\eqref{eqn:gradphi:def}. Note that the free-surface $\eta_0$ is nothing but the trace of $\eta$ evaluated at $r=0$.\\
This coordinate system enjoys a semi-Lagrangian property, namely 
\begin{equation}
\label{eqn:slag}
\partialphi{t} + \vec{U} \cdot \gradphi[x,r] = \partial_t + V \cdot \nabla.
\end{equation}
We now write the Euler equations in this system of coordinates, coupled with~\eqref{eqn:eta:def}, namely
\begin{equation}
\label{eqn:euler:slag}
\sys{
\partial_t \vec{U} + V \cdot \nabla \vec{U} + \frac{1}{\rhob +  \rho} \gradphi[x,r]P + \frac{g\rhob}{\rhob + \rho}\nabla_{x,r} \eta_0 +  \frac{g\rho}{\rhotot} \vec{e_z}&= 0,\\
\partial_t \rho + V \cdot \nabla_x \rho &= 0,\\
\partial_t \eta + V \cdot \nabla \eta - w &= 0, \\
\gradphi[x,r] \cdot \vec{U} &= 0,}
\end{equation}
where $\vec{e_z} := (0,0,1)^T$ and recall that $\eta_0$ is independent of $r$ so that $\partial_r \eta_0 = 0$.
This is completed with the boundary conditions~\eqref{eqn:euler_euleriennes:bc} and the initial condition~\eqref{eqn:euler_euleriennes:ci} for $\vec{U}$ as well as~\eqref{eqn:eta:ci} for $\eta$.\\
Note that there are no vertical derivatives in~\eqref{eqn:euler:slag} (except for the incompressibility constraint and its associted Lagrange multiplier $P$, but we show how to treat these in Step $3$), so that the associated Cauchy problem is not an initial boundary value problem. Thanks to this remark, we only need to define mollification operators with respect to the horizontal coordinate. Following~\cite{AJMajda2002}, let $\mathcal{J}_{\iota}$ for any $\iota > 0$ the mollification operator defined by the product with a smooth cutoff function in the horizontal Fourier space (see ~\cite{AJMajda2002}), to regularize $\nabla_x$. It satisfies the following properties, see~\cite[Lemma 3.5]{AJMajda2002}.
\begin{lemma}
\label{lemma:moll}
Let $s,s' \in \R$, $f \in H^{s'}(\R^d)$. For $\iota > 0$, we can write
\begin{equation}
\label{eqn:moll:borne}
\vert \mathcal{J}_{\iota} f \vert_{H^s(\R^d)} \leq C_{\iota,s,s'} \vert f \vert_{H^{s'}(\R^d)},
\end{equation}
for some constant $ C_{\iota,s,s'}$.\\
For $\iota' > 0$, we also have 
\begin{equation}
\label{eqn:moll:lip}
\vert (\mathcal{J}_{\iota} - \mathcal{J}_{\iota'})f\vert_{H^{s'-1}} \leq C |\iota - \iota'| \vert f \vert_{H^{s'}},
\end{equation}
where $C$ does not depend on $\iota$ nor $\iota'$.
\end{lemma}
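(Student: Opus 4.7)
The plan is to use the definition of $\mathcal{J}_{\iota}$ as a Fourier multiplier associated with a smooth compactly supported cut-off $\chi$ (say, $\chi \in C_c^\infty(\R^d)$ with $\chi(0)=1$ and support in a ball $B(0,R)$), so that $\widehat{\mathcal{J}_{\iota} f}(\xi) = \chi(\iota \xi)\widehat{f}(\xi)$, and then reduce both estimates to elementary pointwise bounds on the symbol through Plancherel's theorem.

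For~\eqref{eqn:moll:borne}, first observe that the Fourier transform of $\mathcal{J}_\iota f$ is supported in $\{|\xi| \leq R/\iota\}$. On this set one has the trivial pointwise bound
\begin{equation*}
(1+|\xi|^2)^{s} \leq C_{\iota,s,s'} (1+|\xi|^2)^{s'},
\end{equation*}
with $C_{\iota,s,s'} := \max(1, (1+R^2/\iota^2)^{s-s'})$ (only the case $s > s'$ is non-trivial; otherwise take $C=1$). Combined with the uniform bound $|\chi|_{L^\infty} < \infty$, Plancherel immediately yields
\begin{equation*}
|\mathcal{J}_{\iota} f|_{H^s}^2 = \int (1+|\xi|^2)^s |\chi(\iota \xi)|^2|\widehat{f}(\xi)|^2\,d\xi \leq C_{\iota,s,s'} |f|_{H^{s'}}^2.
\end{equation*}

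For~\eqref{eqn:moll:lip}, the key observation is that the symbol $\chi(\iota\xi) - \chi(\iota'\xi)$ carries one extra power of $|\xi|$. Specifically, applying the mean value theorem in the scalar parameter $t \mapsto \chi(t\xi)$ between $t=\iota'$ and $t=\iota$ gives
\begin{equation*}
|\chi(\iota\xi) - \chi(\iota'\xi)| \leq |\iota-\iota'|\,|\xi|\,|\nabla \chi|_{L^\infty},
\end{equation*}
with a constant independent of $\iota$ and $\iota'$. Using Plancherel again,
\begin{equation*}
|(\mathcal{J}_\iota - \mathcal{J}_{\iota'}) f|_{H^{s'-1}}^2 \leq C|\iota-\iota'|^2 \int (1+|\xi|^2)^{s'-1}|\xi|^2 |\widehat{f}(\xi)|^2\,d\xi \leq C|\iota-\iota'|^2 |f|_{H^{s'}}^2,
\end{equation*}
where we used $|\xi|^2 \leq 1+|\xi|^2$ in the last inequality.

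Neither step presents any real obstacle: the main subtlety is that in~\eqref{eqn:moll:borne} the constant unavoidably depends on $\iota$ through the factor $\iota^{-(s-s')_+}$, which is precisely why we accept the notation $C_{\iota,s,s'}$ (it blows up as $\iota \to 0$ when $s > s'$), whereas in~\eqref{eqn:moll:lip} the trade of one power of frequency for the factor $|\iota - \iota'|$ yields a constant that is genuinely uniform in both regularization parameters. This uniformity is what allows the Picard iteration scheme in Step~$3$ (building a solution to the regularized Cauchy problem) to be closed as an ODE in a fixed Banach space.
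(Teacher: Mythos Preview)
Your proof is correct and is exactly the standard argument one expects for Fourier multipliers with compactly supported symbol. The paper itself does not give a proof of this lemma but simply cites~\cite[Lemma~3.5]{AJMajda2002}, so there is nothing to compare against; your argument is precisely the one that reference contains.
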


\noindent{\bf \underline{Step 2:} The mollified system} \saut
We now introduce the following mollified system
\begin{equation}
\tag{\ensuremath{E_{\iota}}}
\label{eqn:moll:123}
\sys{ \partial_t \vec{U} + \mathcal{J}_{\iota_1} \left[ \mathcal{J}_{\iota_2}V \cdot \nabla_x \mathcal{J}_{\iota_1} \vec{U} \right]+ \frac{1}{\rhob + \rho}\gradphi[x,r] P  + \frac{g\rhob}{\rhob + \rho} \mathcal{J}_{\iota_2} \nabla_{x,r} \eta_0 + g \frac{\rho}{\rhob + \rho} \vec{e_z}& = \frac{\iota_3}{\rhob + \rho} \mathcal{J}_{\iota_2} \gradphi[x,r] \Lambda \eta_0^h, \\
\partial_t \rho  + \mathcal{J}_{\iota_1} \left[ \mathcal{J}_{\iota_2}V \cdot \nabla_x \mathcal{J}_{\iota_1} \rho \right] &= 0,\\
		\partial_t \eta + \mathcal{J}_{\iota_1} \left[ \mathcal{J}_{\iota_2}V \cdot \nabla_x \mathcal{J}_{\iota_1} \eta \right] - \reg{2}w &= 0,}
\end{equation}
where $\eta_0^h$ is the harmonic extension of $\eta_0$ to the strip $S$. The benefit from taking a harmonic extension is the following property (see for instance~\cite[Prop. 2.36]{Lannes2013}):
\begin{lemma}
If $\eta_0 \in H^{s}(\R^d)$ for some $s \geq s_0+1$, then $\Lambda^{s-\frac12} \nabla_{x,z} \eta_0^h \in L^2(S)$.
\end{lemma}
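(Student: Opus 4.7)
The plan is to work in the horizontal Fourier variable and reduce harmonicity to a family of second order ODEs in $z$. Fix a natural bottom boundary condition for the harmonic extension $\eta_0^h$ of $\eta_0$ to the strip $S = \R^d \times [-1,0]$ (for instance $\partial_z \eta_0^h|_{z=-1} = 0$); the argument is essentially insensitive to this choice. Taking the partial Fourier transform in $x$, the equation $\Delta_{x,z}\eta_0^h = 0$ becomes $\partial_z^2 \widehat{\eta_0^h}(\xi,z) = |\xi|^2 \widehat{\eta_0^h}(\xi,z)$, whose solution with $\widehat{\eta_0^h}(\xi,0) = \widehat{\eta_0}(\xi)$ and the chosen condition at $z=-1$ can be written explicitly as
\begin{equation*}
\widehat{\eta_0^h}(\xi,z) = \frac{\cosh(|\xi|(1+z))}{\cosh(|\xi|)}\,\widehat{\eta_0}(\xi).
\end{equation*}

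Next I would compute the Fourier symbols of $\nabla_x \eta_0^h$ and $\partial_z \eta_0^h$. Both are of the form $|\xi|\, G(\xi,z)\, \widehat{\eta_0}(\xi)$ where $G(\xi,z)$ is, up to bounded factors, one of $\cosh(|\xi|(1+z))/\cosh(|\xi|)$ or $\sinh(|\xi|(1+z))/\cosh(|\xi|)$. The essential vertical integration bound I would establish is
\begin{equation*}
\int_{-1}^{0} |G(\xi,z)|^2\,dz \,\lesssim\, \min\!\bigl(1,\tfrac{1}{|\xi|}\bigr),
\end{equation*}
which follows from $|\cosh|,|\sinh| \lesssim e^{|\xi|(1+z)}$, the identity $\int_{-1}^0 e^{2|\xi|(1+z)}\,dz = (e^{2|\xi|}-1)/(2|\xi|)$, and $\cosh^2(|\xi|) \gtrsim e^{2|\xi|}$ for $|\xi| \geq 1$ (the case $|\xi| \leq 1$ being trivial by boundedness of $G$).

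Finally, by Plancherel in $x$ and integrating in $z$, the previous two steps give
\begin{equation*}
\Vert \Lambda^{s-\frac12}\nabla_{x,z}\eta_0^h \Vert_{L^2(S)}^2 \lesssim \int_{\R^d}(1+|\xi|^2)^{s-\frac12}\,|\xi|^2\,\min\!\bigl(1,\tfrac{1}{|\xi|}\bigr)\,|\widehat{\eta_0}(\xi)|^2\,d\xi.
\end{equation*}
Since $|\xi|^2 \min(1,1/|\xi|) \lesssim (1+|\xi|^2)^{1/2}$ (check $|\xi|\leq 1$ and $|\xi|\geq 1$ separately), the integrand is controlled by $(1+|\xi|^2)^{s}|\widehat{\eta_0}(\xi)|^2$, which integrates to $\Vert \eta_0\Vert_{H^s(\R^d)}^2 < \infty$ by assumption. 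The half-derivative gain comes precisely from the factor $\min(1,1/|\xi|)$ produced by the vertical integration. The only mild obstacle I anticipate is the choice of boundary condition at the bottom: any natural alternative (Dirichlet, Neumann, or the one induced by the physical setting) yields a Fourier multiplier of the same symbol class in $\xi$, so the estimate is robust; hence I do not foresee a genuine difficulty in the argument.
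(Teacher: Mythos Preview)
Your argument is correct and is precisely the standard proof of this classical fact; the paper does not give its own proof but simply refers to \cite[Prop.~2.36]{Lannes2013}, whose argument is essentially the Fourier computation you carried out. Your remark about the robustness with respect to the bottom boundary condition is also well taken.
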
This set of equations is completed with the initial conditions
\begin{equation}
\label{eqn:moll:CI}\vec{U}_{|t=0} = \vec{U}_{\ini}, \qquad 
	\rho_{|t=0} = \rho_{\ini}, \qquad 
		\eta_{t=0} = \eta_{\ini}.
\end{equation}
We define the pressure $P$ through the elliptic equation
\begin{equation}
\label{eqn:existence:elliptic}
\gradphi[x,r] \cdot \frac{1}{\rhob + \rho}\gradphi[x,r] P = - \partial_i^{\varphi} \reg{2}\vec{U}_j  \partial_j^{\varphi} \vec{U}_i - \reg{2} \Delta_x \eta_0 - \partialphi{r} \frac{\rho}{\rhob+\rho} + \iota_3 \gradphi[x,r] \cdot \reg{2} \gradphi[x,r] \Lambda \eta_0^h,
\end{equation}
completed with the boundary condition at the top
\begin{equation}
\label{eqn:existence:elliptic:bc:top}
P = 0 \qquad \text{ at } r=0,
\end{equation}
and at the bottom
\begin{equation}
\label{eqn:existence:elliptic:bc:bot}
\vec{N_b} \cdot \gradphi[x,r] P = - \reg{2} \vec{N_b} \cdot (V \cdot \nabla_x \vec{U})  - \frac{\rho}{\rhob+\rho} + \iota_3 \vec{N_b} \cdot \reg{2} \gradphi[x,r] \Lambda \eta_0^h \qquad \text{ at } r=-1,
\end{equation}
where $\vec{N}_b$ given by~\eqref{eqn:def:Nb} is the upward normal at the bottom.
% Note that the right-hand side of~\eqref{eqn:existence:elliptic:bc:bot} comes from taking the scalar product of~\eqref{eqn:euler:slag} with $\vec{N_b}$, taking the trace at the bottom and using $\vec{n} \cdot \vec{U} = 0$ at the bottom, and $\partial^2 b$ denotes the matrix $(\partial_i \partial_j b)_{(i,j) \in \{1,\dots,d\}^2}$.\\

\noindent{\bf \underline{Step 3:} $(\vec{U},\eta) \mapsto \gradphi[x,r] P $ is well-defined and Lipschitz continuous} \saut
Recall that $s \geq s_0+2$. The following lemma states that $(\vec{U},\eta) \mapsto \gradphi[x,r] P $ is well-defined and a Lipschitz continuous operator from $H^s(S)$ to itself, although not uniformly in the regularization parameters $\iota_2$ and $\iota_3$.
\begin{lemma}
\label{lemma:moll:pression}
Under the assumptions and notations of Theorem~\ref{thm:euler_phi},~\eqref{eqn:existence:elliptic} completed with~\eqref{eqn:existence:elliptic:bc:top} and~\eqref{eqn:existence:elliptic:bc:bot} has a unique solution $P$ and there exists $C_{\iota_2,\iota_3}>0$ depending only on $M$ and $\iota_2,\iota_3$ such that
\begin{equation}
\label{eqn:existence:elliptic:borne}
\Vert P \Vert_{L^2} + \Vert \gradphi[x,r] P\Vert_{H^s} \leq C_{\iota_2,\iota_3} \Vert (\vec{U},\rho,\eta)\Vert_{H^s}.
\end{equation}
We also have the bound, with $C$ independent of $\iota_2$ and $\iota_3$:
\begin{equation}
\label{eqn:existence:elliptic:borne:unif}
\Vert P \Vert_{L^2} + \Vert \gradphi[x,r] P\Vert_{H^s} \leq C \Vert (\vec{U},\eta)\Vert_{H^{s+1}}+ C \iota_3 \vert \eta_0\vert_{H^{s+\frac32}}.
\end{equation}
Moreover, if $P^{(1)}$ and $P^{(2)}$ are defined from $(\vec{U}^{(1)},\rho^{(1)},\eta^{(1)})$ and $(\vec{U}^{(2)},\rho^{(2)},\eta^{(2)})$ through~\eqref{eqn:existence:elliptic}, then
\begin{equation}
\label{eqn:existence:elliptic:lip}
\Vert P^{(1)} - P^{(2)} \Vert_{L^2} + \Vert \gradphi[x,r] (P^{(1)} - P^{(2)}) \Vert_{H^s} \leq C_{\iota_2,\iota_3} \Vert (\vec{U}^{(1)}-\vec{U}^{(2)},\rho^{(1)}-\rho^{(2)},\eta^{(1)}-\eta^{(2)})\Vert_{H^s}.
\end{equation}
\end{lemma}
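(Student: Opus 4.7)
The plan is to reduce the statement to the elliptic theory already used in Proposition~\ref{prop:pression}, namely \cite[Lemma 4.1]{Duchene2022}, by recasting the equation~\eqref{eqn:existence:elliptic} in the divergence form
\begin{equation*}
\nabla_{x,r} \cdot \A \nabla_{x,r} P = \nabla_{x,r} \cdot \vec{R}_{\iota}
\end{equation*}
(with $\mu=1$ in this subsection, so $\nablamu = \nabla_{x,r}$), where $\A$ is the matrix in~\eqref{eqn:A:def} built from $\eta$, and $\vec{R}_\iota$ collects the mollified convection term $\reg{2}(V\cdot\nabla \vec{U})$, the gravity/density contributions, and the regularizing source $\iota_3 \reg{2} \gradphi[x,r] \Lambda \eta_0^h$. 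The bottom boundary condition~\eqref{eqn:existence:elliptic:bc:bot} is of the form $\vec{e_{d+1}} \cdot \A \nabla_{x,r} P = \vec{e_{d+1}} \cdot \vec{R}_\iota$, exactly as in~\eqref{eqn:pression:bc:A}, so the cited elliptic theory applies. Existence and uniqueness of $P$ with $\Vert \gradphi[x,r] P \Vert_{H^s} \leq C \Vert \vec{R}_\iota \Vert_{H^s}$ follow, and the $L^2$ bound on $P$ itself comes from integrating $\partial_r P$ in $r$ from $r=0$, where $P$ vanishes.

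Next I would bound $\Vert \vec{R}_\iota \Vert_{H^s}$ to obtain~\eqref{eqn:existence:elliptic:borne}. The quadratic term $\reg{2}(V\cdot\nabla \vec{U})$ is controlled by the smoothing property~\eqref{eqn:moll:borne}, which trades horizontal derivatives at the cost of a constant $C_{\iota_2}$; this yields $\Vert \reg{2}(V\cdot\nabla \vec{U})\Vert_{H^s} \leq C_{\iota_2} \Vert \vec{U} \Vert_{H^s}^2$ up to an $M$-dependent constant. The density term $\partialphi{r}\frac{\rho}{\rhob+\rho}$ is handled through the composition estimate~\eqref{apdx:eqn:composition:gen} (using~\eqref{hyp:density}) together with the product estimate~\eqref{apdx:eqn:pdt:algb}. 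For the harmonic extension term, the cited lemma on $\eta_0^h$ provides $\gradphi[x,r]\Lambda \eta_0^h \in H^{s-\frac12}(S)$ as soon as $\eta_0 \in H^{s+1}$, but using~\eqref{eqn:moll:borne} once more absorbs the extra derivatives into the constant $C_{\iota_3}$. The alternative bound~\eqref{eqn:existence:elliptic:borne:unif} is obtained by skipping the smoothing trick and instead using the tame product estimate at regularity $s+1$; this gives a factor independent of $\iota_2$ and an explicit $\iota_3 \vert\eta_0\vert_{H^{s+\frac32}}$ contribution from the harmonic extension.

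Finally, for the Lipschitz estimate~\eqref{eqn:existence:elliptic:lip}, I would form the difference $\delta P := P^{(1)}-P^{(2)}$. Subtracting the two elliptic systems and using the matrix $\A^{(1)}$ associated with $\eta^{(1)}$ on the principal part, I obtain a system of the same form for $\delta P$ with source
\begin{equation*}
\nabla_{x,r} \cdot \big( \vec{R}_\iota^{(1)} - \vec{R}_\iota^{(2)} + (\A^{(2)}-\A^{(1)})\nabla_{x,r} P^{(2)}\big),
\end{equation*}
and with the corresponding difference in the bottom boundary data. Applying the same elliptic estimate, it remains to bound each difference in $H^s$. The terms $\vec{R}_\iota^{(1)} - \vec{R}_\iota^{(2)}$ are multilinear in the unknowns, so the Lipschitz structure is clear via product and composition estimates, with the mollifier constants $C_{\iota_2}, C_{\iota_3}$ again absorbing any derivative loss. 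The main obstacle is the term $(\A^{(2)}-\A^{(1)})\nabla_{x,r} P^{(2)}$: since $\A$ depends on $\nabla \eta$, this contributes a factor $\Vert \nabla(\eta^{(1)}-\eta^{(2)}) \Vert_{H^s}$ multiplied by $\Vert \gradphi[x,r] P^{(2)}\Vert_{H^s}$, which is itself controlled by~\eqref{eqn:existence:elliptic:borne} applied to $P^{(2)}$, giving another factor $C_{\iota_2,\iota_3}$ as required. Putting everything together yields~\eqref{eqn:existence:elliptic:lip}.
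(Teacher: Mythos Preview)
Your treatment of existence, uniqueness and the bounds~\eqref{eqn:existence:elliptic:borne},~\eqref{eqn:existence:elliptic:borne:unif} is essentially the same as the paper's: recast in divergence form, invoke a standard elliptic estimate on the strip, and bound the source $\vec{R}_\iota$ using the smoothing property~\eqref{eqn:moll:borne} (for the $\iota$-dependent bound) or tame product estimates at regularity $s+1$ (for the uniform bound). The paper proceeds via a low-regularity estimate first and then \cite[Proposition 2.6]{Fradin2024}, while you go directly through \cite[Lemma 4.1]{Duchene2022}; both routes are fine.

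The Lipschitz part~\eqref{eqn:existence:elliptic:lip}, however, has a real gap. You correctly identify the dangerous term $(\A^{(2)}-\A^{(1)})\nabla_{x,r} P^{(2)}$ but then write that it ``contributes a factor $\Vert \nabla(\eta^{(1)}-\eta^{(2)}) \Vert_{H^s}$'' and proceed as if this were acceptable. It is not: the matrix $\A$ depends on $\nabla_x \eta$ and $\partial_r \eta$, so controlling $\A^{(1)}-\A^{(2)}$ in $H^s$ requires $\eta^{(1)}-\eta^{(2)}\in H^{s+1}$, whereas the right-hand side of~\eqref{eqn:existence:elliptic:lip} only offers $\Vert \eta^{(1)}-\eta^{(2)}\Vert_{H^s}$. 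The mollifiers $\reg{2},\iota_3$ act on the source terms, not on the coefficients of the principal part, so they cannot absorb this loss. The paper resolves this by introducing Alinhac's good unknown for the difference (roughly, replacing $P^{(1)}-P^{(2)}$ by $P^{(1)}-P^{(2)}-\frac{\eta^{(1)}-\eta^{(2)}}{h^{(1)}}\partial_r P^{(1)}$), which cancels the top-order contribution of $\nabla(\eta^{(1)}-\eta^{(2)})$ and leaves only commutators controlled by $\Vert \eta^{(1)}-\eta^{(2)}\Vert_{H^s}$; see \cite[Step~1 of the proof of Prop.~4.1]{Fradin2024}. Without this correction your argument does not close.
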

\begin{proof}
Using standard elliptic estimates (see for instance~\cite[Lemma 2.38]{Lannes2013} or~\cite[Lemma 4]{Desjardins2019}), we get
\begin{equation}
\label{eqn:pression:lowreg}
\Vert P\Vert_{L^2} + \Vert \gradphi[x,r] P\Vert_{H^{s_0+1}} \leq C \Vert (\vec{U},\rho,\eta)\Vert_{H^{s_0+2}}.
\end{equation}
Using~\cite[Proposition 2.6]{Fradin2024} and~\eqref{eqn:pression:lowreg}, we get~\eqref{eqn:existence:elliptic:borne}.\\
The strategy to obtain~\eqref{eqn:existence:elliptic:lip} relies on Alinhac's good unknown and~\cite[Proposition 2.6]{Fradin2024}. This is detailed in~\cite[Step 1 of the proof of Prop. 4.1]{Fradin2024} and we omit it.
\end{proof}
Therefore, the system~\eqref{eqn:moll:123} has an ODE structure with a Lipschitz source term, thanks to Lemma~\ref{lemma:moll:pression}, and as such, admits a unique solution on a time interval $[0,T_{\iota_1,\iota_2,\iota_3}]$ with respect to the initial data given by~\eqref{eqn:moll:CI}. \\

\noindent{\bf\underline{Step 4.1:} Sending $\iota_1$ to $0$} \saut
The time of existence $T_{\iota_1,\iota_2,\iota_3}$ given in Step $3$ is not independent of the regularization parameters. A standard energy estimate shows that $T_{\iota_1,\iota_2,\iota_3}$ is bounded from below by a constant independent of $\iota_1$, see for instance~\cite[Section 3.2]{AJMajda2002}.\\
\begin{remark}
\label{rk:iota1}
The energy estimate from Proposition~\ref{prop:energy} does not apply here: the regularization operators $\reg{1}$ forbid to use the identity~\eqref{eqn:slag}.  In particular, the divergence-free condition is not propagated and the use of Alinhac's good unknown is not possible. We thus use a more standard energy estimate to show that $t_{\iota_1,\iota_2,\iota_3}$ is independent of $\iota_1$. This energy estimate is, however, not enough to conclude the independence with respect to the other regularization parameters.
\end{remark}
\noindent{\bf \underline{Step 4.2:} Incompressibility and sending $\iota_2$ to $0$} \saut
We now have a solution to the system~\eqref{eqn:moll:123} with $\iota_1 = 0$. 
Thanks to the equation on $\eta$ in~\eqref{eqn:moll:123} with $\iota_1=0$, we can write~\eqref{eqn:moll:123} with $\iota_1=0$ and without the semi-Lagrangian property~\eqref{eqn:slag}; this reads
\begin{equation}
\label{eqn:moll:23}
\sys{ \partialphi{t} \vec{U} + \mathcal{J}_{\iota_2} V \cdot \gradphi \vec{U} + \reg{2}w \partialphi{r} \vec{U} + \frac{1}{\rhob + \rho}\gradphi[x,r] P  + \frac{g\rhob}{\rhob + \rho} \mathcal{J}_{\iota_2} \nabla_{x,r} \eta_0 + g \frac{\rho}{\rhob + \rho} \vec{e_z}& = \frac{\iota_3}{\rhob + \rho} \mathcal{J}_{\iota_2} \gradphi[x,r] \Lambda \eta_0^h, \\
\partialphi{t} \rho  + \mathcal{J}_{\iota_2}V \cdot \gradphi \rho  + \reg{2}w \partialphi{r} \rho &= 0,\\
\partial_t \eta + \reg{2} V \cdot \nabla_x \eta - \reg{2} w &= 0,}
\end{equation}
with the kinematic equation for the surface
\begin{equation}
\label{eqn:moll:23:eta}
\partial_t \eta_0 + \mathcal{J}_{\iota_2}V_{|r=0} \cdot \nabla_x \eta_0  - \reg{2}w_{|r=0} = 0.
\end{equation}
Applying $\gradphi[x,r] \cdot$ to the equations on $V$ and $w$ in~\eqref{eqn:moll:23} and using the equation~\eqref{eqn:existence:elliptic} satisfied by the pressure, we can see that $\gradphi[x,r] \cdot \vec{U}$ satisfies a transport equation at velocity $\reg{2} \vec{U}$. Thus, as the initial condition $(V_{\ini},w_{\ini})^T$ is divergence-free, so is the solution to~\eqref{eqn:moll:23} that we constructed in Step 4.1.\\

We now perform an energy estimate similar to Proposition~\ref{prop:energy}, in order to show that $T_{\iota_2,\iota_3}$ is independent of $\iota_2$. More precisely, we apply $\dot{\Lambda}^{s}$ with $s\geq s_0+2$ to~\eqref{eqn:moll:23} and get the following equations in $S$
\begin{equation}
\label{eqn:moll:23:quasilin}
\sys{ \partialphi{t} \pointal{\vec{U}}{s} + \mathcal{J}_{\iota_2} V \cdot \gradphi \pointal{\vec{U}}{s} + w \partialphi{r} \pointal{\vec{U}}{s} + \frac{1}{\rhob + \rho}\gradphi[x,r] \pointal{P}{s}  + g \rhob \mathcal{J}_{\iota_2} \nabla_{x,r} \pointal{\eta_0}{s} & \approx \frac{\iota_3}{\rhob + \rho} \mathcal{J}_{\iota_2} \gradphi[x,r] \Lambda (\pointal{\eta_0}{s})^h, \\
\partialphi{t} \pointal{\rho}{s}  + \mathcal{J}_{\iota_2}V \cdot \gradphi \pointal{\rho}{s} + w \partialphi{r} \pointal{\rho}{s} &\approx 0,\\
\partial_t \pointal{\eta}{s} + \reg{2} V \cdot \nabla_x \pointal{\eta}{s} & \approx 0,}
\end{equation}
\begin{equation}
\label{eqn:moll:23:quasilin:eta0}
\partial_t \pointal{\eta_0}{s} + \reg{2} V_{|r=0} \cdot \nabla \pointal{\eta_0}{s} + \reg{2} \pointal{V}{s}_{|r=0} \cdot \nabla \eta_0 - \reg{2} \pointal{w}{s}_{|r=0} \approx 0,
\end{equation}
where the symbol $\approx$ means that remainder terms are omitted. Those remainder terms are of the form of the ones in~\eqref{eqn:euler_quasilin:restes} with additional but harmless terms stemming from commutators with $\reg{2}$ and the dispersive regularization $\iota_3 \gradphi[x,r] \Lambda \eta_0^h$. Note that we use Alinhac's good unknown here. We now define the following energy functional $\tilde{\mathcal{E}}_s$ defined as
\begin{equation}
\label{eqn:nrj:def:moll}
 \left\Vert\left(\sqrt{(\hb +  h)(\rhob + \rho)}\pointal{V}{s},\sqrt{ (\hb + h)(\rhob +\rho)}\pointal{w}{s}, \sqrt{(\hb + h)} \pointal{\rho}{s}\right)\right\Vert_{L^2(S)}^2 + \vert (g\rhob+\iota_3 \Lambda^{\frac12}) \pointal{\eta}{s}_0\vert_{L^2(\R^d)}^2 + \Vert \vec{\omega} \Vert_{H^{s-1}(S)}^2.
\end{equation}
Note that because of the additional dispersion term $\frac{\iota_3}{\rhob + \rho} \mathcal{J}_{\iota_2} \gradphi[x,r] \Lambda (\pointal{\eta_0}{s})^h$~\eqref{eqn:moll:23:quasilin}, we control an extra half-derivative on $\eta_0$ compared to Proposition~\ref{prop:energy}.
Following the same steps as the proofs of Propositions~\ref{prop:energy} and~\ref{prop:restes}, we can write
\begin{equation}
\label{eqn:nrj:estimee:moll}
\begin{aligned}
\frac{d}{dt} \tilde{\mathcal{E}}_s &+ \epsilon \int_{\R^d} \point{\eta_0}{s} (\partialphi{r} P)_{|r=0} \cdot \pointal{\vec{U}}{s}_{|r=0} + \int_S \reg{2} \gradphi[x,r] (g\rhob \pointal{\eta_0}{s} + \iota_3\Lambda (\pointal{\eta_0}{s})^h) \cdot \pointal{\vec{U}}{s}\\
&  + \int_{\R^d}(\reg{2} \pointal{V}{s}_{|r=0} \cdot \nabla \eta_0 - \reg{2} w) g \rhob \pointal{\eta_0}{s} \approx 0.
\end{aligned}
\end{equation}
The first integral comes from the integration by parts on the pressure term, see~\eqref{eqn:pression:ipp}. We bound this term from above, up to a constant, by
$$\vert \point{\eta_0}{s} \vert_{H^{\frac12}(\R^d)} \vert \pointal{\vec{U}}{s}_{|r=0} \vert_{H^{-\frac12}(\R^d)},$$
which in turn is bounded by $\tilde{\mathcal{E}}$, thanks to the trace estimate~\eqref{apdx:eqn:trace}.
Now using the symmetry of $\reg{2}$ for the $L^2-$scalar product in the second integral and integrating by parts, we get, similarly to~\eqref{eqn:energy:V:1}(v):
$$\frac{d}{dt} \tilde{\mathcal{E}} \approx 0.$$
This shows that the time of existence of the solutions of~\eqref{eqn:moll:23} $T_{\iota_2,\iota_3}$ is independent of $\iota_2$.\\
\begin{remark}
\label{rk:iota2}
The only difference between this energy estimate and the one in Proposition~\ref{prop:energy} is the second term in~\eqref{eqn:nrj:estimee:moll}. Because it involves $\pointal{\vec{U}}{s}$ and not $\reg{2}\pointal{\vec{U}}{s}$ (like the third term in~\eqref{eqn:nrj:estimee:moll}), it cannot be treated via a symmetry argument (as in Proposition~\ref{prop:energy}). We rather use the dispersion term that controls an extra half derivative on $\eta_0$.
\end{remark}
\noindent{\bf \underline{Step 4.3:} Sending $\iota_3$ to $0$} \saut
We now have the existence of a solution to the system~\eqref{eqn:moll:23} with $\iota_2 = 0$. We can now apply the energy estimate of Proposition~\ref{prop:energy}, in particular using the Rayleigh-Taylor coefficient as in~\eqref{eqn:pression:ipp}, with the additional dispersion term $\iota_3 \Lambda^{\frac12} \point{\eta_0}{s}$ as in~\eqref{eqn:nrj:def:moll}. Contrary to Step $4.2$, we do not use the regularizing property of the dispersion (except to control terms stemming from the dispersion itself, which are of size $\iota_3$), hence the time of existence $T_{\iota_3}$ is independent of $\iota_3$. We eventually have existence of solutions to the system~\eqref{eqn:euler_phi}. Finally, changing variables, this yields a solution of the original system~\eqref{eqn:euler_euleriennes} in Eulerian coordinates, or equivalently,~\eqref{eqn:euler_phi}. This same energy estimate also provides the dependency with respect to $\epsilon,\beta,\mu,\delta$ of the time of existence in Theorem~\ref{thm:euler_phi} and the uniqueness of the solution.\\

\noindent{\bf \underline{Step 5:} Blow-up criterion} \saut
Classically (see~\cite{AJMajda2002}), the solutions can be continued as long as the assumptions of the energy estimate of Proposition~\ref{prop:energy} are satisfied, and we denote by $T_{\max}$ the maximal time of existence of a solution. This yields the following blow-up criterion
\begin{equation}
\label{eqn:critere_explosion:temp}
\text{or } \quad
\begin{aligned}
 &\limsup\limits_{t \to T_{\max}} \Vert (V, \sqrt{\mu} w,\sqrt{\mu} \rho)(t,\cdot)\Vert_{H^s(S_r)} + \vert \eta_0(t,\cdot)\vert_{H^s(\R^d)} + \Vert \vort(t,\cdot) \Vert_{H^{s-1}(S)}= + \infty \\
%& \inf\limits_{(x,r)\in S} (1-\beta b(x) + \epsilon \eta_0(t,x,r)) \underset{t\to T_{\max}}{\longrightarrow} 0,
& \liminf\limits_{t \to T_{\max}}\inf\limits_{(x,r)\in S} \taylor = 0,\\
& \liminf\limits_{t \to T_{\max}}\inf\limits_{(x,r)\in S} (\rhob + \epsilon \delta \rho) = 0,\\
& \liminf\limits_{t \to T_{\max}}\inf\limits_{x\in \R^d} 1 - \beta b(x) + \epsilon \eta_0(t,x) = 0.\\
\end{aligned}
\end{equation}
We now show that the last two conditions cannot happen. The condition on the density cannot happen as $\rho$ is transported by the fluid, therefore if~\eqref{hyp:density} holds initially, it holds as long as $V$ and $w$ remain Lipschitz continuous. for the last alternative in~\eqref{eqn:critere_explosion:temp}, we can compute, thanks to the kinematic equation~\eqref{eqn:euler_phi:bc} on $\eta_0$ and the divergence-free condition in~\eqref{eqn:euler_phi}:
$$\begin{aligned}
\frac{d}{dt} (1 - \beta b(x) + \epsilon \eta_0(t,x)) &= \epsilon \partial_t \eta_0(t,x) = -V(t,x,0) \cdot \nabla \eta_0(t,x) + w(t,x,0) \\
&= \int_{-1}^0 \partial_r (-V(t,x,r) \cdot \nabla (\etab(x,r) + \epsilon \eta(t,x,r)) + w(t,x,r))dr \\
&= \int_{-1}^0 \left( -\nabla(\etab + \epsilon \eta)\partial_r V + \partial_r w - V \nabla (\hb + \epsilon h) \right)\\
&= -\int_{-1}^0 \nabla \cdot ((\hb + \epsilon h)V)\\
\partial_t (1 - \beta b + \epsilon \eta_0)&= -\nabla \cdot ((1 - \beta b + \epsilon \eta_0) \overline{V}),
\end{aligned} $$
where 
$$\overline{V}(t,x) := \frac{1}{1 - \beta b(x) + \epsilon \eta_0(t,x)} \int_{-1}^0 (\hb(x,r) + \epsilon h(t,x,r)) V(t,x,r)dr$$
is the vertically averaged horizontal velocity. This is the conservation of the fluid height, and so if initially the fluid height is bounded from below (see Assumption~\eqref{hyp:non_cavitation:euleriennes}), then it holds as long as $\overline{V}$ remains Lipschitz continuous, by the characteristics method, and the blow-up criterion~\eqref{eqn:critere_explosion:temp} boils down to~\eqref{eqn:critere_explosion}. \\
\end{proof}
%We now state the existence and uniqueness result for the system~\eqref{eqn:euler_euleriennes}. Recall the definition~\eqref{eqn:def:Omegat} of $\Omega_0$ and the definition~\ref{eqn:def:Hsomegat} of $C^{0}([0,T[,H^{s}(\Omega_t)^{d+1} \times H^s(\R^d))$.
\section{The shallow water limit}
\label{section:cvce}
In this section, we study the shallow water limit, namely $\mu \to 0$, in the system~\eqref{eqn:euler_phi}. We show in Subsection~\ref{subsection:cvce_short} that the solution of~\eqref{eqn:euler_phi} converges towards the solution of the non-linear shallow water equations~\eqref{eqn:nlsw}. In Subsection~\ref{subsection:cheating}, we use this result to improve the time of existence of Theorem~\ref{thm:euler_phi}, although in a specific setting and only to a time of order $\log(\frac{1}{\epsilon})$.
\subsection{Quantitative comparison with the non-linear shallow water equations}
\label{subsection:cvce_short}
Recall the non-linear shallow water equations~\eqref{eqn:nlsw}. In order to compare a solution $(V_{\sw},\eta_{\sw})$ of this system defined on $\R^d$ to a solution of~\eqref{eqn:euler_phi} defined in $S$, we identify $V_{\sw}$ to its natural extension $(t,x,r) \mapsto V_{\sw}(t,x)$ defined in $S$. Moreover, we define a shallow water vertical velocity $w_{\sw}$ as
\begin{equation}
\label{eqn:def_wsw}
w_{\sw} := \beta V_{\sw} \cdot \nabla b - (r + 1)(1-\beta b + \epsilon \eta_{\sw}) \nabla \cdot V_{\sw},
\end{equation}
so that the incompressibility condition in~\eqref{eqn:euler_phi} and the boundary condition~\eqref{eqn:euler_phi:bc} at the bottom and at the top are also satisfied by $(V_{\sw},w_{\sw})$. Defining $P_{\sw} := 0$, we can thus write the non-linear shallow water equations~\eqref{eqn:nlsw} as
\begin{equation}
\label{eqn:nlsw:euler_form}
\sys{\partialphi{t} V_{\sw} + \epsilon \vec{U}_{\sw} \cdot \gradphi[x,r] V_{\sw} + \frac{1}{\rhob} \gradphi[x] P_{\sw} + \nabla_x \eta_{\sw} &= 0 \\
\mu \left( \partialphi{t} w_{\sw} + \epsilon \vec{U} \cdot \gradphi[x,r] w_{\sw} \right) + \frac{1}{\rhob} \partialphi{r} P_{\sw} &= \mu f_2,\\
\nabla_x \cdot V_{\sw} + \partial_z w_{\sw} &= 0,} \qquad \text{ in } [0,T] \times \R^d \times [-1,0],
\end{equation}
with $f_2 := \partialphi{t} w_{\sw} + \epsilon \vec{U} \cdot \gradphi[x,r] w_{\sw}$ so that the second equation in~\eqref{eqn:nlsw:euler_form} is trivial. We still write it to stress the comparison with the Euler equations~\eqref{eqn:euler_phi}. Note that the analogous of the equation on the density $\rho$ in~\eqref{eqn:euler_phi} still holds if we define $\rho_{\sw} = 0$. The equations~\eqref{eqn:nlsw:euler_form} are completed with the boundary conditions
\begin{equation}
\label{eqn:nlsw:euler_form:bc}
\sys{ P_{\sw|r=0} &= 0,\\
	\partial_t \eta_{\sw} + \epsilon V_{\sw | r=0} \cdot \nabla_x \eta_{\sw} - w_{\sw|r=0} &= 0,\\
	\vec{N}_b \cdot \vec{U}_{\sw |r=-1} &= 0,\\}
\end{equation}
coming from~\eqref{eqn:nlsw} and the definition of $w_{\sw}$ and $P_{\sw}$. They are also completed with initial conditions analogous to~\eqref{eqn:euler_phi:ci}.
%According to [BM], there exists a unique solution of~\eqref{eqn:nlsw} in $C([0,T/\epsilon[,H^{s}(\R^d))$ with initial condition $(V_{\sw}_{\ini},(\eta_0)_{\ini}) \in H^s(\R^d)$. 

Let us now state the assumptions for the following theorem to hold true. Let $s_0,s \in \N$, with $s_0>\frac{d+1}{2}$ and $s \geq s_0+2$. Let $(V,w,\eta_0) \in C^0([0,T),(H^s)^{d+1}(S) \times H^s(\R^d))$ be a solution of~\eqref{eqn:euler_phi} and $(V_{\sw},\eta_{\sw}) \in C^0([0,T[,H^{s+1}(\R^d)^{d+1})$ be a solution of~\eqref{eqn:nlsw}. We assume that the two solutions are close initially, that is
\begin{hyp}
\label{hyp:E0}
\Vert (V - V_{\sw}, \sqrt{\mu}(w - w_{\sw})) \Vert_{H^s} + \vert \eta - \eta_{\sw} \vert_{H^s} + \Vert \sqrt{\mu} \rho \Vert_{H^s} + \frac{1}{\sqrt{\mu}}  \Vert \partial_r V \Vert_{H^{s-1}} \leq  \sqrt{\mu} \qquad \text{ at } t=0,
\end{hyp}We also write $M_{\sw}>0$ a constant such that
\begin{hyp}
\label{hyp:Msw}
\vert (V_{\sw},\eta_{\sw})\vert_{H^{s+1}} \leq M_{\sw}.
\end{hyp}We also assume weak density perturbations , that is
\begin{hyp}
\label{hyp:weak_density}
\delta \leq \mu.
\end{hyp}
\begin{theorem}
\label{thm:convergence:svt}
Let $T > 0$ such that there exist a solution $(V_{\sw},\eta_{\sw})$ of \eqref{eqn:nlsw} in \\$C^0([0,T], H^{s+1}(\R^d)^{d+1})$ and a solution $(V,w,\rho,\eta_0)$ of \eqref{eqn:euler_phi} in $C^0([0,T[, H^{s}(S)^{d+2} \times H^s(\R^d))$ satisfying \eqref{hyp:b}, \eqref{hyp:H}, \eqref{hyp:taylor}, \eqref{hyp:M}, \eqref{hyp:Mini}, \eqref{hyp:E0}, \eqref{hyp:Msw}, \eqref{hyp:weak_density}. Then there exists a constant $C$ depending only on $M$ (defined in \eqref{hyp:M}) and $M_{\sw}$ such that
$$\sup_{t\in[0,T]} \left( \Vert V - V_{\sw} \Vert_{H^{s}(S)} + \vert \eta_0-\eta_{\sw} \vert_{H^s(\R^d)} \right) \leq C\sqrt{\mu}.$$
\end{theorem}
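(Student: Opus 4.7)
The plan is to perform an energy estimate on the differences
\[\Vdiff := V - V_{\sw}, \quad \wdiff := w - w_{\sw}, \quad \etadiff := \eta_0 - \eta_{\sw}, \quad \Pdiff := P,\]
mirroring the scheme of Proposition~\ref{prop:energy}. Since $\rho_{\sw} = 0$ and $P_{\sw} = 0$ in~\eqref{eqn:nlsw:euler_form}, the ``difference'' density and pressure are simply $\rho$ and $P$ themselves, so Proposition~\ref{prop:pression} applies directly and, under~\eqref{hyp:weak_density}, yields the key smallness $\Vert \sqrt{\mu}\gradphi P,\, \partialphi{r} P\Vert_{H^{s,k}} \lesssim \sqrt{\mu}$.

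First I would subtract~\eqref{eqn:nlsw:euler_form} from~\eqref{eqn:euler_phi} (both expressed in the $(x,r)$ coordinates associated with the free surface $\eta_0$ of the Euler solution) to obtain a system on $(\Vdiff,\wdiff,\rho,\etadiff)$ of exactly the form of the quasilinear system~\eqref{eqn:quasilin:gen}--\eqref{eqn:quasilin:vort:gen}, with background coefficients provided by the Euler solution and source terms collecting: the residual $\mu f_2$ of~\eqref{eqn:nlsw:euler_form}, of size $O(\mu)$ in $L^2$ thanks to~\eqref{hyp:Msw}; the buoyancy corrections involving $\delta \rho/\rhotot$ and the discrepancy between $1/\rhotot$ and $1/\rhob$, of size $O(\delta)=O(\mu)$ by~\eqref{hyp:weak_density}; and quadratic remainders in the differences, such as $\epsilon \Vdiff \cdot \gradphi V_{\sw}$ and $\epsilon \wdiff\, \partialphi{r} V_{\sw}$, controlled by $\tilde{\E}^{1/2}$ thanks to the $H^{s+1}$-regularity of $(V_{\sw},\eta_{\sw})$ postulated in~\eqref{hyp:Msw}.

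Next I would introduce the difference energy $\tilde{\E}$ obtained by replacing each unknown in~\eqref{eqn:energy:def} by its tilde counterpart (keeping $\sqrt{\mu}\rho$ as-is since $\rho_{\sw}=0$, and using $\vort$ for the vorticity of the difference since the shallow-water velocity is vertical-derivative free), apply $\dot{\Lambda}^{s}$ with Alinhac's good unknown~\eqref{apdx:alinhac:definition} to the difference system, and invoke Lemma~\ref{lemma:quasilin:gen} together with remainder bounds proved in the same manner as Proposition~\ref{prop:restes}. This yields an inequality of the form
\[\frac{d}{dt} \tilde{\E} \leq C(\epsilon \vee \beta \vee \tfrac{\delta}{\mu}) \tilde{\E} + C\mu,\]
the $C\mu$ arising from the $L^2$-norms of the source terms, which are $O(\sqrt{\mu})$, times $\tilde{\E}^{1/2}$. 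Since~\eqref{hyp:E0} gives $\tilde{\E}(0) \leq C\mu$, a Gronwall argument on the time interval $[0,T]$ then provides $\tilde{\E}(t) \leq C\mu$, whence the claimed $\sqrt{\mu}$-bound on $\Vert V - V_{\sw}\Vert_{H^s} + |\eta_0 - \eta_{\sw}|_{H^s}$.

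The hard part will be the careful accounting of the horizontal pressure contribution $\int \frac{\hb + \epsilon h}{\rhotot}\gradphi P \cdot \pointal{\Vdiff}{s}$: as $\gradphi P$ is only $O(1)$ (rather than $O(\sqrt{\mu})$) in $H^s$, this integral cannot be bounded directly at the order $\sqrt{\mu}$. It has to be integrated by parts exactly as in~\eqref{eqn:pression:ipp}, so that the surface contribution $P_{|r=0}=0$ cancels against the Rayleigh-Taylor weighted term on $\etadiff$, the bottom contribution is of the type appearing in~\eqref{eqn:temp:3} and is controlled via the bound on $\Rcinq$ in~\eqref{eqn:restes:estimee}, and the remaining volume term $\int (\hb+\epsilon h) P\, \gradphi\cdot \pointal{\Vdiff}{s}$ is small thanks to the divergence-free constraint of Euler combined with $\gradphi\cdot V_{\sw} + \partialphi{r} w_{\sw}=0$ (by construction of $w_{\sw}$), together with the sharp bound $\partialphi{r} P = O(\sqrt{\mu})$ from~\eqref{eqn:pression:estimee}. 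It is precisely this interplay between the Rayleigh-Taylor structure at the surface and the dichotomous control $(\sqrt{\mu}\gradphi P,\,\partialphi{r}P)=O(\sqrt{\mu})$ on the pressure that makes the $O(\sqrt{\mu})$ convergence possible in the weak-density regime~\eqref{hyp:weak_density}.
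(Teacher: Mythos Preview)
Your overall strategy matches the paper's proof: subtract the shallow-water system from the Euler system, pass to Alinhac's good unknowns, invoke the quasilinear $L^2$ estimate of Lemma~\ref{lemma:quasilin:gen}, and close by Gronwall. The identification of the $\sqrt{\mu}$-forcing coming from $\mu f_2$, the buoyancy corrections under~\eqref{hyp:weak_density}, and the need for the pressure integration by parts~\eqref{eqn:pression:ipp} are all correct.

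There is, however, a genuine gap in your treatment of the vorticity. You propose to use $\vort$ itself as ``the vorticity of the difference since the shallow-water velocity is vertical-derivative free''. The premise $\partial_r V_{\sw}=0$ only kills the \emph{horizontal} component of the shallow-water vorticity; the vertical component $\nabla^\perp\!\cdot V_{\sw}$ is generically $O(1)$. Consequently $\Vert\vort\Vert_{H^{s-1}}^2$ is $O(1)$, and if this sits inside your $\tilde{\E}$ then $\tilde{\E}(0)$ is not $O(\mu)$ and Gronwall only returns $\tilde{\E}(t)=O(1)$, losing the $\sqrt{\mu}$-rate entirely. The paper repairs this by working with the \emph{difference} vorticity $\vortdiff := \vort - \bigl(0,\nabla^\perp\!\cdot V_{\sw}\bigr)^T$, which is $O(\sqrt{\mu})$ initially thanks to~\eqref{hyp:E0}, and by deriving a transport equation~\eqref{eqn:euler_diff:vort} for $\vortdiff$ whose source $\Fcinq$ is $O(\epsilon\,\Ediff^{1/2})$ plus $O(\delta/\sqrt{\mu})$.

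A second, smaller correction: the shallow-water velocity $(V_{\sw},w_{\sw})$ is \emph{not} exactly divergence-free in the Euler coordinates $\varphi$ (which are built from $\eta_0$, not $\eta_{\sw}$). One computes $\gradphi\!\cdot V_{\sw}+\partialphi{r}w_{\sw}=\epsilon\,\frac{\etadiff}{\hb+\epsilon h}\,\nabla\!\cdot V_{\sw}$, which is precisely the source $\Ftrois$ in~\eqref{eqn:euler_diff:ordre0:restes}. This feeds into the volume term after the pressure IBP and contributes at order $(\epsilon\vee\beta)\Ediff^{1/2}$; it does not threaten the final estimate, but your claim that this divergence vanishes is not correct.
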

\begin{remark}
The constant $C$ depends on $M_{\sw}$, an upperbound on $\vert (V_{\sw},\eta_{\sw})\vert_{H^{s+1}}$, which is a stronger norm than the one used in Theorem~\ref{thm:convergence:svt} for the convergence (i.e. we only control the difference of the solutions of~\eqref{eqn:euler_phi} and~\eqref{eqn:nlsw} in $H^s$). However, it only depends on an upperbound on the $H^s$-norm of the solution $(V,w,\rho,\eta_0)$ of~\eqref{eqn:euler_phi}, through~\eqref{hyp:M}. This is achieved by plugging the solution of the non-linear shallow water equations into the Euler equations~\eqref{eqn:euler_phi}, not the other way around. This fact will be used in Subsection~\ref{subsection:cheating}.
\end{remark}
\begin{remark}
\label{rk:euler_diff:estimate}
The proof of Theorem~\ref{thm:convergence:svt} yields the slightly more precise estimate, for any $t \in [0,T]$:
\begin{equation}
\label{eqn:euler_diff:estimate}
\begin{aligned}
&\Vert (V - V_{\sw}, \sqrt{\mu}(w - w_{\sw}))(t,\cdot) \Vert_{H^s} + \vert (\eta - \eta_{\sw})(t,\cdot) \vert_{H^s} + \Vert \sqrt{\mu} \rho \Vert_{H^s}+\frac{1}{\sqrt{\mu}} \Vert \partial_r V(t,\cdot) \Vert_{H^{s-1}} \\
&\leq C \sqrt{\mu} (1+t) e^{C (\epsilon \vee \beta\vee\frac{\delta}{\mu}) t},
\end{aligned}
\end{equation}
using~\eqref{hyp:E0}. This will be used in Subsection~\ref{subsection:cheating}.

Note also that the control of the difference of the solutions given by~\eqref{eqn:euler_diff:estimate} is in $\sqrt{\mu}$. This mostly comes from the fact that the columnar motion is only true up to order $O(\sqrt{\mu})$, see Remark~\ref{rk:columnar}. This is consistent with the result in~\cite{Castro2014a}. Note however that~\eqref{eqn:euler_diff:estimate} can be improved to a rate $O(\mu)$ in the homogeneous and irrotational case, see for instance~\cite{Lannes2013}.
\end{remark}
%According to [BM], there exists a unique solution $(V_{\sw},\eta_{\sw})$ of~\eqref{eqn:nlsw} in $C([0,T/\epsilon[,H^{s}(\R^d))$ with initial condition $((V_{\sw})_{\ini},(\eta_0)_{\ini}) \in H^s(\R^d)$.
\begin{proof}
With the notations and assumptions of Theorem~\ref{thm:convergence:svt}, let 
\begin{equation}
\label{eqn:def:Vdiff}
(\Vdiff,\wdiff,\etadiff) := (V-V_{\sw},w-w_{\sw},\eta_0-\eta_{\sw}).
\end{equation}
We look for an equation of the form~\eqref{eqn:quasilin:gen} satisfied by $(\Vdiff,\wdiff,\rho,\etadiff)$. Taking the difference between~\eqref{eqn:euler_phi} and~\eqref{eqn:nlsw:euler_form} yields
\begin{equation}
\label{eqn:euler_diff:ordre0}
\sys{
\partialphi{t} \Vdiff + \epsilon \vec{U} \cdot \gradphi[x,r] \Vdiff + \frac{1}{\rhob + \epsilon \delta \rho} \gradphi \Pdiff + \frac{g\rhob}{\rhob + \epsilon \delta \rho} \nabla \etadiff &= \Fun \\
\mu\left( \partialphi{t} \wdiff + \epsilon \vec{U} \cdot \gradphi[x,r] \wdiff \right) + \frac{1}{\rhob + \epsilon \delta \rho}\partialphi{r}\Pdiff + \frac{\delta \rho}{\rhotot} &= \mu\Fdeux, \\
\partialphi{t} \rho + \vec{U} \cdot \gradphi[x,r] \rho &= 0,\\
\gradphi \cdot \Vdiff + \partialphi{r} \wdiff &= \Ftrois,
}
\end{equation}
where we used that $V_{\sw}$ does not depend on $r$. This system is completed with boundary conditions
\begin{equation}
\label{eqn:euler_phi:order0:bc}
\sys{ \Pdiff_{r=0} &= 0, \\
	\partial_t \etadiff + \epsilon V_{|r=0} \cdot \nabla \etadiff - \wdiff_{|r=0} &= \Fquatre, \\
	\vec{N_b} \cdot \vec{\tilde{U}}_{|r=-1} &= 0;}
\end{equation}
where we have used the following notations:
\begin{equation}
\label{eqn:euler_diff:ordre0:restes}
\begin{aligned}
\Fun &:=\epsilon \Vdiff \cdot \nabla V_{\sw} + \delta \epsilon \frac{\rho}{\rhotot} \nabla \eta_{\sw},\\
\Fdeux&:= \partialphi{t} w_{\sw} + \epsilon \vec{U} \cdot \gradphi[x,r] w_{\sw},\\
\Ftrois&:= \epsilon \frac{(1+r)\etadiff}{(\hb + \epsilon h)(\hb + \epsilon h_{\sw})} \partial_r w_{\sw},\\
\Fquatre&:=\epsilon \Vdiff \cdot \nabla \eta_{\sw}.
\end{aligned}
\end{equation}
We also define a shallow water vorticity :
$$\Vortsw := \begin{pmatrix} 0 \\ \vortsw \end{pmatrix} := \begin{pmatrix} 0 \\ \nabla^{\perp} \cdot V_{\sw} \end{pmatrix},$$
as well as the difference of the vorticities
$$\vortdiff := \vort - \Vortsw.$$
Applying $\nabla^{\perp}$ to the equation on $V_{\sw}$ in~\eqref{eqn:nlsw} and taking the difference with the equation~\eqref{eqn:euler_phi:vort} on $\vort$, we get
\begin{equation}
\label{eqn:euler_diff:vort}
\partialphi{t} \vortdiff + \epsilon \vec{U} \cdot \gradphi[x,r] \vortdiff  = \Fcinq.
\end{equation}	
where
\begin{equation}
\label{eqn:vort:diff:reste}
\Fcinq := \epsilon \Vdiff \cdot \nabla \Vortsw + \epsilon \vortsw \partialphi{r} \begin{pmatrix} \frac{1}{\sqrt{\mu}} \Vdiff \\ \wdiff \end{pmatrix} - \epsilon \vortdiff^{[x]} \cdot \gradphi \begin{pmatrix} V \\ \sqrt{\mu} w \end{pmatrix} - \epsilon \vortdiff^{[z]} \partialphi{r} \begin{pmatrix} \frac{1}{\sqrt{\mu}} V \\ w \end{pmatrix} + \frac{\delta}{\sqrt{\mu}} \vec{F},
\end{equation}
where $\vec{F}$ is defined from $\rho,P,\eta_0$ through~\eqref{eqn:def:vort:source}.
We used that the horizontal component of $\Vortsw$ is zero and that $\vortsw$ does not depend on $r$ to simplify the above equation.\\

As the equation on $\rho$ in~\eqref{eqn:euler_diff:ordre0} is the same as the one in~\eqref{eqn:euler_phi}, the energy estimate is the same as in the proof of Proposition~\ref{prop:energy} and we omit it in this proof. Let us define
\begin{equation}
\label{eqn:def:Vdiffs}
(\Vdiffs,\wdiffs,\etadiffs) := (\pointal{V}{s} - \point{V}{s}_{\sw},\pointal{w}{s} - \point{w}{s}_{\sw},\point{\eta}{s}_0 - \point{\eta}{s}_{\sw}), 
\end{equation}
where  $(\point{V}{s}_{\sw},\point{\eta}{s}_{\sw}) := (\diff V_{\sw},\point{\eta}{s}_{\sw})$, $\point{\eta}{s}_0 := \dot{\Lambda}^{s}\eta_0$ and $\pointal{V}{s},\pointal{w}{s}$ are Alinhac's good unknowns defined through~\eqref{apdx:alinhac:definition}. We look for a system of equations on $(\Vdiffs,\wdiffs,\etadiffs)$.
To this end, we apply the operator $\diff$ to~\eqref{eqn:nlsw:euler_form} and~\eqref{eqn:nlsw:euler_form:bc} to get
\begin{equation}
\label{eqn:nlsw:quasilin}
\sys{	\partial_t \point{V}{s}_{\sw} + \epsilon V_{\sw} \cdot \nabla \point{V}{s}_{\sw} + g\nabla \point{\eta}{s}_{\sw} &=\Run^{\sw}, \\
\nabla \cdot \point{V}{s}_{\sw} + \frac{1}{1-\beta b + \epsilon \eta_{\sw}}\partial_r \point{w}{s}_{\sw} &= \Rtrois^{\sw}, \\
\partial_t \point{\eta}{s}_{\sw} + \epsilon V_{\sw} \cdot \nabla  \point{\eta}{s}_{\sw}  + (w_{\sw})_{|r=0}^s & = \Rquatre^{\sw},\\
	} \qquad \text{ in } \R^d,
\end{equation} 
with
\begin{equation}
\label{eqn:nslw:quasilin:restes}
\begin{aligned}
\Run^{\sw} &:= \epsilon [\diff,V_{\sw}] \cdot \nabla V_{\sw},\\
\Rtrois^{\sw} &:= [\diff,\frac{1}{1-\beta b + \epsilon \eta_{\sw}}] \partial_r w_{\sw},\\
\Rquatre^{\sw} &:= [\diff,V_{\sw} \cdot \nabla, \eta_{\sw}].\\% +[\diff,-\beta b + \epsilon \eta_{\sw}]\nabla \cdot V_{\sw} .\\
\end{aligned}
\end{equation}
Recall now the quasilinear system~\eqref{eqn:euler_quasilin} satisfied by Alinhac's good unknowns $(\pointal{V}{s},\pointal{w}{s})$, and $\point{\eta}{s}_0$. Taking the difference between~\eqref{eqn:nlsw:quasilin} and~\eqref{eqn:euler_quasilin} yields
\begin{equation}
\label{eqn:euler_diff:quasilin}
\sys{
\partialphi{t} \Vdiffs + \epsilon \vec{U} \cdot \gradphi[x,r] \Vdiffs + \frac{1}{\rhotot} \gradphi \Pdiffs + \frac{g\rhob}{\rhotot} \nabla \etadiffs &= \tilde{\Run} \\
\mu\left( \partialphi{t} \wdiffs + \epsilon \vec{U} \cdot \gradphi[x,r] \wdiffs \right) + \frac{1}{\rhotot}\partialphi{r}\Pdiffs  + \delta \frac{\rho}{\rhotot}&= \sqrt{\mu} \tilde{\Rdeux},\\
\gradphi \cdot \Vdiffs + \partialphi{r} \wdiffs &= \tilde{\Rtrois},
}
\end{equation}
where
\begin{equation}
\label{eqn:source:diff}
\begin{aligned}
\tilde{\Run} &:= \Run - \Run^{\sw}- \epsilon \Vdiffs \cdot \gradphi V_{\sw} - \delta \epsilon \frac{\rho}{\rhotot},\\
\tilde{\Rdeux} &:= \Rdeux-\sqrt{\mu} \left( \partialphi{t} \point{w}{s}_{\sw} + \epsilon U \cdot \gradphi[x,r] \point{w}{s}_{\sw} \right),\\
\tilde{\Rtrois} &= \Rtrois - \Rtrois^{\sw} + \epsilon \frac{(1+r)\etadiffs}{(\hb + \epsilon h)(\hb + \epsilon h_{\sw})} \partial_r w_{\sw},
\end{aligned}
\end{equation}
where $h_{\sw}$ is defined from $\eta_{\sw}$ through~\eqref{eqn:h:def}. Here, $\pointal{P}{s}$ is Alinhac's good unknown defined from $\varphi$ and $P$ through~\eqref{eqn:alinhac:def}, and $\Run,\Rdeux,\Rtrois$ are defined through~\eqref{eqn:euler_quasilin:restes}.
This set of equations is completed with the boundary conditions
\begin{equation}
\label{eqn:euler_phi:diff:bc}
\sys{ P_{r=0} &= 0, \\
	\partial_t \etadiffs + \epsilon V_{|r=0} \cdot \nabla \etadiffs - \wdiffs_{|r=0} &= \tilde{\Rquatre}, \\
	\vec{N_b} \cdot \vec{\tilde{U}}_{|r=-1} &= \tilde{\Rcinq},}
\end{equation}
where
\begin{equation}
\label{eqn:source:diff:eta}
\begin{aligned}
\tilde{\Rquatre} &:= \Rquatre - \Rquatre^{\sw} - \epsilon \Vdiffs_{r=0} \cdot \nabla \eta_{\sw},\\
\tilde{\Rcinq} &:= \Rcinq,
\end{aligned}
\end{equation}
and $\Rquatre,\Rcinq$ are defined through~\eqref{eqn:euler_quasilin:bc:restes}. We now define the following energy functional
\begin{equation}
\label{eqn:energy:Ediff:def}
\begin{aligned}
\Ediff &:=  \Edifflow + \Vert(\sqrt{(\hb + \epsilon h)(\rhotot)}\Vdiffs,\sqrt{\mu(\hb + \epsilon h)(\rhotot)}\wdiffs, \sqrt{\taylor} \etadiffs \Vert_{L^2}^2 \\
&+ \sum\limits_{k \leq s} \Vert \sqrt{\mu(\hb + \epsilon h)} \pointal{\rho}{(s,k)} \Vert_{L^2}^2 + \Vert \vortdiff \Vert_{H^{s-1}(S)}^2,
\end{aligned}
\end{equation}
where 
\begin{equation}
\label{eqn:energy:def:Edifflow}
\Edifflow := \Vert(\Vdiff,\sqrt{\mu} \wdiff)\Vert_{H^{s_0+1,2}}^2 + \Vert \sqrt{\mu} \rho\Vert_{L^2}^2 +   g\vert  \etadiff \vert_{H^{s_0+1}}^2.
\end{equation}
Lemma~\ref{lemma:equiv:Ediff} states that this functional is equivalent to the $H^s$-norm of $(\Vdiff,\wdiff,\rho,\etadiff)$, and in this sense, is analogous to Lemma~\ref{lemma:energy:equiv}.
\begin{lemma}
\label{lemma:equiv:Ediff}
Under Assumptions~\eqref{hyp:H},\eqref{hyp:taylor},\eqref{hyp:M}, for any $s\geq s_0+2$, $\Ediff$ is equivalent to $\Vert \Vdiff,\sqrt{\mu} \wdiff,\sqrt{\mu} \rho \Vert_{H^s}$ $+ \vert \etadiff \vert_{H^s}$, that is there exists $C > 0$ such that
\begin{equation}
\label{eqn:energy:equiv:Ediff}
\frac{1}{C} \left(\Vert \Vdiff,\sqrt{\mu} \wdiff,\sqrt{\mu} \rho \Vert_{H^s}^2  + \Vert \vortdiff \Vert_{H^{s-1}}^2+ \vert \etadiff \vert_{H^s}^2 + \mu\right) \leq \Ediff \leq C \left(\Vert \Vdiff,\sqrt{\mu} \wdiff,\sqrt{\mu} \rho \Vert_{H^s}^2  + \Vert \vortdiff \Vert_{H^{s-1}}^2+ \vert \etadiff \vert_{H^s}^2 + \mu \right) 
\end{equation}
\end{lemma}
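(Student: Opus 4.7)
The plan is to adapt the argument of Lemma~\ref{lemma:energy:equiv} to the differences, with one central observation: since $V_{\sw}$ and $\rho_{\sw}=0$ are independent of $r$ and $w_{\sw}$ depends only affinely on $r$ with $\partial_r w_{\sw}=-(1-\beta b+\epsilon\eta_{\sw})\nabla\cdot V_{\sw}$ uniformly bounded in $L^\infty$ by~\eqref{hyp:Msw}, the quantities $\Vdiffs$ and $\pointal{\rho}{(s,k)}$ of~\eqref{eqn:def:Vdiffs} are \emph{exactly} Alinhac's good unknowns of $\Vdiff$ and $\rho$, while $\wdiffs$ differs from Alinhac's good unknown of $\wdiff$ by the term $-\frac{\dot{\Lambda}^{s}(\etab+\epsilon\eta)}{\hb+\epsilon h}\partial_r w_{\sw}$ of $L^2$-norm $O(1)$; after multiplication by $\sqrt{\mu}$, this produces an $O(\sqrt{\mu})$ correction whose square falls within the $+\mu$ term of the statement.

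For the upper bound $\Ediff\leq C(\Vert\Vdiff,\sqrt{\mu}\wdiff,\sqrt{\mu}\rho\Vert_{H^{s}}^{2}+\Vert\vortdiff\Vert_{H^{s-1}}^{2}+\vert\etadiff\vert_{H^{s}}^{2}+\mu)$, each summand of~\eqref{eqn:energy:Ediff:def} is bounded by Cauchy--Schwarz using $L^\infty$ controls on $\hb+\epsilon h$ (from~\eqref{hyp:H}), on $\rhotot$ (from~\eqref{hyp:density}, Sobolev embedding and~\eqref{hyp:M}) and on $\taylor$ (from Lemma~\ref{lemma:taylor:dt}), together with the product estimate~\eqref{apdx:eqn:pdt:algb} applied to Alinhac's good unknowns. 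The only residual $O(\mu)$ contributions arise from the density term, where expanding $\pointal{\rho}{(s,k)}$ yields a correction $\mu\vert\eta_0\vert_{H^{s}}^{2}\Vert\rho\Vert_{H^{s_0+1,1}}^{2}\leq C\mu$ by~\eqref{hyp:M}, and from the $\wdiff$-term via the correction identified above; both are absorbed into the $+\mu$.

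For the lower bound, $\Vert\vortdiff\Vert_{H^{s-1}}^{2}$ is a direct summand of $\Ediff$, and $\vert\etadiff\vert_{H^{s}}^{2}\leq C\Ediff$ follows from $\vert\sqrt{\taylor}\etadiffs\vert_{L^{2}}^{2}+\vert\etadiff\vert_{H^{s_0+1}}^{2}\leq\Ediff$ combined with~\eqref{hyp:taylor}. Applying~\eqref{eqn:alinhac:equiv} with the identification noted above yields $\Vert\Vdiff\Vert_{H^{s,0}}^{2}\leq C\Ediff$ directly, and $\Vert\sqrt{\mu}\wdiff\Vert_{H^{s,0}}^{2}+\Vert\sqrt{\mu}\rho\Vert_{H^{s,0}}^{2}\leq C(\Ediff+\mu)$. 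The main technical step is the control of higher vertical derivatives $\Vert\Vdiff,\sqrt{\mu}\wdiff\Vert_{H^{s,k}}^{2}$ for $k\geq 1$, handled by induction on $k$ as in the proof of Lemma~\ref{lemma:energy:equiv}: since $\partial_r V_{\sw}=0$, the identity~\eqref{eqn:1} applies verbatim to $\Vdiff$, and the near-divergence-free equation in~\eqref{eqn:euler_diff:ordre0} with source $\Ftrois$ (bounded by $\epsilon\vert\etadiff\vert_{H^{s-1}}\Vert\partial_r w_{\sw}\Vert_{H^{s-1}}\leq C\Ediff^{1/2}$ via~\eqref{hyp:Msw}) controls $\partial_r\wdiff$ inductively; the trace-based argument leading to~\eqref{eqn:w:trick} then recovers $\wdiff$ itself. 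The main obstacle throughout is the bookkeeping of the extra terms coming from $\partial_r w_{\sw}\neq 0$, but these are always of size $O(\sqrt{\mu})$ after $\sqrt{\mu}$-weighting and accommodated by the $+\mu$.
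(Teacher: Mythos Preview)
Your proof is correct and follows essentially the same approach as the paper's. The central observation---that $\Vdiffs$ is exactly Alinhac's good unknown of $\Vdiff$ (since $\partial_r V_{\sw}=0$) while $\sqrt{\mu}\wdiffs$ differs from Alinhac's good unknown of $\sqrt{\mu}\wdiff$ by an $O(\sqrt{\mu})$ term coming from $\partial_r w_{\sw}\neq 0$---is exactly what the paper uses, after which the paper simply writes $\Ediff=\E+\mu C$ (with $\E$ now built from the differences) and invokes Lemma~\ref{lemma:energy:equiv}; you instead spell out the inductive control of vertical derivatives explicitly, which is the content of that lemma. One harmless over-complication: the density term does not actually generate an $O(\mu)$ correction, since $\rho_{\sw}=0$ means $\pointal{\rho}{(s,k)}$ is already Alinhac's good unknown of $\rho$ itself and~\eqref{eqn:alinhac:equiv} applies without residual.
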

\begin{proof}
First note that
$$\Vdiffs = \pointal{V}{s} - \point{V}{s}_{\sw} = \diff \Vdiff - \frac{\diff (\etab + \epsilon \eta)}{\hb + \epsilon h} \partial_r \Vdiff,$$
as $V_{\sw}$ does not depend on $r$. Thus, $\Vdiffs$ is Alinhac's good unknown associated to $\Vdiff$. Now, notice that
$$\wdiffs = \pointal{w}{s} - \point{w}{s}_{\sw} = \diff \wdiff - \frac{\diff (\etab + \epsilon \eta)}{\hb + \epsilon h} \partial_r \wdiff -\frac{\diff (\etab + \epsilon \eta)}{\hb + \epsilon h} \partial_r w_{\sw}.$$
Therefore, $\sqrt{\mu} \wdiffs$ is Alinhac's good unknown associated to $\sqrt{\mu}\wdiff$, up to an error term of order $\sqrt{\mu}$. We can thus write:
$$\Ediff = \E + \mu C,$$
where $C$ depends only on $M$ and $M_{\sw}$ and $\E$ is defined through~\eqref{eqn:energy:def}. Then the estimate~\eqref{eqn:energy:equiv} yields the result.
\end{proof}
The remaining part of the proof is devoted to the proof of the following estimate
\begin{equation}
\label{eqn:energy:Ediff}
\frac{d}{dt} \Ediff \leq C (\epsilon \vee \beta \vee \frac{\delta}{\mu}) \Ediff + C \sqrt{\mu} \Ediff^{\frac12},
\end{equation}
as from~\eqref{eqn:energy:Ediff}, Lemma~\ref{lemma:equiv:Ediff} and Gönwall's lemma imply~\eqref{eqn:euler_diff:estimate}, which in turn implies the estimate in Theorem~\ref{thm:convergence:svt} thanks to~\eqref{hyp:E0}. \\
We start by estimating the remainder terms in~\eqref{eqn:euler_diff:quasilin},~\eqref{eqn:euler_phi:diff:bc} as well as in~\eqref{eqn:euler_diff:ordre0:restes}.
\begin{lemma}
\label{lemma:reste:quasilin:diff}
Under the assumptions of Theorem~\ref{thm:convergence:svt}, there exists $C$ depending only on $M$, $M_{\sw}$ such that
\begin{equation}
\label{eqn:pression:estimee:diff}
\Vert \tilde{P} \Vert_{H^s} \leq C \Ediff^{\frac12} + C \sqrt{\mu},
\end{equation}
as well as
\begin{equation}
\label{eqn:restes:estimee:diff}
\Vert (\tilde{\Run},\tilde{\Rdeux},\tilde{\Rtrois},\tilde{\Rsix})\Vert_{L^2}  + \vert \tilde{\Rquatre} \vert_{L^2} + \vert \tilde{\Rcinq} \vert_{H^{\frac12}} \leq (\epsilon \vee \beta\vee\frac{\delta}{\mu})C \Ediff^{\frac12} + C \sqrt{\mu}.
\end{equation}
and
\begin{equation}
\label{eqn:restes:estimee:diff:0}
\Vert (\Fun,\Fdeux,\Ftrois,\Fquatre,\Fcinq)\Vert_{H^{s-1}}  \leq (\epsilon \vee \beta\vee\frac{\delta}{\mu}) C \Ediff^{\frac12} + C \sqrt{\mu}.
\end{equation}
\end{lemma}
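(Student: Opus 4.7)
The plan is to establish the three estimates by mirroring the proof of Proposition~\ref{prop:restes}, with two additional ingredients: bilinear splittings that expose a factor $\Ediff^{1/2}$ in each difference-type term, and the identification of a residual of size $\sqrt{\mu}$ whenever the shallow water solution is substituted into a non-exact Euler relation (together with~\eqref{hyp:weak_density}, which absorbs the singular $\delta/\sqrt{\mu}$ factors into $\sqrt{\mu}$).

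For the pressure estimate~\eqref{eqn:pression:estimee:diff}, since $P_{\sw}=0$ the quantity $\tilde{P}$ coincides with the Euler pressure, satisfying the elliptic problem~\eqref{eqn:pression:A}--\eqref{eqn:pression:bc:A}. Applying Proposition~\ref{prop:pression} directly only yields an $M^{1/2}$-sized bound, so to extract the refined size I would split the source~\eqref{eqn:def_src_pression} by inserting $\pm V_{\sw}$, $\pm w_{\sw}$, $\pm \eta_{\sw}$ into each quadratic factor, writing $\vec{R}=\vec{R}_{\mathrm{diff}}+\vec{R}_{\sw}$: here $\vec{R}_{\mathrm{diff}}$ is linear in $(\Vdiff,\wdiff,\etadiff,\rho)$ and hence controlled by $\Ediff^{1/2}$, whereas the residual $\vec{R}_{\sw}$ obtained by plugging $(V_{\sw},w_{\sw},0,\eta_{\sw})$ into~\eqref{eqn:def_src_pression} is non-trivial only in the vertical component (carrying the explicit $\sqrt{\mu}$ prefactor) and in the density term (of size $\delta/\sqrt{\mu}\leq\sqrt{\mu}$ by~\eqref{hyp:weak_density}). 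Applying \cite[Lemma 4.1]{Duchene2022} to this decomposition then gives~\eqref{eqn:pression:estimee:diff}.

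For~\eqref{eqn:restes:estimee:diff}, each remainder is treated term by term following Proposition~\ref{prop:restes}, but the differences $\Run-\Run^{\sw}$, $\Rtrois-\Rtrois^{\sw}$, $\Rquatre-\Rquatre^{\sw}$ are decomposed by bilinearity
\begin{equation*}
[\diff, f\cdot]\nabla g - [\diff, f_{\sw}\cdot]\nabla g_{\sw} = [\diff, (f-f_{\sw})\cdot]\nabla g + [\diff, f_{\sw}\cdot]\nabla (g-g_{\sw}),
\end{equation*}
so that every commutator carries an $\Ediff^{1/2}$-sized factor via the commutator and product estimates of Appendix~\ref{appendix:A}. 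The vertical-advection commutator $\epsilon[\diff;w,\partialphi{r} V]$ has no shallow-water counterpart since $\partial_r V_{\sw}\equiv 0$; rewriting it as $\epsilon[\diff;\sqrt{\mu}w,\partialphi{r}V/\sqrt{\mu}]$ and combining~\eqref{eqn:drV} with the uniform bound on $\sqrt{\mu}w$ yields the required contribution. The pressure-dependent commutators use~\eqref{eqn:pression:estimee:diff} in place of Proposition~\ref{prop:pression}. The genuinely shallow-water pieces in $\tilde{\Rdeux}$ and $\tilde{\Rtrois}$---namely the $\sqrt{\mu}(\partialphi{t}+\epsilon\vec{U}\cdot\gradphi[x,r])\point{w}{s}_{\sw}$ and the $\partial_r w_{\sw}$ terms---produce the $\sqrt{\mu}$ residual after using~\eqref{hyp:Msw}, while the correction terms appearing in~\eqref{eqn:source:diff} and~\eqref{eqn:source:diff:eta} are handled directly. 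Finally $\tilde{\Rcinq}=\Rcinq$ is treated exactly as in Proposition~\ref{prop:restes}.

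For~\eqref{eqn:restes:estimee:diff:0}, the terms $\Fun$, $\Fquatre$, $\Ftrois$ are direct products of $\Ediff^{1/2}$-sized differences with shallow water factors controlled by $M_{\sw}$ via~\eqref{apdx:eqn:pdt:algb}, while the $\delta\epsilon\rho\nabla\eta_{\sw}/\rhotot$ contribution to $\Fun$ provides $\sqrt{\mu}$ through~\eqref{hyp:weak_density}. The term $\Fdeux$ is bounded by $CM_{\sw}$ using the explicit form~\eqref{eqn:def_wsw} of $w_{\sw}$, and contributes $\sqrt{\mu}$ once the $\mu$-prefactor of the vertical momentum equation is divided out. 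For $\Fcinq$, the density source $(\delta/\sqrt{\mu})\vec{F}$ yields $\sqrt{\mu}$ via~\eqref{hyp:weak_density}, and the vertical advection $\epsilon\vortsw\partialphi{r}(\Vdiff/\sqrt{\mu})$ is controlled by~\eqref{eqn:drV} applied to $\Vdiff$, whose norm is included in $\Ediff$. The main obstacle throughout is the bookkeeping required to track which terms deserve the $\sqrt{\mu}$ bonus and which only the $\Ediff^{1/2}$ factor, and in particular to treat the vertical advection through~\eqref{eqn:drV}, exactly as in the analysis of Proposition~\ref{prop:restes}.
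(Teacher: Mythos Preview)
Your approach is correct and follows the paper's strategy closely: bilinear splitting to expose $\Ediff^{1/2}$-sized factors, plus identification of the $\sqrt\mu$ residuals coming from substituting the shallow-water solution into the Euler relations. Two points deserve tightening, however.

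First, for the Alinhac-type commutators $\commal{\diff}{\gradphi[t,x,r]}{V}$ appearing in $\tilde{\Run}$ (and similarly in $\tilde{\Rdeux},\tilde{\Rtrois}$), your sketch does not explain how the $\Ediff^{1/2}$ factor arises. The paper's key observation here is that, by the definition~\eqref{apdx:alinhac:comm:def}, these commutators depend on $\gradphi[t,x,r]f$ only through $\partialphi{r}f$; since $\partial_r V_{\sw}=0$, one has $\partialphi{r}V=\partialphi{r}\Vdiff$, and the estimate~\eqref{apdx:alinhac:R1} then yields the required factor. You should make this explicit rather than absorbing it into ``following Proposition~\ref{prop:restes}''.

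Second, when you invoke~\eqref{eqn:drV} for the symmetric commutator $\epsilon[\diff;w,\partialphi{r}V]$, note that~\eqref{eqn:drV} bounds $\frac{1}{\sqrt\mu}\partial_r V$ by $\E$, not by $\Ediff$. The correct argument is that $\partialphi{r}V=\partialphi{r}\Vdiff$ (again using $\partial_r V_{\sw}=0$) and that the horizontal component of $\Vortsw$ vanishes, so $\frac{1}{\sqrt\mu}\partialphi{r}V^{\perp}=\vortdiff^{[x]}+\sqrt\mu\,\gradphi^{\perp}w$, which is indeed controlled by $\Ediff^{1/2}+C\sqrt\mu$. You do use this observation correctly for $\Fcinq$; just apply it consistently throughout. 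A related minor point: your claim that the horizontal part of $\vec{R}_{\sw}$ is trivial is not literally true (it equals $\sqrt\mu\,\partial_t V_{\sw}$ up to density corrections via~\eqref{eqn:nlsw}), but since all of $\vec{R}$ carries the global $\sqrt\mu$ prefactor, the conclusion stands.
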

\begin{proof}
The estimate~\eqref{eqn:restes:estimee:diff:0} comes from the product, commutator estimates~\eqref{apdx:eqn:pdt:algb},~\eqref{apdx:eqn:commutator:gen}. Similar estimations have already been performed in Proposition~\ref{prop:restes} and we refer to them for the details. Note however that the operator $\gradphi$ in $\Fdeux$ is of order $1$, and is responsible for the loss of derivatives in~\eqref{eqn:restes:estimee:diff:0}.\\ 
We now prove~\eqref{eqn:pression:estimee:diff}. Note that as $(\pointal{V}{s},\pointal{w}{s},\pointal{P}{s})$ satisfy the system~\eqref{eqn:euler_diff:quasilin}, which is of the form~\eqref{eqn:euler_phi} with additional source terms, we can write an elliptic equation satisfied by $\pointal{P}{s}$. It is of the form of~\eqref{eqn:pression}, with additional source terms. A slight adaptation of the pressure estimate~\eqref{eqn:pression:estimee} to take these source terms into account yields
\begin{equation}
\label{eqn:temp:8}
\begin{aligned}
\Vert \sqrt{\mu} \gradphi \tilde{P}, \partialphi{r} \tilde{P} \Vert_{H^{s}} \leq C \sqrt{\mu} &( \Vert \Vdiff, \sqrt{\mu} \wdiff,\sqrt{\mu} \rho \Vert_{H^{s+1}} + \Vert \vortdiff  \Vert_{H^{s}} \\
&+ \sqrt{\mu}|\etadiff|_{H^{s+1}} + \Vert(\Fun,\sqrt{\mu} \Fdeux,\Ftrois,\partialphi{t}\Fquatre) \Vert_{H^{s}} ),
\end{aligned}
\end{equation}
with $(\Fun,\Fdeux,\Ftrois,\Fquatre)$ defined in~\eqref{eqn:euler_diff:ordre0:restes}.
%$$\begin{aligned}
%\Fun &:= \epsilon \vec{\tilde{U}} \cdot \nabla V_{\sw} & \qquad 
%\Fdeux := \partialphi{t} w_{\sw} + \vec{U} \cdot \gradphi[x,r] w_{\sw}\\
%\Ftrois &:= \epsilon \Vdiff \cdot \nabla \eta_{\sw} & \qquad
%\Fquatre := \epsilon \frac{(1+r) \etadiff}{(\hb + \epsilon h)(\hb + \epsilon h_{\sw})} \partial_r w_{\sw}.
%\end{aligned}$$
%Using the definition~\eqref{eqn:alinhac:def} of Alinhac's good unknown, we now write
%$$ \Vert \Pdiffs \Vert_{L^2} \leq C \Vert \gradphi P \Vert_{H^{s-1}}.$$
Using~\eqref{eqn:restes:estimee:diff:0} to bound the source terms in~\eqref{eqn:temp:8}, this yields~\eqref{eqn:pression:estimee:diff}.\\
For the estimate~\eqref{eqn:restes:estimee:diff}, we only treat $\tilde{\Run}$, the other terms are treated the same way. Let us expand the terms in $\tilde{\Run}$ to get: 
\begin{equation}
\label{eqn:temp:7}
\begin{aligned}
\tilde{\Run} &= \commal{\diff}{\partialphi{t}}{V}   + \epsilon \vec{U} \cdot \commal{\diff}{\gradphi[x,r]}{V} + \frac{1}{\rhotot}\commal{\diff}{\gradphi}{P} \\ 
 &+ \epsilon [\diff , V \cdot] \gradphi V- \epsilon [\diff,V_{\sw}] \cdot \nabla V_{\sw} \\
 & +  \epsilon [\diff ; w ,\partialphi{r} V] - \epsilon \Vdiffs \cdot \gradphi V_{\sw}\\
 &+ [\diff, \frac{1}{\rhotot}] \gradphi P + [\diff, \frac{g\rhob}{\rhotot}]\nabla \eta_0\\
 &- \delta \epsilon \frac{\rho}{\rhotot}.\\
\end{aligned}
\end{equation}
For the first term, we write
$$ \commal{\diff}{\partialphi{t}}{V} = \diff (\etab + \epsilon \eta) \partialphi{t} \partialphi{r} V + (\partial \varphi)^{-T} [\diff;(\partial \varphi)^T-I_{d+2},\gradphi[t,x,r] V],$$
where we made $[\diff;I_{d+2},\gradphi[t,x,r] V] = 0$ appear. Thus, according to the particular form of $(\partial \varphi)^T$, the symmetric commutator only depends on $\partialphi{r}V$:
$$ \begin{aligned}\commal{\diff}{\partialphi{t}}{V} &= \diff (\etab + \epsilon \eta) \partialphi{t} \partialphi{r} V \\
&+ (\partial \varphi)^{-T} [\diff;(\epsilon \partial_t \eta, (\nabla(\etab+\epsilon \eta))^T, -\beta b + \epsilon \eta_0)^T ,\partialphi{r}(V-V_{\sw})], \end{aligned}$$
where we used $\partialphi{r} V_{\sw}=0$. Thus, we get by~\eqref{apdx:alinhac:R1}:
$$\Vert \commal{\diff}{\partialphi{t}}{V} \Vert_{L^2} \leq C (\epsilon \vee \beta) \Ediff.$$
The second and third terms in~\eqref{eqn:temp:7} are treated the same way, with the estimate~\eqref{eqn:pression:estimee:diff} for the third one. The sixth term in~\eqref{eqn:temp:7} is treated the same way as it only depends on $\partialphi{r} V$. \\
For the fourth and fifth terms in~\eqref{eqn:temp:7}, we write
$$ [\diff , V \cdot] \gradphi V- [\diff,V_{\sw}] \cdot \nabla V_{\sw} = [\diff , V \cdot] \gradphi \Vdiff - [\diff,\Vdiff] \cdot \nabla V_{\sw}.$$
The commutator estimate~\eqref{apdx:eqn:commutator:gen} then yields
$$ \Vert [\diff , V \cdot] \gradphi V- [\diff,V_{\sw}] \cdot \nabla V_{\sw} \Vert_{L^2} \leq C \Ediff^{\frac12}.$$
The last terms in~\eqref{eqn:temp:7} are bounded by the product estimate~\eqref{apdx:eqn:pdt:algb}, commutator estimate~\eqref{apdx:eqn:commutator:horizontal} as well as the estimate~\eqref{eqn:pression:estimee:diff} on the pressure.
\end{proof}
We are now in a position to prove~\eqref{eqn:energy:Ediff}. 
Applying $\Lambda^{s_0+1-k}\partial_r^k$ to~\eqref{eqn:euler_diff:ordre0} for $0\leq k \leq s_0+1$ yields a quasilinear system of the form~\eqref{eqn:quasilin:gen}. The estimate~\eqref{eqn:quasilin:gen:energy:L2} together with ~\eqref{eqn:restes:estimee:diff:0} then yields
$$\frac{d}{dt} \Edifflow \leq C(\epsilon \vee \beta\vee\frac{\delta}{\mu})\Ediff + C\sqrt{\mu} \Ediff^{\frac12}. $$
The computations are the same as in the Step 1 of the proof of Proposition~\ref{prop:energy} and we refer the reader to this part for more details. \\
Now, the system~\eqref{eqn:euler_diff:quasilin} is of the form~\eqref{eqn:quasilin:gen}. Applying~\eqref{eqn:quasilin:gen:energy:L2} and the estimates~\eqref{eqn:restes:estimee:diff}, we readily get
$$\frac{d}{dt} \left(\Vert((\hb + \epsilon h)\Vdiffs,\sqrt{\mu} (\hb + \epsilon h)\wdiffs, \taylor \etadiffs \Vert_{L^2}^2\right) \leq C(\epsilon \vee \beta\vee\frac{\delta}{\mu})\Ediff + C\sqrt{\mu}\Ediff^{\frac12}. $$
For the estimate on the vorticity, we notice that~\eqref{eqn:euler_diff:vort} is exactly of the form~\eqref{eqn:quasilin:vort:gen}. Therefore, the estimate~\eqref{eqn:quasilin:gen:vort:energy:L2} yields
$$\frac{d}{dt} \Vert \vortdiff \Vert_{H^{s-1}}^2 \leq C (\epsilon \vee \beta\vee\frac{\delta}{\mu}) \Ediff + \Vert \Fcinq \Vert_{H^{s-1}}\Vert \vortdiff \Vert_{H^{s-1}},$$
Where $\Fcinq$ is defined through~\eqref{eqn:vort:diff:reste}. We readily get, by the product estimate~\eqref{apdx:eqn:pdt:algb}:
$$\Vert \Fcinq \Vert_{H^{s-1}} \leq C \epsilon \Ediff^{\frac12}.$$
\end{proof}

\subsection{A note on a larger time of existence for~\eqref{eqn:euler_phi}}
\label{subsection:cheating}
In this subsection, we use the result in~\cite{BreschMetivier10} that the solution $(V_{\sw},\eta_{\sw})$ of~\eqref{eqn:nlsw} exists on a time of order $\frac{T}{\epsilon}$, even in the presence of large topographic variations. As explained in Remark~\ref{rk:tps_long}, this is a better result than the one in Theorem~\ref{thm:euler_phi}. We can use the result in~\cite{BreschMetivier10} to improve the time of existence in Theorem~\ref{thm:euler_phi}, although with well-prepared initial data (Assumption~\eqref{hyp:E0}), and state this result in Corollary~\ref{cor:tps_long}. In the specific regime described by Assumption~\eqref{hyp:mu}, this boils down to a time of order $T \log(\frac{1}{\epsilon})$. This is much smaller that the large time $\frac{T}{\epsilon}$ from~\cite{BreschMetivier10}, and is called the Ehrenfest time in semi-classical analysis~\cite{Shepelyansky2020}.\\
In order to simplify the notations, we now assume $\beta = 1$ (large topographic variations) and $\delta \leq \mu$ (weak density variations). \\
Let $(V_{\sw},\eta_{\sw}) \in C^0([0,T/\epsilon[,(H^{s+1})^{d+1}(\R^d))$ the solution to the equations~\eqref{eqn:nlsw} with initial conditions in $H^{s+1}$, and let $M_{\sw}>0$ such that~\eqref{hyp:Msw} is satisfied.
In the following corollary, we assume that the initial data~\eqref{eqn:euler_euleriennes:ci} is well-prepared in the sense of~\eqref{hyp:E0}, and assume the regime
\begin{hyp}
\label{hyp:mu}
\mu \leq \epsilon.
\end{hyp}We take $M>0$ such that 
\begin{hyp}
\label{hyp:M:tpslong}
\Vert V_{\ini},\sqrt{\mu}w_{\ini},\sqrt{\mu} \rho_{\ini} \vert_{H^s} + \vert (\eta_0)_{\ini}\vert_{H^s} \leq M.
\end{hyp}We are in a position to state and prove the following corollary.
\begin{corollary_alpha}
\label{cor:tps_long}
Let $s_0,s \in \N$, $s_0 > \frac{d+1}{2}$, $s\geq s_0+2$. Let $\mu > 0$, $\beta=1$, $\epsilon,\delta \geq 0$ with $\delta \leq \mu$. Let $(\eta_0)_{\ini} \in H^s(\R^d)$, $(V_{\ini},w_{\ini}) \in H^s(S)^{d+1}$  satisfying~\eqref{hyp:H},\eqref{hyp:taylor}, the incompressibility condition in~\eqref{eqn:euler_phi:bc}, the well-preparation~\eqref{hyp:E0}, the regime~\eqref{hyp:mu} and~\eqref{hyp:M:tpslong}.
Then the solution of~\eqref{eqn:euler_phi} $(V,w,\rho,\eta_0)$ given by Theorem~\ref{thm:euler_phi} is in $C^0([0,\log(\frac{1}{\epsilon}) T],H^{s}(S)^{d+2} \times H^s(\R^d))$.
%and moreover: %Faux car ca explose non ?!
%$$\sup_{t\in[0,T/\epsilon]} \left( \Vert (V - V_{\sw})(t,\cdot) \Vert_{H^{s}} + \vert (\eta_0-\eta_{\sw})(t,\cdot) \vert_{H^s} \right) \leq C \sqrt{\mu},$$
%where $C$ depends on $M$ and $M_{\sw}$.
\end{corollary_alpha}
\begin{remark}
%In the classical asymptotic regime
%\begin{hyp}
%\label{hyp:mu}
%\mu \leq \epsilon,
%\end{hyp}
%the time of existence in Corollary~\ref{cor:tps_long} boils down to a time of order $\log(\frac{1}{\epsilon})$, see Remark~\ref{rk:tps_long}.
The proof of Corollary~\ref{cor:tps_long} yields a slightly more precise result. Indeed, we do not use the assumption~\eqref{hyp:mu}, and in this case the time of existence is $(\log(\frac{1}{\mu}) \wedge \frac{1}{\epsilon}) T$.
\end{remark}
\begin{proof}
We prove the slightly more precise time of existence $(\log(\frac{1}{\mu}) \wedge \frac{1}{\epsilon}) T$, which boils down to $\log\frac{1}{\epsilon}$ if moreover we assume $\mu \leq \epsilon$.\\
First, the time of existence in Corollary~\ref{cor:tps_long} cannot exceed $\frac{1}{\epsilon}$, since we compare the solution of~\eqref{eqn:euler_phi} to the solution of~\eqref{eqn:nlsw}, the latter one being defined on a time interval of length $\frac{1}{\epsilon}$ thanks to~\cite{BreschMetivier10}. We thus assume $\log(\frac{1}{\mu}) \leq \frac{1}{\epsilon}$ for the remaining of the proof.
Let
\begin{equation}
\label{eqn:blowup_with_cte}
\begin{aligned}
T_* := \sup\{ T \leq \frac{1}{\epsilon},
 &\sup_{0 \leq t\leq T} \Vert (V,\sqrt{\mu}w,\sqrt{\mu} \rho)(t,\cdot) \Vert_{H^s} + \vert \eta_0(t,\cdot) \vert_{H^s} \leq 2M,\\
&\inf_{t\leq T} \inf_{(x,r)\in S} \taylor \geq c_*/2 \},\\
\end{aligned}
\end{equation}
so that in particular the maximal time of existence of the solution $(V,w,\rho,\eta_0)$ to~\eqref{eqn:euler_phi} $T_{\max}$ satisfies $T_{\max} \geq T_*$ by the blow-up criterion from Theorem~\ref{thm:euler_phi}. 

We start from the estimate~\eqref{eqn:euler_diff:estimate} in the special case where $\delta\leq\mu,\beta=1$ and with the well-preparation~\eqref{hyp:E0}, to get, by triangular inequality
\begin{equation}
\label{eqn:cheating:temp}
\Vert (V,\sqrt{\mu}w,\sqrt{\mu} \rho)(t,\cdot) \Vert_{H^s} + \vert \eta_0(t,\cdot) \vert_{H^s} \leq \Vert (V_{\sw},\sqrt{\mu}w_{\sw})(t,\cdot) \Vert_{H^s} + \vert \eta_{\sw}(t,\cdot) \vert_{H^s} + C \sqrt{\mu} e^{Ct}.
\end{equation}
Evaluating~\eqref{eqn:cheating:temp} at time $t$ with $t\leq \frac{1}{4C}\log(\frac{1}{\mu})$, the first term of the right-hand side of~\eqref{eqn:cheating:temp} stays bounded by $2M$ thanks to~\cite[Theorem 2.1]{BreschMetivier10}, and $\sqrt{\mu}( 1+ \frac{1}{4C}|\log \mu |) e^{\log(\frac{1}{\mu^{1/4}})}$ is bounded from above, uniformly in $\mu$. Hence the first condition in the definition of the set in~\eqref{eqn:blowup_with_cte} can only fail after the time $t_*:=\frac{1}{4C}\log(\frac{1}{\mu})$.\\
For the second condition in~\eqref{eqn:blowup_with_cte}, we use the bound on $\taylor$~\eqref{eqn:taylor:dt} to get
\begin{equation}
\label{eqn:cheating:temp:2}
\vert \taylor(t,\cdot) - g\rhob \vert_{L^{\infty}} \leq C \epsilon (\Vert (V,\sqrt{\mu} w,\sqrt{\mu} \rho)\Vert_{H^{s_0+2}} + \vert \eta_0 \vert_{H^{s_0+1}}).
\end{equation}
Together with the bound~\eqref{eqn:cheating:temp}, this yields
\begin{equation}
\label{eqn:cheating:temp:3}
\sup_{t\leq t_*}\vert \taylor(t,\cdot) - g\rhob \vert_{L^{\infty}} \leq C M \epsilon.
\end{equation}
Then $\inf\limits_{t\leq t_*}\inf\limits_{x\in \R^d}\taylor(t,x) \geq \frac12 c_*$, if $\epsilon \leq (g\rhob - c_*/2)/(CM)$. Hence the second condition in~\eqref{eqn:blowup_with_cte} can only fail after the time $t_*$. This eventually yields $T_{\max} \geq T_* \geq t_* = \frac{1}{2C} \log(\frac{1}{\mu})$ and this concludes the proof.
\end{proof}

%\begin{remark}
%In the case of flat bottom (that is, $\beta = 0$), Theorem~\ref{thm:euler_phi} provides the family of solutions of~\eqref{eqn:euler_phi} needed for Theorem~\ref{thm:convergence:lake} to hold. In the case of non-flat bottom, the solutions provided by Theorem~\ref{thm:euler_phi} only exists on the shorter time-interval $[0,\frac{T}{\epsilon \vee \beta \vee \mu}[$. The issue of large-time existence with the presence of a non-flat bottom without well-prepared CI is, to the best of our knowledge, an open problem. 
%\end{remark}

%with $\E(0) \leq C e^{-(1+\frac{\beta}{\epsilon})T}$ given by Assumption~\eqref{hyp:wp} and $\delta = \sqrt{\mu} \leq \epsilon e^{-(1+\frac{\beta}{\epsilon})T}$ according to~\eqref{hyp:mu}
\appendix
\section{Product, commutator and composition estimates}
\label{appendix:A}
We now state technical results used in this study.
\begin{lemma}[Integration by parts]
\label{apdx:lemma:IPP}
If $(\vec{F},g)  \in H^{1}(S)^{d+1}\times H^{1}(S) $, then\\
\begin{equation}
\label{apdx:eqn:IPP}
\begin{aligned}
\int_{S} (\hb+\epsilon h) g \gradphi[x,r] \cdot \vec{F} dx dr= &- \int_{\R^d} \vec{F}_{|r=0} \cdot\coo{\epsilon \nabla \eta_0 \\ -1}  g_{|r=0} dx+ \int_{\R^d} \vec{F}_{|r=-1} \cdot\coo{\beta \nabla b \\ -1} g_{|r=-1} dx  \\
&-\int_{S} (\hb+\epsilon h) \vec{F} \cdot  \gradphi[x,r] g dx dr.
\end{aligned}
\end{equation}
\end{lemma}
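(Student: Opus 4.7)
The proof is a direct computation based on a Piola-type identity that rewrites the $\varphi$-weighted divergence as a standard Euclidean divergence, after which only the usual integration by parts in $(x,r)$ is required. Write $\vec{F}=(F^{[x]},F^{[r]})^T$ with $F^{[x]}\in\R^d$ and $F^{[r]}\in\R$. Using the definition \eqref{eqn:gradphi:def} of $\gradphi[x,r]$, one has
\begin{equation*}
(\hb+\epsilon h)\,\gradphi[x,r]\cdot\vec F \;=\; (\hb+\epsilon h)\,\nabla_x\cdot F^{[x]} \;-\; \nabla(\etab+\epsilon\eta)\cdot\partial_r F^{[x]} \;+\; \partial_r F^{[r]}.
\end{equation*}
The key point I would first establish is the identity
\begin{equation*}
(\hb+\epsilon h)\,\gradphi[x,r]\cdot\vec F \;=\; \nabla_x\cdot\bigl((\hb+\epsilon h)F^{[x]}\bigr) \;+\; \partial_r\!\Bigl(F^{[r]} - \nabla(\etab+\epsilon\eta)\cdot F^{[x]}\Bigr),
\end{equation*}
which follows by expanding the right-hand side and using the crucial cancellation $\partial_r\nabla(\etab+\epsilon\eta)=\nabla(\hb+\epsilon h)$. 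This is the Piola identity for the diffeomorphism $\varphi$ and reduces matters to a purely Euclidean divergence.

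Next, multiplying by $g$ and integrating over $S=\R^d\times[-1,0]$, I would integrate by parts in $x$ (no boundary contribution since $\R^d$ is boundaryless) and in $r$ (picking up traces at $r=0$ and $r=-1$). The horizontal part yields $-\int_S(\hb+\epsilon h)F^{[x]}\cdot\nabla_x g$, while the vertical part gives
\begin{equation*}
\Bigl[\int_{\R^d} g\bigl(F^{[r]} - \nabla(\etab+\epsilon\eta)\cdot F^{[x]}\bigr)\Bigr]_{r=-1}^{r=0} \;-\; \int_S \partial_r g\,\bigl(F^{[r]} - \nabla(\etab+\epsilon\eta)\cdot F^{[x]}\bigr).
\end{equation*}

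The boundary values come from evaluating $\nabla(\etab+\epsilon\eta)$ at $r=0$ and $r=-1$. From the explicit change of variables \eqref{eqn:phi:def:bary} one has $(\etab+\epsilon\eta)|_{r=0}=\epsilon\eta_0$ and $(\etab+\epsilon\eta)|_{r=-1}=-1+\beta b$, so $\nabla(\etab+\epsilon\eta)|_{r=0}=\epsilon\nabla\eta_0$ and $\nabla(\etab+\epsilon\eta)|_{r=-1}=\beta\nabla b$. Rewriting $F^{[r]}-\nabla(\etab+\epsilon\eta)\cdot F^{[x]}=-\vec F\cdot(\nabla(\etab+\epsilon\eta),-1)^T$ then recovers the two boundary integrals in \eqref{apdx:eqn:IPP} with the correct signs.

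Finally, I would reassemble the two remaining bulk terms. Adding
$-\int_S(\hb+\epsilon h)F^{[x]}\cdot\nabla_x g$ and $-\int_S\partial_r g\,(F^{[r]}-\nabla(\etab+\epsilon\eta)\cdot F^{[x]})$ and using the definition \eqref{eqn:gradphi:def} of $\gradphi[x,r]$ applied to $g$ shows that they sum precisely to $-\int_S(\hb+\epsilon h)\vec F\cdot\gradphi[x,r]g$, which completes the identity. There is no real obstacle: the only subtle point is to justify integration by parts for $H^1$ data in the presence of the factor $\hb+\epsilon h$, which is handled by the standing regularity assumptions \eqref{hyp:Hb:reg}, \eqref{hyp:H:reg} (making $\hb+\epsilon h$ Lipschitz) together with a standard approximation by smooth compactly supported functions.
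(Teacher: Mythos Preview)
Your proof is correct. The paper states this lemma without proof, so there is nothing to compare against; your argument via the Piola-type identity $(\hb+\epsilon h)\gradphi[x,r]\cdot\vec F = \nabla_x\cdot((\hb+\epsilon h)F^{[x]}) + \partial_r(F^{[r]} - \nabla(\etab+\epsilon\eta)\cdot F^{[x]})$ followed by standard integration by parts is exactly the natural route and all steps check out.
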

%\begin{proof}
%We use the diffeomorphism $\varphi$ defined through~\eqref{eqn:phi:def},\eqref{eqn:phi:def:bary} to change coordinates in the integral on the left-hand side of~\eqref{apdx:eqn:IPP}. As $\hb + \epsilon h$ is the Jacobian of $\varphi$, one gets
%$$\int_{S} (\hb+\epsilon h) g \gradphi[x,r] \cdot \vec{F} dx dr = \int_{\Omega_t}  g \circ \varphi \nabla_{x,z} \cdot (\vec{F} \circ \varphi ) dx dz,$$
%where $\Omega_t$ is the domain occupied by the fluid at time $t$. Integrating by parts using the standard divergence theorem and changing coordinates using $\varphi^{-1}$, we get
%\begin{equation}
%\label{apdx:eqn:ipp:temp}
%\begin{aligned}
%\int_{S} (\hb+\epsilon h) g \gradphi[x,r] \cdot \vec{F} dx dr= & \int_{\partial \Omega_t} \vec{F} \cdot \vec{n} g ds &-\int_{S} (\hb+\epsilon h) \vec{F} \cdot  \gradphi[x,r] g dx dr,
%\end{aligned}
%\end{equation}
%where $ds$ denotes the surface integral and $\vec{n}$ the unitary outward normal vector to $\partial \Omega_t$. Using that the boundary $\partial \Omega_t$ of the fluid domain $\Omega_t$ has two connected components parameterized as graphs of the bottom $b$ and free-surface $\eta_0$, we get the result. 
%\end{proof}
\begin{lemma}[Equivalence of norms by change of variables]
Let $t_0 > d/2$, $s \in \N$ with $s \geq t_0+\frac32$, $f \in H^{s}(S)$. Let $\eta \in H^s(S)$, satisfying~\eqref{hyp:H} as well as
$$\Vert \eta \Vert_{H^s}\leq M,$$
for some $M > 0$. We write $\check{f}(t,x,r) := f(t,x,r+\eta(t,x,r))$. Then there exists $C > 0$ depending only on $M$ such that
\begin{equation}
\label{apdx:eqn:equiv}
\Vert \check{f}\Vert_{H^s} \leq C \Vert f\Vert_{H^s}.
\end{equation}
\end{lemma}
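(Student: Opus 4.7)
The plan is to proceed by induction on $s \in \N$, using the chain rule and tame product estimates.

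For the base case $s = 0$, I would perform the vertical change of variable $r' = r + \eta(t,x,r)$ at each fixed $(t,x)$. Since \eqref{hyp:H} (together with the form of $\hb + \epsilon h$) gives uniform two-sided bounds on the Jacobian $1 + \partial_r \eta$, a direct computation yields
\[
\Vert \check f\Vert_{L^2(S)}^2 = \int_S |f(t,x,r+\eta)|^2\,dx\,dr \leq \frac{1}{h_*}\Vert f\Vert_{L^2(S)}^2.
\]

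For the inductive step, suppose the bound holds at level $s-1$ for every change of variable of the form considered, with the constant depending on an upper bound of the $H^{s-1}$-norm of the defining function. Apply either a horizontal derivative $\partial_{x_i}$ or the vertical derivative $\partial_r$ to $\check f = f \circ \Phi$, with $\Phi(t,x,r) := (t,x,r+\eta(t,x,r))$. The chain rule gives
\begin{equation*}
\partial_{x_i}\check f = (\partial_{x_i} f)\circ \Phi + (\partial_{x_i} \eta)\, (\partial_r f)\circ\Phi, \qquad \partial_r \check f = (1+\partial_r \eta)\,(\partial_r f)\circ\Phi.
\end{equation*}
Each term on the right is a product of a function depending only on $\eta$ (bounded in $H^{s-1}$ by $\Vert \eta\Vert_{H^s} \leq M$) with the composition $(\partial_{x_i} f)\circ\Phi$ or $(\partial_r f)\circ\Phi$. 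By the inductive hypothesis applied to $\partial_{x_i} f,\partial_r f \in H^{s-1}(S)$, these compositions are controlled in $H^{s-1}(S)$ by $C\Vert f\Vert_{H^s(S)}$. A tame product estimate of Moser type (of the form $\Vert uv\Vert_{H^{s-1}} \leq C(\Vert u\Vert_{H^{s-1}}\Vert v\Vert_{L^\infty} + \Vert u\Vert_{L^\infty}\Vert v\Vert_{H^{s-1}})$) then yields
\[
\Vert \partial_{x_i}\check f\Vert_{H^{s-1}(S)} + \Vert \partial_r \check f\Vert_{H^{s-1}(S)} \leq C(1+\Vert \eta\Vert_{H^s(S)})\Vert f\Vert_{H^s(S)},
\]
with $C$ depending only on $M$. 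Combining with the base case and iterating over all derivatives of total order at most $s$ concludes the induction.

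The main obstacle will be the tame product estimate at the critical step: one must control $\Vert (\partial\eta)\cdot (g \circ \Phi)\Vert_{H^{s-1}}$ without losing derivatives on either factor. This is precisely where the threshold $s \geq t_0 + \tfrac32$ is used, ensuring (via Sobolev embedding on the $(d+1)$-dimensional strip $S$, or via trace-based arguments near the boundary) that $L^\infty$-control on $\partial \eta$ and on the composition $g\circ\Phi$ is available at the appropriate regularity. Once this is in place, the induction runs without any loss, and the final constant $C$ depends only on $h_*, h^*$ and $M$, as required.
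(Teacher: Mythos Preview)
Your proposal is correct and follows essentially the same strategy as the paper: expand derivatives of $\check f$ via the chain rule into products of derivatives of $\eta$ with $(\partial^{\beta} f)\circ\varphi$, then control each piece by a product estimate together with the $L^2$ base case. The paper does this in one stroke by writing $\partial^{\alpha}\check f$ directly in terms of the $\varphi$-derivatives and invoking the $L^2$ product estimate~\eqref{apdx:eqn:pdt:tame}, whereas you organise the same computation as an induction on $s$; the content is identical.
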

\begin{proof}
Let $\alpha \in \N^{d+1}$ a multi-index of norm $|\alpha| \leq s$. We write
$$\partial^{\alpha} \check{f} = ((\partial^{\varphi})^{\alpha} f)\circ \varphi,$$
where $\partialphi{}$ is defined through~\eqref{eqn:gradphi:def}. Thus, by the product estimate~\eqref{apdx:eqn:pdt:tame}, we get~\eqref{apdx:eqn:equiv}.
\end{proof}
\begin{lemma}[A continuous embedding for anisotropic spaces]
\label{apdx:lemma:embedding}
Let $s \in \R$, and $f \in H^{s+\frac{1}{2},1}$. Then $f \in C^0([-1,0],H^s(\R^d))$ and moreover, there exists $C > 0$ such that
\begin{equation}
\label{apdx:eqn:embedding}
 \max\limits_{0 \leq  r \leq 1}|f(\cdot, r) |_{H^s} \leq C \Vert f \Vert_{H^{s+\frac12,1}}. 
\end{equation}
In particular, denoting by $f_{|r=-1}$ the trace of $f$ at the bottom of the strip $S$ (resp. $f_{|r=-1}$ at the surface), we can write
\begin{equation}
\label{apdx:eqn:trace}
\vert f_{|r=-1} \vert_{H^s(\R^d)} + \vert f_{|r=0} \vert_{H^s(\R^d)}\leq C \Vert f \Vert_{H^{s+\frac12,1}(S)},
\end{equation}
\end{lemma}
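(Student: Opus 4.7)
The plan is to work with the partial Fourier transform $\widehat{f}(\xi,r)$ in the horizontal variable $x$ only, so that for every $r\in[-1,0]$,
\[ |f(\cdot,r)|_{H^s(\R^d)}^2 = \int_{\R^d} (1+|\xi|^2)^s |\widehat{f}(\xi,r)|^2 \, d\xi, \]
and the anisotropic norm reads
\[ \|f\|_{H^{s+\frac12,1}(S)}^2 = \int_{\R^d}\!\!\int_{-1}^0 \bigl[(1+|\xi|^2)^{s+\frac12}|\widehat{f}(\xi,r)|^2 + (1+|\xi|^2)^{s-\frac12}|\partial_r\widehat{f}(\xi,r)|^2\bigr]\, dr\, d\xi. \]
For $f$ smooth enough (which I will handle first and then recover the general case by density), the fundamental theorem of calculus applied to $r\mapsto |\widehat{f}(\xi,r)|^2$ yields, for any $r,r_0\in[-1,0]$,
\[ |\widehat{f}(\xi,r)|^2 = |\widehat{f}(\xi,r_0)|^2 + 2\int_{r_0}^{r} \Re\bigl(\overline{\widehat{f}(\xi,r')}\,\partial_r\widehat{f}(\xi,r')\bigr)\, dr'. \]

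\noindent The next step is to average this identity over $r_0\in[-1,0]$, which removes any dependence on $r_0$ and produces the bound
\[ |\widehat{f}(\xi,r)|^2 \leq \int_{-1}^0 |\widehat{f}(\xi,r_0)|^2\, dr_0 + 2\int_{-1}^0 |\widehat{f}(\xi,r')|\,|\partial_r\widehat{f}(\xi,r')|\, dr'. \]
I then multiply by $(1+|\xi|^2)^s$ and integrate in $\xi$. On the first term, the factor $(1+|\xi|^2)^s\leq (1+|\xi|^2)^{s+\frac12}$ is bounded directly by $\|f\|_{H^{s+\frac12,0}(S)}^2$. On the second, I split the weight as $(1+|\xi|^2)^s = (1+|\xi|^2)^{\frac{s+1/2}{2}}(1+|\xi|^2)^{\frac{s-1/2}{2}}$ and apply Cauchy--Schwarz in $\xi$, then Cauchy--Schwarz in $r'$, to obtain
\[ |f(\cdot,r)|_{H^s(\R^d)}^2 \leq \|f\|_{H^{s+\frac12,0}(S)}^2 + 2\,\|f\|_{H^{s+\frac12,0}(S)}\,\|\partial_r f\|_{H^{s-\frac12,0}(S)} \leq C\,\|f\|_{H^{s+\frac12,1}(S)}^2, \]
which is precisely the quantitative bound $\max_{r\in[-1,0]} |f(\cdot,r)|_{H^s} \leq C\|f\|_{H^{s+\frac12,1}}$.

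\noindent Finally, for the continuity statement, I first establish continuity when $f$ is smooth (then $r\mapsto \widehat{f}(\xi,r)$ is continuous and dominated convergence gives continuity of $|f(\cdot,r)|_{H^s}$, and in fact of $f(\cdot,r)$ in $H^s$). For general $f\in H^{s+\frac12,1}(S)$, I approximate by a sequence $f_n$ of smooth, compactly supported functions converging to $f$ in $H^{s+\frac12,1}(S)$; the estimate just proved applied to $f-f_n$ shows that $r\mapsto f_n(\cdot,r)$ converges to $r\mapsto f(\cdot,r)$ uniformly on $[-1,0]$ in the $H^s(\R^d)$-norm, hence the limit is continuous. The trace inequality \eqref{apdx:eqn:trace} then follows by specializing $r=0$ and $r=-1$. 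I do not expect a real obstacle here: the only delicate point is the correct splitting of the weight $(1+|\xi|^2)^s$ so that Cauchy--Schwarz produces exactly the two contributions appearing in $\|\cdot\|_{H^{s+\frac12,1}}$, and then being careful with density for negative or non-integer $s$, which is standard.
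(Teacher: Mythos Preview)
Your proof is correct and is essentially the standard argument one finds in the references the paper cites (the paper gives no self-contained proof here, only pointing to \cite[Lemma A.1]{Duchene2022} and \cite[Proposition 2.12]{Lannes2013}). Your combination of the partial Fourier transform in $x$, the fundamental theorem of calculus on $r\mapsto|\widehat f(\xi,r)|^2$, the averaging in $r_0$, and the weight splitting $(1+|\xi|^2)^s=(1+|\xi|^2)^{(s+1/2)/2}(1+|\xi|^2)^{(s-1/2)/2}$ followed by Cauchy--Schwarz is precisely the classical route; the density argument for continuity is likewise standard.
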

\begin{proof}
See for instance~\cite[Lemma A.1]{Duchene2022} or~\cite[Proposition 2.12]{Lannes2013}.
\end{proof}

\begin{lemma}[Standard product estimates in Sobolev spaces $H^s(\R^d)$]
Let $d \in \N^*$, $t_0 > d/2$, $s\in \R$, $s_1,s_2 \in \R$ such that $s\leq s_1$, $s\leq s_2$, $s_1 + s_2 \geq 0$ and $s \leq s_1 + s_2 - t_0$. There exists a constant $C > 0$ such that for any $f \in H^{s_1}(\R^d)$, $g \in H^{s_2}(\R^d)$, we can write
\begin{equation}
\label{apdx:eqn:pdt:s1s2}
 | fg |_{H^s(\R^d)} \leq C |f|_{H^{s_1}} |g|_{H^{s_2}}. 
\end{equation}
\end{lemma}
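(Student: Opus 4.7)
The plan is to use Bony's paraproduct decomposition
\[
fg = T_f g + T_g f + R(f,g),
\]
where $T_f g := \sum_{j} S_{j-1} f \, \Delta_j g$ and $R(f,g) := \sum_{|j-k|\leq 1} \Delta_j f \, \Delta_k g$, with $\Delta_j$ and $S_j$ the standard Littlewood-Paley projectors. The strategy is to estimate each of the three pieces in $H^s(\R^d)$ separately using the Sobolev hypotheses on $f$ and $g$.

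For the paraproduct $T_f g$, each summand $S_{j-1} f \, \Delta_j g$ has Fourier support in an annulus of size $2^j$, so by almost-orthogonality
\[
\|T_f g\|_{H^s}^2 \lesssim \sum_j 2^{2js} \|S_{j-1} f\|_{L^\infty}^2 \|\Delta_j g\|_{L^2}^2.
\]
I would then split into cases. If $s_1 > t_0$, Sobolev embedding $H^{s_1} \hookrightarrow L^\infty$ gives $\|S_{j-1} f\|_{L^\infty} \lesssim |f|_{H^{s_1}}$, and the condition $s \leq s_2$ allows one to bound $2^{2js} \|\Delta_j g\|_{L^2}^2 \leq 2^{2j s_2} \|\Delta_j g\|_{L^2}^2$. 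If $s_1 \leq t_0$, one instead uses the Bernstein-type bound $\|S_{j-1} f\|_{L^\infty} \lesssim 2^{j(t_0-s_1)} |f|_{H^{s_1}}$, and the hypothesis $s \leq s_1 + s_2 - t_0$ absorbs the extra power of $2^j$. Summing over $j$ yields $\|T_f g\|_{H^s} \lesssim |f|_{H^{s_1}} |g|_{H^{s_2}}$. The term $T_g f$ is treated symmetrically by swapping the roles of $f,g$ and invoking $s \leq s_1$.

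The remainder $R(f,g)$ is more delicate because the Fourier support of each summand $\Delta_j f \, \Delta_k g$ (with $|j-k|\leq 1$) is only a ball of radius $\sim 2^j$, not an annulus, so one must re-project dyadically: $\Delta_\ell R(f,g) = \sum_{j \geq \ell - c} \Delta_\ell(\Delta_j f \, \Delta_k g)$. Using Bernstein's inequality $\|\Delta_j f\|_{L^\infty} \lesssim 2^{j t_0}\|\Delta_j f\|_{L^2}$ (or its symmetric counterpart on $\Delta_k g$) and a Cauchy-Schwarz argument on the $\ell^2$ sequences $(2^{js_1}\|\Delta_j f\|_{L^2})_j$ and $(2^{ks_2}\|\Delta_k g\|_{L^2})_k$, one obtains a bound on $\|\Delta_\ell R(f,g)\|_{L^2}$ summable in $\ell$ precisely under the joint hypotheses $s_1+s_2 \geq 0$ and $s \leq s_1+s_2-t_0$.

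The main obstacle is the bookkeeping at the critical threshold $s = s_1+s_2-t_0$ and at the boundary cases $s = s_1$ or $s = s_2$, where no power of $2^j$ can be wasted; here it is essential to use the full $\ell^2$ summability of the Littlewood-Paley characterization of $H^s$ and to apply Cauchy-Schwarz rather than a crude absolute-value bound. A more elementary alternative would avoid paraproducts altogether and work directly on the Fourier side: writing $\widehat{fg}(\xi) = \int \widehat{f}(\xi-\eta)\widehat{g}(\eta)\,d\eta$, splitting the integral according to whether $|\eta|\leq|\xi-\eta|$ or the converse, applying a Peetre-type inequality on $\langle\xi\rangle^s$, and concluding via Cauchy-Schwarz in $\eta$ using the observation that $\int \langle\eta\rangle^{-2t_0}\,d\eta < \infty$ when $t_0 > d/2$.
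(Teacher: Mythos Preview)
Your proof sketch is correct and substantially more detailed than what the paper itself offers: the paper does not prove this lemma at all, but simply cites \cite[Proposition B.2]{Lannes2013}. The paraproduct argument you outline (or the direct Fourier-side alternative you mention at the end) is precisely the standard route to such estimates and is, in essence, what one would find in that reference. One small remark: in the case $s_1 \leq t_0$ for the paraproduct $T_f g$, the sharp Bernstein bound is $\|S_{j-1}f\|_{L^\infty} \lesssim 2^{j(d/2 - s_1)_+}|f|_{H^{s_1}}$ rather than with exponent $t_0 - s_1$, but since $t_0 > d/2$ strictly your weaker bound is still valid and matches the hypothesis $s \leq s_1+s_2-t_0$ exactly; the strict inequality $t_0 > d/2$ is what buys the endpoint $s = s_1+s_2-t_0$ and avoids logarithmic losses in the borderline frequency sums.
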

\begin{proof}
See for instance~\cite[Proposition B.2]{Lannes2013} and references therein.
\end{proof}

\begin{lemma}[Product estimates in anisotropic spaces]
\label{apdx:pdt}
Let $d\in \N^*$, $t_0 > d/2$, $s \in \R$ and $k \in \N$ with $0 \leq k \leq s$.
\begin{itemize}[label=\textbullet]
\item There exists $C > 0$ such that for any $f \in L^2(S_r)$ and $g \in H^{t_0+\frac12,1}(S_r)$,  we can write
\begin{equation}
\label{apdx:eqn:pdt:tame}
\Vert fg\Vert_{L^2} \leq C \Vert f \Vert_{L^2} \Vert g \Vert_{H^{t_0+1/2,1}}.
\end{equation}
\item If $s \geq t_0+\frac12$, $k \geq 1$, then there exists $C > 0$ such that for any $g\in H^{s,k}$, $f \in H^{s,k} $
\begin{equation}
\label{apdx:eqn:pdt:algb}
\Vert fg\Vert_{H^{s,k}} \leq C \Vert f \Vert_{H^{s,k}} \Vert g \Vert_{H^{s,k}}.
\end{equation}
\item If $s \geq t_0+\frac32$, $k \geq 2$, then there exists $C > 0$ such that for any $g\in H^{s,k}$, $f \in H^{s,k} $
\begin{equation}
\label{apdx:eqn:pdt:algb:tame}
\Vert fg\Vert_{H^{s,k}} \leq C \Vert f \Vert_{H^{s-1,k-1}} \Vert g \Vert_{H^{s,k}} + C \Vert f \Vert_{H^{s,k}} \Vert g \Vert_{H^{s-1,k-1}}.
\end{equation}
\item If $f \in W^{k,\infty}_r$ is independent of $x$ and $g \in H^{s,k}$, then 
\begin{equation}
\label{apdx:eqn:pdt:infty}
 \Vert fg \Vert_{H^{s,k}} \leq C | f |_{W^{k,\infty}} \Vert g \Vert_{H^{s,k}}.
 \end{equation}
\end{itemize}
\end{lemma}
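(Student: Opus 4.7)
The unified strategy for all four inequalities is to (i) expand vertical derivatives via the Leibniz rule, (ii) for each resulting product apply the standard horizontal product estimate \eqref{apdx:eqn:pdt:s1s2} pointwise in $r\in[-1,0]$, and (iii) integrate in $r$ after using the anisotropic embedding \eqref{apdx:eqn:embedding} to bound the factor one wishes to place in $L^\infty$. Only the index bookkeeping differs between the four items.

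For \eqref{apdx:eqn:pdt:tame}, the embedding \eqref{apdx:eqn:embedding} with $s=t_0$ together with the standard Sobolev embedding $H^{t_0}(\R^d)\hookrightarrow L^\infty(\R^d)$ gives $\Vert g\Vert_{L^\infty(S)}\leq C\Vert g\Vert_{H^{t_0+\frac12,1}}$, and a direct Hölder inequality $\Vert fg\Vert_{L^2(S)}\leq \Vert f\Vert_{L^2}\Vert g\Vert_{L^\infty(S)}$ concludes.

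For \eqref{apdx:eqn:pdt:algb}, I would write $\Vert fg\Vert_{H^{s,k}} = \sum_{l\leq k}\Vert \Lambda^{s-l}\partial_r^l(fg)\Vert_{L^2(S)}$ and expand $\partial_r^l(fg)=\sum_{l'\leq l}\binom{l}{l'}\partial_r^{l'}f\,\partial_r^{l-l'}g$. For each term I apply \eqref{apdx:eqn:pdt:s1s2} with $s_1=s-l'$ and $s_2=s-(l-l')$ (so $s_1+s_2-t_0=2s-l-t_0\geq s-l$ since $s\geq t_0+\tfrac12$, and $s_1,s_2\geq s-l$), to obtain pointwise in $r$
\begin{equation*}
\Vert \Lambda^{s-l}(\partial_r^{l'}f\,\partial_r^{l-l'}g)(\cdot,r)\Vert_{L^2(\R^d)}
\leq C\,|\partial_r^{l'}f(\cdot,r)|_{H^{s-l'}}\,|\partial_r^{l-l'}g(\cdot,r)|_{H^{s-(l-l')}}.
\end{equation*}
Assuming WLOG $l'\leq l-l'$, I integrate in $r$ by placing the $f$-factor in $L^\infty_r$ and the $g$-factor in $L^2_r$, using \eqref{apdx:eqn:embedding}:
\begin{equation*}
\sup_r|\partial_r^{l'}f(\cdot,r)|_{H^{s-l'}}\leq C\Vert f\Vert_{H^{s-l'+\frac12,\,l'+1}}\leq C\Vert f\Vert_{H^{s,k}},
\end{equation*}
where the last inequality uses $s-l'+\tfrac12\leq s$ (since $l'\geq\tfrac12$ is false but $l'+1\leq k+1$ and one loses $\tfrac12$ derivative which is absorbed by $s\geq t_0+\tfrac12$; more precisely, since $l'\leq l/2\leq k/2$, one verifies $s-l'+\tfrac12\leq s$ and $l'+1\leq k$ under $k\geq 1$ with a mild case-check at $k=1$).

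Estimate \eqref{apdx:eqn:pdt:algb:tame} is the tame variant. I follow the same Leibniz expansion but exploit the extra assumption $s\geq t_0+\tfrac32$, $k\geq 2$ to ensure that after taking one derivative off either $f$ or $g$ the corresponding factor still satisfies the embedding hypotheses of \eqref{apdx:eqn:embedding}. The split into the two terms of the right-hand side corresponds to whether the $L^\infty_r$-factor is $f$ (giving $\Vert g\Vert_{H^{s,k}}\Vert f\Vert_{H^{s-1,k-1}}$) or $g$ (the symmetric case). The index verifications mimic \eqref{apdx:eqn:pdt:algb} with all Sobolev exponents shifted down by $1$ on one side, which is exactly why the hypotheses are strengthened by $1$ derivative in both $s$ and $k$.

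For \eqref{apdx:eqn:pdt:infty}, since $f$ depends only on $r$, the horizontal Fourier multiplier $\Lambda^{s-l}$ commutes with multiplication by $\partial_r^{l'}f$, so $\Lambda^{s-l}\partial_r^l(fg)=\sum_{l'}\binom{l}{l'}\partial_r^{l'}f\cdot\Lambda^{s-l}\partial_r^{l-l'}g$. Pulling out $\Vert\partial_r^{l'}f\Vert_{L^\infty_r}\leq |f|_{W^{k,\infty}}$ and summing the remaining $L^2$ norms yields the estimate directly.

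The only genuinely delicate point will be the index bookkeeping (checking all the conditions $s\leq s_1,s_2$ and $s_1+s_2\geq s+t_0$ in \eqref{apdx:eqn:pdt:s1s2}, plus the anisotropic embedding hypotheses, in every Leibniz branch), together with separating the edge cases $l=0$, $l=k$, and $l'\in\{0,l\}$; no real analytic obstacle is expected.
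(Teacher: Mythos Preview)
The paper does not give a proof of this lemma; it simply cites \cite[Lemma A.4]{Fradin2024} and references therein. Your sketch is therefore not competing with anything in the paper, and the strategy you describe --- Leibniz in $r$, the horizontal product estimate \eqref{apdx:eqn:pdt:s1s2} pointwise in $r$, then the anisotropic trace embedding \eqref{apdx:eqn:embedding} to put one factor in $L^\infty_r$ --- is exactly the standard route used in the cited reference and in related works.

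One small point worth tightening: in your treatment of \eqref{apdx:eqn:pdt:algb}, the choice $s_1=s-l'$ for the factor placed in $L^\infty_r$ does not close when $l'=0$, since the embedding \eqref{apdx:eqn:embedding} costs half a horizontal derivative and would require control of $\|f\|_{H^{s+1/2,1}}$. The clean fix is either to take $s_1=s-l'-\tfrac12$ whenever $l'<l$ (so that the embedding lands back in $H^{s,k}$, and the constraint $s_1\geq s-l$ still holds), or, for the genuinely symmetric case $l=0$, to invoke the Kato--Ponce form $|fg|_{H^s}\lesssim |f|_{H^s}|g|_{L^\infty}+|f|_{L^\infty}|g|_{H^s}$ rather than \eqref{apdx:eqn:pdt:s1s2}, after which $\sup_r|g|_{L^\infty_x}\leq C\|g\|_{H^{t_0+1/2,1}}\leq C\|g\|_{H^{s,k}}$ closes the estimate. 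You already flag $l=0$ and $l'\in\{0,l\}$ as edge cases, so this is only a matter of writing the correct split there; the overall argument is sound.
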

\begin{proof}
See for instance~\cite[Lemma A.4]{Fradin2024} and references therein.
\end{proof}
We now state commutator estimate in $\R^d$. Note that in the case $s' = 0$,~\eqref{apdx:eqn:commutator:Rd:sprime} boils down to the classical commutator estimate that can be found, for instance, in~\cite[Proposition B.8]{Lannes2013}.
\begin{lemma}
Let $d \geq 1$, $t_0 > d/2$, $s \geq 0$.
\begin{itemize}
\item If $s_1 \geq s$ and $s_2 \geq s-1$ with $s_1+s_2 \geq s+t_0$, then there exists $C > 0$ such that
\begin{equation}
\label{apdx:eqn:commutator:Rd:classic}
|[\Lambda^s,f]g|_{L^2} \leq C | f |_{H^{s_1}} |g|_{H^{s_2}}.
\end{equation}
\item Let $s' \geq 0$. If moreover $s \geq t_0+1$, Then
\begin{equation}
\label{apdx:eqn:commutator:Rd:sprime}
|[\Lambda^s,f]g|_{H^{s'}} \leq C | f |_{H^{s+s'}} |g|_{H^{s+s'-1}}.
\end{equation}
\end{itemize}
\end{lemma}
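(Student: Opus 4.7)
The plan is to prove both estimates using Littlewood-Paley theory together with Bony's paradifferential decomposition, which is the standard framework for Kato--Ponce-type commutator estimates.

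\textbf{First inequality.} I would decompose the product via Bony's paraproducts,
$$ fg = T_f g + T_g f + R(f,g), $$
and then rewrite the commutator as
$$ [\Lambda^s,f]g = [\Lambda^s,T_f]g + \bigl(\Lambda^s T_g - T_{\Lambda^s g}\bigr)f + \bigl(\Lambda^s R(f,\cdot) - R(f,\Lambda^s \cdot)\bigr)g. $$
The first piece is a standard paradifferential commutator: since $T_f$ acts on high-frequency blocks of $g$, the commutator with $\Lambda^s$ exhibits a gain of one frequency on the $f$ slot, yielding an estimate controlled by $|f|_{H^{s_1}}|g|_{H^{s_2}}$ as soon as $s_1 \geq t_0$ and $s_2 \geq s-1$. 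The second piece behaves like a paraproduct of order $s-1$ acting on $f$, and using the Littlewood-Paley characterization of $H^s$ it is bounded by $|g|_{H^{s_2}}|f|_{H^{s_1}}$ provided $s_1 \geq s$ and $s_2 \geq t_0$. The remainder $R(f,g) - R(f, \Lambda^s \cdot)$ involves only diagonal interactions between dyadic blocks of comparable size, and summing the resulting series (which requires the balance condition $s_1+s_2 \geq s + t_0$) closes the bound.

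\textbf{Second inequality.} For the second bound, the key observation is the algebraic identity
$$ \Lambda^{s'}\bigl([\Lambda^s,f]g\bigr) = [\Lambda^{s+s'},f]g \;-\; [\Lambda^{s'},f]\,\Lambda^s g, $$
obtained by expanding $\Lambda^{s'}(f\Lambda^s g)$ in two different ways. Taking $L^2$-norms and applying the first inequality to each piece with the admissible parameters $(s_1,s_2) = (s+s', s+s'-1)$ in the first commutator, and $(s_1,s_2) = (s+s', s'-1)$ (shifted by $s$ via $|\Lambda^s g|_{H^{s'-1}} = |g|_{H^{s+s'-1}}$) in the second, produces exactly the claimed bound $|[\Lambda^s,f]g|_{H^{s'}} \leq C|f|_{H^{s+s'}}|g|_{H^{s+s'-1}}$. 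The balance condition $s_1+s_2 \geq s+s'+t_0$ required by the first inequality translates in both cases into $s+s' \geq t_0+1$, which is ensured by the hypothesis $s \geq t_0+1$ and $s'\geq 0$.

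\textbf{Main obstacle.} The genuine work lies in the first inequality, and specifically in bookkeeping the three paradifferential pieces so that the sharp index range $s_1 \geq s$, $s_2 \geq s-1$, $s_1+s_2\geq s+t_0$ is attained without losing a derivative. In particular, the case $s_2 = s-1$ (maximal loss on $g$) is the critical one and forces a careful summation of the high-high interactions in the remainder term $R$, where one must trade the Sobolev embedding $H^{t_0}\hookrightarrow L^\infty$ against the extra derivative carried by $\Lambda^s$. Once this is done, the second estimate follows by the purely algebraic reduction above, so essentially no new analysis is needed beyond the first step.
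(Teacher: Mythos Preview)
Your approach is correct and matches the paper's own proof. The paper declares the first inequality classical (citing \cite[Proposition~B.8]{Lannes2013}) without further argument, so your paraproduct sketch is more detailed than what the paper provides; for the second inequality, the paper uses exactly the same algebraic identity $\Lambda^{s'}([\Lambda^s,f]g) = [\Lambda^{s+s'},f]g - [\Lambda^{s'},f]\Lambda^s g$ (with a sign typo in the paper---your minus sign is the correct one) and then applies the first inequality to each piece with essentially the same choices of $(s_1,s_2)$.
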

\begin{proof}
The first inequality is classical. For the second one, we write
$$ \Lambda^{s'}( [\Lambda^s,f]g) = [\Lambda^{s+s'},f] g + [\Lambda^{s'},f] \Lambda^s g,$$
by definition of the commutator (see Subsection~\ref{subsection:notations}.
Using~\eqref{apdx:eqn:commutator:Rd:classic} with $s_1 = s+s'$ and $s_2 = s+s' -1$ to estimate the first term, and with $s_1 = s' + t_0+1$, $s_2 = s' -1$ for the second term, we get the result. 
\end{proof}
We now state commutator estimates in $H^{s,k}$ spaces.
\begin{lemma}[Commutator estimates] 
\label{apdx:commutator}
Let $d\in \N^*$, $t_0 > d/2$, $s >0$ and $k \in \N$ with $0 \leq k \leq s$.
 \begin{itemize}[label=\textbullet]
\item If $l\in \N$ with $0 \leq l \leq k$, then there exists $C > 0$ such that for any $f \in H^{s\vee(t_0+\frac32),k\vee2}$ and $g \in H^{(s-1)\vee(t_0+\frac12),2\vee(k\wedge(s-1))}$, we can write 
\begin{equation}
\label{apdx:eqn:commutator:gen}
\Vert [\Lambda^{s-l} \partial_r^l,f]g \Vert_{L^2} \leq C \Vert f \Vert_{H^{s\vee(t_0+\frac32),k\vee2}} \Vert g \Vert_{H^{(s-1)\vee(t_0+\frac12),2\vee(k\wedge(s-1))}}.
\end{equation}

\item If $s \geq  1$, then there exists $C > 0$ such that for any $f \in H^{s,0} \cap H^{t_0+\frac32,1}$, $g \in H^{s-1,0}$, we can write 
\begin{equation}
\label{apdx:eqn:commutator:horizontal}
\Vert [\dot{\Lambda}^{s},f]g \Vert_{L^2} \leq C \Vert \nabla_x f \Vert_{H^{t_0+\frac12,1}} \Vert g \Vert_{H^{(s-1)\vee(t_0+\frac12),0}}  + C \langle \Vert \nabla_x f \Vert_{H^{(s-1)\vee(t_0+\frac12),0}} \Vert g \Vert_{H^{t_0+\frac12,1}}\rangle_{s > t_0+1}.
\end{equation}

\item Let $k \geq 1$. There exists $C > 0$ such that if $f \in W^{k,\infty}_r$ and $g \in H^{s,k}$, then
\begin{equation}
\label{apdx:commutator:infty}
 \Vert [\Lambda^{s-k} \partial_r^k,f]g \Vert_{L^2} \leq C | f |_{W^{k,\infty}} \Vert g \Vert_{H^{s-1,k-1}}. 
\end{equation}
\end{itemize}
\end{lemma}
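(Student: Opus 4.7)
The three estimates all rest on the same basic decomposition, which separates the vertical and horizontal parts of the operator. Writing Leibniz's rule for $\partial_r^l$ gives
\begin{equation*}
[\Lambda^{s-l}\partial_r^l, f] g = \sum_{m=1}^{l} \binom{l}{m} \Lambda^{s-l}\!\bigl(\partial_r^m f \cdot \partial_r^{l-m} g\bigr) + [\Lambda^{s-l}, f]\, \partial_r^l g,
\end{equation*}
so that the problem is reduced to (i) a list of products in which at least one vertical derivative has already fallen on $f$, and (ii) a purely horizontal commutator on each fixed $r$-slice of the strip. The plan is to treat the sum by the anisotropic product estimates (\ref{apdx:eqn:pdt:algb}), (\ref{apdx:eqn:pdt:algb:tame}), (\ref{apdx:eqn:pdt:infty}) of Lemma~\ref{apdx:pdt}, and to treat the horizontal commutator by applying the standard $\mathbb{R}^d$-commutator estimate (\ref{apdx:eqn:commutator:Rd:classic}) slice by slice, integrating the result in $r$ and converting $L^\infty_r$ norms into anisotropic Sobolev norms through the trace/embedding estimate (\ref{apdx:eqn:embedding}).

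For (\ref{apdx:eqn:commutator:gen}), I would first dispose of the sum: each term $\Lambda^{s-l}(\partial_r^m f \cdot \partial_r^{l-m}g)$ with $1 \leq m \leq l$ involves $f$ differentiated at least once in $r$ and $g$ differentiated at most $l-1$ times in $r$, so applying (\ref{apdx:eqn:pdt:algb}) (or (\ref{apdx:eqn:pdt:algb:tame}) when the regularity budget is tight) after counting derivatives gives precisely the product of norms indicated. For the purely horizontal commutator $[\Lambda^{s-l},f]\partial_r^l g$, one applies (\ref{apdx:eqn:commutator:Rd:classic}) in $x$ for a.e. $r$, then takes $L^2_r$, using the embedding $H^{t_0+1/2,1} \hookrightarrow L^\infty_r H^{t_0}(\mathbb{R}^d)$ from (\ref{apdx:eqn:embedding}) to transfer an $L^\infty_r$ norm onto whichever factor is in the tame position. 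The case distinctions $s \vee (t_0+\tfrac32)$, $(s-1)\vee (t_0+\tfrac12)$, $k\vee 2$ and $2\vee(k\wedge(s-1))$ are book-keeping reflecting whether $f$ or $g$ plays the tame role, and which of $s$, $k$ already exceeds the Sobolev embedding threshold.

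The estimate (\ref{apdx:eqn:commutator:horizontal}) is easier because $\dot{\Lambda}^s = |D|\Lambda^{s-1}$ is a purely horizontal operator, so no Leibniz step is needed; one applies the standard commutator inequality on $\mathbb{R}^d$ fiberwise in $r$, integrates in $r$, and invokes (\ref{apdx:eqn:embedding}) to dominate the $L^\infty_r$ norms. The presence of $\nabla_x f$ on the right-hand side comes from pulling one horizontal derivative out of $\dot\Lambda^s$ before commuting, and the bracket $\langle\,\cdot\,\rangle_{s>t_0+1}$ records that this second configuration is only available in the non-endpoint range. Estimate (\ref{apdx:commutator:infty}) is the cleanest of the three: since $f\in W^{k,\infty}_r$ depends only on $r$, the horizontal Fourier multiplier $\Lambda^{s-k}$ commutes with multiplication by $f$, so the term $[\Lambda^{s-k},f]\partial_r^k g$ in the Leibniz decomposition vanishes identically; each remaining term with $m\geq 1$ is then bounded by $|\partial_r^m f|_{L^\infty}\,\Vert \Lambda^{s-k}\partial_r^{k-m}g\Vert_{L^2}\leq |f|_{W^{k,\infty}}\Vert g\Vert_{H^{s-1,k-1}}$, directly giving (\ref{apdx:commutator:infty}).

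The main technical obstacle is (\ref{apdx:eqn:commutator:gen}): keeping track of exactly which factor is carrying how many derivatives, both horizontal and vertical, and picking the right tame product estimate so that the final norms assemble into $H^{s\vee(t_0+3/2),k\vee 2}$ for $f$ and $H^{(s-1)\vee(t_0+1/2), 2\vee(k\wedge(s-1))}$ for $g$ without any loss. All other manipulations are essentially bookkeeping once the Leibniz decomposition and the fiberwise horizontal commutator estimate are in place; a detailed argument of exactly this kind (in the slightly different context of \cite{Fradin2024}) is available in the reference cited in the statement of Lemma~\ref{apdx:pdt}.
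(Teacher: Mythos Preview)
Your sketch is correct and follows precisely the standard strategy for such anisotropic commutator estimates: Leibniz expansion in $r$ to isolate a purely horizontal commutator, fiberwise application of~\eqref{apdx:eqn:commutator:Rd:classic} together with the embedding~\eqref{apdx:eqn:embedding}, and product estimates from Lemma~\ref{apdx:pdt} for the remaining terms. The paper itself does not give an argument but simply cites \cite[Lemma~A.5]{Fradin2024}, which carries out exactly the computation you outline; your observation for~\eqref{apdx:commutator:infty} that the horizontal commutator drops out when $f=f(r)$ is the right simplification.
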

\begin{proof}
See~\cite[Lemma A.5]{Fradin2024} and references therein.
\end{proof}

We now state a similar lemma on symmetric commutators.
\begin{lemma}[Symmetric commutator estimates]
\label{apdx:commutator_sym}
Let $d\in \N^*$, $t_0 > d/2$, $s \in \R$  with $s \geq t_0+\frac52$ and $k \in \N$ with $2 \leq k \leq s$.

\begin{itemize}[label=\textbullet]
\item 
If $l\in \N$ with $0 \leq l \leq k$. Then there exists $C > 0$ such that for any $f \in H^{s,k}$, $g \in H^{s-1,k\wedge(s-1)}$, we can write 
\begin{equation}
\label{apdx:eqn:commutator_sym:gen}
\Vert [\Lambda^{s-l} \partial_r^l;f,g] \Vert_{L^2} \leq C \Vert f \Vert_{H^{s-1,k\wedge(s-1)}} \Vert g \Vert_{H^{s-1,k\wedge(s-1)}}.
\end{equation}
\item 
For $s \in \R$ and $ s \geq 1$, $f \in H^{s-1,0} \cap H^{t_0+\frac32,1}$, $g \in H^{s-1,0} \cap H^{t_0+\frac32,1}$ we can write 
\begin{equation}
\label{apdx:eqn:commutator_sym:horizontal}
\Vert [\dot{\Lambda}^{s};f,g] \Vert_{L^2} \leq C \Vert f \Vert_{H^{s-1,0}} \Vert g \Vert_{H^{t_0+\frac32,1}} + C \Vert g \Vert_{H^{s-1,0}} \Vert f \Vert_{H^{t_0+\frac32,1}}.
\end{equation}
%\item 
%There exists a constant $C_0$ also depending on $\Vert f \Vert_{H^{t_0+\frac32,2}}$ and $\Vert g \Vert_{H^{t_0+\frac32,2}}$ such that
%\begin{equation}
%\label{apdx:eqn:commutator_sym:un}
%\Vert [\Lambda^{s-1} \partial_r;f,g] \Vert_{L^2} \leq C_0\left( \Vert f \Vert_{H^{s-1,1}}   + \Vert g \Vert_{H^{s-1,1}} \right).
%\end{equation}
\end{itemize}
\end{lemma}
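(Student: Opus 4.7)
The plan is to handle the two inequalities separately: \eqref{apdx:eqn:commutator_sym:horizontal} is the primary object and requires genuine paraproduct analysis in the horizontal variable, while \eqref{apdx:eqn:commutator_sym:gen} follows by Leibniz expansion in the vertical variable combined with the horizontal estimate and the product/commutator lemmas already at hand.

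\textbf{Step 1 (horizontal symmetric commutator).} Since $\dot{\Lambda}^s$ acts only in $x$, I would freeze $r \in [-1,0]$ and apply the classical Kato--Ponce--Vishik symmetric commutator estimate in $\R^d$:
\[
\Vert [\dot{\Lambda}^s; f, g](\cdot, r)\Vert_{L^2_x} \lesssim \Vert \nabla_x f(\cdot, r)\Vert_{L^\infty_x}\Vert g(\cdot, r)\Vert_{H^{s-1}_x} + \Vert \nabla_x g(\cdot, r)\Vert_{L^\infty_x}\Vert f(\cdot, r)\Vert_{H^{s-1}_x}.
\]
This is derived via Bony's decomposition $fg = T_f g + T_g f + R(f,g)$: the Coifman--Meyer paraproduct commutator $[\dot{\Lambda}^s, T_f]$ is of order $s-1$ (not $s$), and the symmetric subtraction in $[\dot{\Lambda}^s;f,g]$ exactly cancels the unbalanced high-low interactions that otherwise cost one derivative, so only the balanced remainder $\dot{\Lambda}^s R(f,g)$ and two order-$(s-1)$ paraproduct commutators remain. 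Squaring in $r$, integrating, and using the anisotropic embedding \eqref{apdx:eqn:embedding} to replace $\Vert \nabla_x f\Vert_{L^\infty_x L^\infty_r}$ by $C\Vert f\Vert_{H^{t_0+3/2,1}}$ (and symmetrically for $g$) gives \eqref{apdx:eqn:commutator_sym:horizontal}.

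\textbf{Step 2 (general symmetric commutator).} Using the Leibniz rule in $r$, I would expand
\[
[\Lambda^{s-l}\partial_r^l; f, g] = [\Lambda^{s-l}, f]\,\partial_r^l g + [\Lambda^{s-l}, g]\,\partial_r^l f + \sum_{m=1}^{l-1}\binom{l}{m}\Lambda^{s-l}\bigl(\partial_r^m f \cdot \partial_r^{l-m} g\bigr),
\]
noting that the $m=0$ and $m=l$ terms in the Leibniz expansion combine with the subtracted terms $f\Lambda^{s-l}\partial_r^l g$ and $g\Lambda^{s-l}\partial_r^l f$ to form the two horizontal commutators. The two commutator terms are bounded by the (non-symmetric) estimate \eqref{apdx:eqn:commutator:gen}, applied with appropriate $l$, which fits into $\Vert f\Vert_{H^{s-1,k\wedge(s-1)}}\Vert g\Vert_{H^{s-1,k\wedge(s-1)}}$ using $l \leq k$ and the Kato--Ponce-type distribution of derivatives. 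The middle sum is bounded by the anisotropic product estimate \eqref{apdx:eqn:pdt:algb}, since both factors $\partial_r^m f$ and $\partial_r^{l-m} g$ carry strictly fewer than $l \leq k$ vertical derivatives and therefore lie in $H^{s-1,(k-1)\wedge(s-1)}$; the hypothesis $k \geq 2$ ensures the remaining vertical regularity is at least $1$, allowing \eqref{apdx:eqn:pdt:algb} to apply; and $s \geq t_0 + 5/2$ provides the Sobolev threshold for both that product estimate and \eqref{apdx:eqn:commutator:gen}.

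\textbf{Main obstacle.} The only substantive step is Step 1: establishing the symmetric commutator estimate with the precise derivative count (one derivative gained over \eqref{apdx:eqn:commutator:Rd:classic}) requires a careful paraproduct analysis where the symmetric cancellation must be identified term-by-term. However, this is the classical Kato--Ponce--Vishik inequality, and the paper points to \cite[Lemma A.5]{Fradin2024} for the same construction in an identical anisotropic setting, so the content of the proof is essentially bookkeeping: identify the cancellations, apply the paraproduct bounds, integrate in $r$, and invoke \eqref{apdx:eqn:embedding} to trade an $L^\infty_r L^\infty_x$ bound for $H^{t_0+3/2,1}$.
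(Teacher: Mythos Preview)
The paper does not prove this lemma: it is stated in Appendix~A without a proof environment, immediately followed by the composition lemma. The only nearby indication of provenance is that the preceding (non-symmetric) commutator lemma cites \cite[Lemma A.5]{Fradin2024}, so the intended argument is presumably the one you outline.

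Your sketch is correct. Two small bookkeeping remarks on Step~2. First, the terms $[\Lambda^{s-l},f]\partial_r^l g$ are purely horizontal commutators, so the relevant input is \eqref{apdx:eqn:commutator:horizontal} (applied with $s$ replaced by $s-l$ and $g$ replaced by $\partial_r^l g$), not \eqref{apdx:eqn:commutator:gen}; this is harmless since $\partial_r^l g \in H^{s-1-l,0}$ is exactly the regularity that estimate needs. Second, at the endpoint $l=k$ one should check that $\partial_r^k g$ still carries the $H^{s-1-k,0}$ horizontal regularity required by the commutator bound; this follows from $g\in H^{s-1,k\wedge(s-1)}$ and $k\le s-1$ (if $k=s$ the operator is $\partial_r^s$ and only the Leibniz cross-terms survive, which are handled by \eqref{apdx:eqn:pdt:algb} as you say). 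With these adjustments the argument closes.
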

We conclude this appendix with composition estimates.
\begin{lemma}[Composition estimates in $H^{s,k}(S_r)$]
\label{apdx:composition}
Let $d \in \N^*$, $t_0 > d/2$. Let $s \in \R$ with $s \geq t_0+\frac12$, $k\in \N$ with $1 \leq k \leq s$, $M> 0$ . There exists $C > 0$ such that for any $F \in W^{s,k,\infty}(\R\times(0,1))$ with $F(0,\cdot) = 0$, and any $u \in H^{s,k}(S_r)$ such that $\Vert u \Vert_{H^{s,k}} \leq M$, the following holds. Define $F \circ u : (x,r) \mapsto F(u(x,r),r)$. Then
\begin{equation}
\label{apdx:eqn:composition:gen}
\Vert F \circ u \Vert_{H^{s,k}} \leq C | F |_{W^{s,k,\infty}(\R\times(0,1))} \Vert u \Vert_{H^{s,k}}.
\end{equation}
In the particular case $u = \epsilon h$ and $F(y) = \frac{y}{1+y}$, under assumption~\eqref{hyp:H}, we get
\begin{equation}
\label{apdx:eqn:composition:H}
\Vert \frac{\epsilon h}{\hb+\epsilon h} \Vert_{H^{s,k}} \leq \epsilon C \Vert h \Vert_{H^{s,k}}
\end{equation}
\end{lemma}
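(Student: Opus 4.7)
The proof follows the standard template for Moser-type composition estimates, adapted to the anisotropic space $H^{s,k}(S_r)$ whose norm is $\Vert \phi \Vert_{H^{s,k}} = \sum_{l=0}^k \Vert \Lambda^{s-l}\partial_r^l \phi \Vert_{L^2}$. So I reduce to estimating each mixed derivative $\Lambda^{s-l}\partial_r^l (F \circ u)$ for $0 \leq l \leq k$ separately, and the scheme is: expand $\partial_r^l$ via Faà di Bruno, distribute $\Lambda^{s-l}$ via the anisotropic product estimate~\eqref{apdx:eqn:pdt:algb}, and handle the smooth nonlinear factors by an $L^\infty$ bound coming from the assumption $F \in W^{s,k,\infty}$.

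For the base case $l=0$, I would exploit $F(0,\cdot)=0$ by writing $F(u(x,r),r) = u(x,r) \int_0^1 (\partial_y F)(tu,r)\,dt =: u \cdot G(u,r)$, with $\vert G \vert_{W^{s-1,k,\infty}} \lesssim \vert F \vert_{W^{s,k,\infty}}$. The product estimate~\eqref{apdx:eqn:pdt:algb} gives $\Vert u \cdot G(u,r) \Vert_{H^{s,k}} \lesssim \Vert u \Vert_{H^{s,k}} \Vert G(u,r) \Vert_{H^{s,k}}$, and the latter factor is controlled by an inductive Moser estimate for the smooth nonlinearity $G$ (no $G(0)=0$ hypothesis needed, since the loss of such a constraint only produces an additive constant which is absorbed thanks to $\Vert u \Vert_{H^{s,k}} \leq M$ and the embedding $H^{t_0+\frac12,1}\hookrightarrow L^\infty$ from Lemma~\ref{apdx:lemma:embedding}). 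For $1 \leq l \leq k$, Faà di Bruno gives
\begin{equation*}
\partial_r^l(F(u,r)) = \sum_{\alpha} c_\alpha\,(\partial_y^p \partial_r^q F)(u,r) \prod_{i} \partial_r^{\alpha_i} u,
\end{equation*}
the sum running over admissible multi-indices with $q + \sum_i \alpha_i = l$ and $p = \#\{i\}$. Each product is then estimated by applying $\Lambda^{s-l}$ and using~\eqref{apdx:eqn:pdt:algb} repeatedly to distribute the $s-l$ horizontal derivatives among the factors, while the $L^\infty$-type factors are handled via~\eqref{apdx:eqn:embedding}. The smooth nonlinearity $(\partial_y^p\partial_r^q F)(u,r)$ is itself bounded in $H^{s-l,k-l}$ by the composition estimate at lower order applied inductively to $\partial_y^p \partial_r^q F - (\partial_y^p\partial_r^q F)(0,r)$, while the $r$-dependent constant part is directly in $W^{s-l,k-l,\infty}$ and treated via~\eqref{apdx:eqn:pdt:infty}.

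The main technical obstacle is the nonlocal operator $\Lambda^{s-l}$ for non-integer $s$, for which a pointwise Leibniz/Faà di Bruno rule does not hold directly. I would circumvent this via Bony's paralinearization $\Lambda^{s-l}(fg) = T_f\Lambda^{s-l}g + T_g \Lambda^{s-l}f + R(f,g)$ and the standard tame estimates for paraproducts, working slice-by-slice in $r$ and then summing vertical derivatives; in the integer case $s\in\N$ — which is the regime actually needed elsewhere in the paper — the Leibniz rule bypasses this entirely and reduces the argument to a finite combinatorial sum. All the estimates are tame in $F$, i.e.\ linear in $\vert F \vert_{W^{s,k,\infty}}$ once $\Vert u \Vert_{H^{s,k}} \leq M$ is absorbed into the constant.

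For the particular case $F(y) = y/(1+y)$ applied to $u = \epsilon h$, note that $F$ is smooth away from $y=-1$, satisfies $F(0)=0$, and under~\eqref{hyp:H} the argument $\epsilon h$ takes values in a compact interval bounded away from $-\hb$, so every $W^{s,k,\infty}$-seminorm of $F$ is finite on the relevant range. The refinement to $\frac{\epsilon h}{\hb + \epsilon h}$ is obtained either by a product decomposition $\frac{\epsilon h}{\hb + \epsilon h} = \epsilon h \cdot \frac{1}{\hb+\epsilon h}$, where $\frac{1}{\hb+\epsilon h}$ is estimated in $H^{s,k}$ by composition with $y\mapsto 1/y$ (bounded below thanks to~\eqref{hyp:H}) followed by~\eqref{apdx:eqn:pdt:algb}, or more directly by upgrading the composition estimate to an $(x,r)$-dependent nonlinearity $\tilde F(y,x,r) = y/(\hb(x,r)+y)$ whose regularity in $(x,r)$ is controlled by~\eqref{hyp:Hb:reg}; either route yields the factor $\epsilon$ from $F(0)=0$ together with the bound $\epsilon C \Vert h \Vert_{H^{s,k}}$.
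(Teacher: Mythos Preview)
The paper does not give a proof of this lemma; it simply refers to \cite[Lemma A.5]{Duchene2022}. Your sketch follows the standard Moser-type argument (Fa\`a di Bruno in $r$, then tame product estimates in the horizontal variable, with the $F(0,\cdot)=0$ hypothesis used to factor out $u$), which is precisely the kind of proof one expects to find behind that citation, so your approach is correct and essentially the same as what the reference would contain. You also correctly noticed and handled the discrepancy in the ``particular case'': as stated, $F(y)=y/(1+y)$ applied to $u=\epsilon h$ gives $\epsilon h/(1+\epsilon h)$ rather than $\epsilon h/(\hb+\epsilon h)$, and your two fixes (product decomposition with $1/(\hb+\epsilon h)$, or an $(x,r)$-dependent nonlinearity $\tilde F(y,x,r)=y/(\hb(x,r)+y)$ controlled via \eqref{hyp:Hb:reg}) are both valid ways to recover \eqref{apdx:eqn:composition:H}.
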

\begin{proof}
See for instance~\cite[Lemma A.5]{Duchene2022}.
\end{proof}

\section*{Acknowledgments}
The author thanks his PhD supervisors Vincent Duchêne and David Lannes for their valuable advice. \\
This work was supported by the BOURGEONS project, grant ANR-23-CE40-0014-01 of the French National Research Agency (ANR).\\
This work is licensed under \href{https://creativecommons.org/licenses/by/4.0/}{ CC BY 4.0}.
\bibliographystyle{amsalpha}
\bibliography{../Seafile/refs/refs}
%\bibliography{../../Seafile/refs/refs}
\end{document}